\newcolumntype{P}[1]{>{\centering\arraybackslash}p{#1}}
\newtheorem{defn}{Definition}[section]
\newtheorem{thm}[defn]{Theorem}
\newtheorem{theorem}[defn]{Theorem}
\newtheorem{prop}[defn]{Proposition}
\newtheorem{lem}[defn]{Lemma}
\newtheorem{cor}[defn]{Corollary}
\theoremstyle{remark}
\newtheorem{remark}[defn]{Remark}
\newtheorem{question}[defn]{Question}
\newtheorem{conj}[defn]{Conjecture}
\numberwithin{equation}{section}
\numberwithin{figure}{section}
\newcommand{\bb}{\begin{equation}}
\newcommand{\ee}{\end{equation}}
\newcommand{\x}{x_i ^{(k)}}
\newcommand{\xj}{x_j ^{(k)}}
\newcommand{\half}{\frac{1}{2}}
\DeclareMathOperator{\arcsinh}{arcsinh}
\newcommand{\Hyp}[1]{{\mathbb{H}_{#1} }}
\newcommand{\origin}{\mathbf{o}}
\newcolumntype{?}{!{\vrule width 1.5pt }}
\newlength\savedwidth
\newcommand{\tightoverset}[2]{%
  \mathop{#2}\limits^{\vbox to -.5ex{\kern-0.75ex\hbox{$#1$}\vss}}}
\newcommand{\harp}[1]{\vec{#1}}
\newenvironment{proofof}[1]{{\medbreak\noindent \em Proof of #1.\enspace }}{\hfill\qed\medbreak}
\newcommand\sut{\,;\;}
\newcommand{\dfn}[1]{\textbf{\textit{#1}}}
\newcommand\mob{\textnormal{\textsf{M\"ob}}}  % M\"obius group
\newcommand\HH{\mathbb{H}} % hyperbolic space
\newcommand\uball{\mathbb{B}}  % unit ball
\newcommand\uhs{\mathbb{U}}  % upper halfspace
\newcommand\Unif{\mathrm{Unif}}  % uniform distribution
\newcommand\corona{\widetilde{ \partial \HH_{d} }}
\newcommand\Leb{\mathrm{Leb}}
\newcommand\sph{\partial \HH_d}
\newcommand\Sph{\mathbb{S}}  % unit sphere
\newcommand{\R}{\mathbb{R}}
\newcommand{\PP}{\mathbb{P}}
\newcommand{\EE}{\mathbb{E}}
\newcommand{\TT}{\mathbb{T}}
\newcommand{\DD}{\mathbb{D}}
\newcommand{\ud}{\mathrm{d}}
\newcommand{\bud}{\mathbf{d}}  % exponential separation
\newcommand{\ue}{\mathrm{e}}
\newcommand{\ui}{\mathrm{i}}
\newcommand\borel{\mathcal{B}}
\newcommand\cells{\mathcal{V}}
\newcommand\cellsl{\mathcal{V}^{(\lambda)}}
\newcommand\dcells{\mathcal{D}}
\newcommand\ocell{\mathcal{C}}
\newcommand\tcell{\mathcal{T}}
\newcommand\scell{\mathcal{T}}
\newcommand\ST{\,;\;}
\newcommand\rf{\mathrm{rf}}  % reflection in the plane 
\newcommand\Ste{\mathrm{Ste}}  % stereographic projection
\newcommand\Exp{\mathrm{Exp}}  % exponential distribution
\newcommand\Geom{\mathrm{Geom}}  % Geometric distribution
\newcommand\Pois{\mathrm{Pois}}  % Poisson distribution
\newcommand\Kh{\widehat K}  % modified hyperbolic Poisson kernel in UHS
\newcommand\bfz{\mathbf{0}}
\newcommand\RR{\mathbf{R}}  % radii in corona
\newcommand\XX{\mathbf{X}}  % PPP in hyperbolic space
\newcommand\XXl{\mathbf{X}^{(\lambda)}}  % PPP in hyperbolic space
\newcommand\YY{\mathbf{Y}}  % corresponding vertex process for trees
\newcommand\GKT{\mathcal{{L}}}  % the Laguerre tessellation of the $\beta'$-process of GKT
\newcommand\II[1]{\mathbf{1}_{#1}}  % indicator
\newcommand\DelVol{\mathsf{DV}}  % mean Euclidean volume of typical \scell_d cell
\newcommand\IDV{\mathsf{IDV}}  % mean hyperbolic volume of typical ideal Delaunay cell
\newcommand\JJ{\mathsf{j}}  % j integral of GKT
\newcommand\cc{\mathsf{c}}  % c of GKT
\newcommand\fNS{f_{\mathrm{NS}}}  % factor from PP to separation
\newcommand\fSV{f_{\mathrm{SV}}}  % factor from separation to IPVT
\newcommand\fVN{f_{\mathrm{VN}}}  % factor from IPVT to PP 
\newcommand\Vor{\mathrm{Vor}}  % Voronoi diagram
\newcommand\flr[1]{\lfloor #1 \rfloor}
\newcommand\Vol{\mathrm{Vol}}  % volume
\newcommand\Intn{\mathrm{I}}  % intensity
\newcommand\Cen{\mathrm{Cen}}  % center
\newcommand{\EEtyp}{\mathbb{E}^{\mathrm{typ}}}  % expectation with respect to typical point
\def\rlabel #1 #2{\begin{equation} \label{#1} #2 \end{equation}}
\def\rproof{\begin{proof}}
\def\Qed{\end{proof}}
\def\eqaln#1{\begin{align*} #1 \end{align*}}
\def\rcases#1{\begin{cases} #1 \end{cases}}
\tikzset{ 
reuse path/.code={\pgfsyssoftpath@setcurrentpath{#1}} 
} 
\tikzset{even odd clip/.code={\pgfseteorule}, 
protect/.code={ 
\clip[overlay,even odd clip,reuse path=#1] 
(current bounding box.south west) rectangle (current bounding box.north east)
%(-16383.99999pt,-16383.99999pt) rectangle (16383.99999pt,16383.99999pt)
; 
}} 
\title{\textsc{Ideal Poisson--Voronoi tessellations\\ on hyperbolic spaces}}
\author{
Matteo \textsc{D'Achille}\thanks{Universit\'e Paris-Saclay.\hfill  \href{mailto:nicolas.curien@gmail.com}{\texttt{matteo.dachille@universite-paris-saclay.fr}}},
Nicolas \textsc{Curien}\thanks{Universit\'e Paris-Saclay.\hfill  \href{mailto:nicolas.curien@gmail.com}{\texttt{nicolas.curien@gmail.com}}},
Nathana\"el \textsc{Enriquez}\thanks{Universit\'e Paris-Saclay and Ecole Normale Sup\'erieure.\hfill  \href{mailto:nathanael.enriquez@universite-paris-saclay.fr}{\texttt{nathanael.enriquez@universite-paris-saclay.fr}}},\\
Russell \textsc{Lyons}\thanks{Indiana University.\hfill  \href{mailto:rdlyons@indiana.edu}{\texttt{rdlyons@iu.edu}}},
and
Meltem \textsc{{\"U}nel}\thanks{Universit\'e Paris-Saclay.\hfill  \href{mailto:unel@lipn.univ-paris13.fr}{\texttt{unel@lipn.univ-paris13.fr}}} }
\date{19 March 2025}
\begin{document}
\maketitle

\begin{abstract} We study the limit in low intensity of Poisson--Voronoi tessellations in hyperbolic spaces $ \mathbb{H}_{d}$ for $d \geq 2$. In contrast to the Euclidean setting, a limiting nontrivial ideal tessellation $ \mathcal{V}_{d}$ appears as the intensity tends to $0$. The tessellation $  \mathcal{V}_{d}$ is  a natural, isometry-invariant decomposition of $ \mathbb{H}_{d}$ into countably many unbounded polytopes, each with a unique end.  We study its basic properties, in particular, the geometric features of its cells.\end{abstract}

\tableofcontents

\bigskip
\begin{quote}
%pub%``I said pig,'' replied Alice; ``and I wish you wouldn't keep appearing and vanishing so suddenly: you make one quite giddy.''
%pub%
%pub%``All right,'' said the Cat; and this time it 
[The Cheshire cat] vanished quite slowly, beginning with the end of the tail, and ending with the grin, which remained some time after the rest of it had gone.

``Well! I've often seen a cat without a grin,'' thought Alice; ``but a grin without a cat! It's the most curious thing I ever saw in my life!''

\smallskip
--- \textit{Alice's Adventures in Wonderland}, Lewis Carroll
\end{quote}

\section{Introduction}
Voronoi diagrams go back to Descartes. Their uses span the sciences, social sciences, and engineering.
Poisson--Voronoi tessellations are ubiquitous objects in stochastic geometry~{\cite[Chapter 4]{Moller1994}}. They have been used to model real-world networks (see~\cite[Section 19.3.2]{baccelli2010stochastic} for an introduction and \cite{bb6} for a concrete application) and have also been studied for their purely theoretical properties and their intrinsic beauty. 
The recent works \cite{bhupatiraju,BudzinskiCurienPetri} independently study properties of Poisson--Voronoi tessellations in hyperbolic space in the limit when the intensity of their nuclei tends to 0. Both these works also mention without proof the existence of a limiting tessellation. Here, we prove existence of this limit and study its fundamental properties. The existence of a nontrivial limit is surprising when one's intuition is grounded in Euclidean space.

\paragraph{Ideal Poisson--Voronoi tessellations.} Let $(E,d_{E},\mathbf{o},\mu)$ be an abstract, locally compact metric space equipped with an origin point, $\origin$, and an infinite Radon measure, $\mu$, such that the spheres centered at $\mathbf{o}$ have measure $0$. For $\lambda >0$, we consider a Poisson cloud of points $  \mathbf{X}^{(\lambda)}=(X^{{(\lambda)}}_1, X^{{(\lambda)}}_2, \ldots )$ with intensity $\lambda \cdot \mu$ (the points being ranked by their increasing distances to the origin of $E$). \ This point process enables us to define the \dfn{Voronoi diagram} $$ \mathrm{Vor}(\mathbf{X}^{(\lambda)})\coloneqq(C_1,C_2,\ldots)$$ relative to $\mathbf{X}^{(\lambda)}$, which is a tiling of $E$ where the tile (or cell) $C_{i}$ is made of the points of $E$ that are closer (in the weak sense) to $X_{i}^{(\lambda)}$ than to any other $X_{j}^{(\lambda)}$. When the underlying space $E$ is $\mathbb{R}^d$ equipped with Lebesgue measure (hence having polynomial volume growth), the diagrams $\mathrm{Vor}(\mathbf{X}^{(\lambda)})$ degenerate towards the trivial tiling $(E)$ when the intensity $\lambda$ tends to $0$.  However, if the underlying space has exponential growth, for example on $d$-regular metric trees with $d \geq 3$ equipped with the length measure, or on $d$-dimensional hyperbolic spaces $d \geq 2$ equipped with their volume measure, then $ \mathrm{Vor}(\mathbf{X}^{(\lambda)})$ may converge in distribution as $\lambda \to 0$ to a nontrivial random tiling, which we name the ideal Poisson--Voronoi tessellation. We refer to Section~\ref{sec:conv}, and in particular~\cref{lem.convvor}  for details of the convergence and to \cref{sec.basicfacts} for background on hyperbolic spaces.

\paragraph{Ideal Poisson--Voronoi tessellations on hyperbolic spaces $\mathbb{H}_d$.} In particular, our general convergence result, \cref{lem.convvor}, applies when $(E,d_{E},\origin, \mu)$ is the hyperbolic space $ \mathbb{H}_d$ equipped with its volume measure and yields:
\begin{theorem}[\textsc{Convergence of low-intensity tessellations}]\label{thm.convhyp} Let $ \mathbf{X}_{d}^{{(\lambda)}}$ be a Poisson point process with intensity $\lambda>0$ on the $d$-dimensional hyperbolic space $ \mathbb{H}_{d}$ with $d \geq 2$. Then we have the convergence in law 
$$ \mathrm{Vor}\big(\mathbf{X}_{d}^{{(\lambda)}}\big) \Rightarrow{} \mathcal{V}_{d}.$$
The limiting tessellation $\mathcal{V}_{d}$ is called the \dfn{ideal Poisson--Voronoi tessellation} of\/ $ \mathbb{H}_{d}$; see \cref{fig:voronoiH}.
\end{theorem}

\begin{figure}[!h]
 \begin{center}
 \includegraphics[height=5cm]{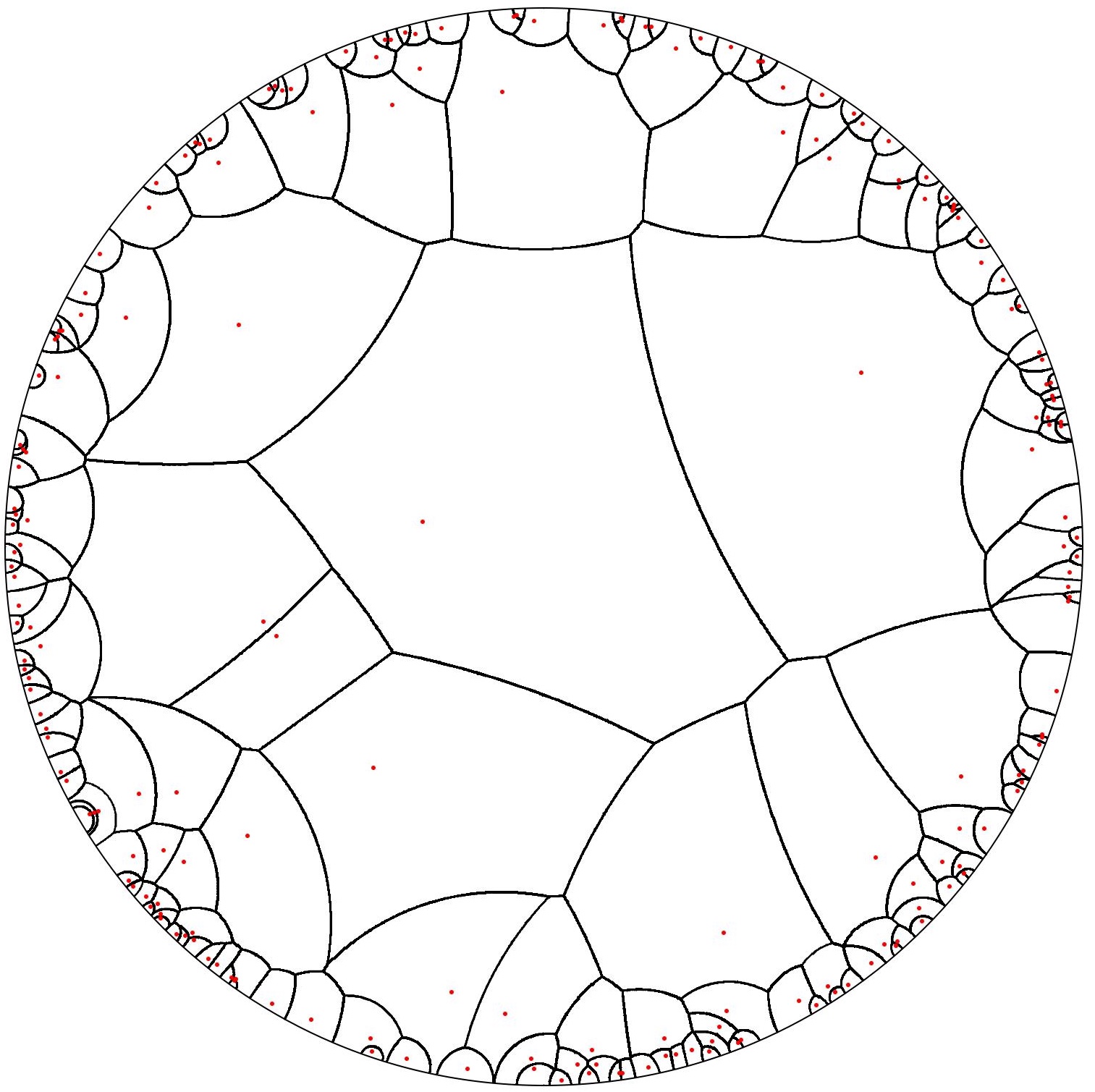}
  \includegraphics[height=5cm]{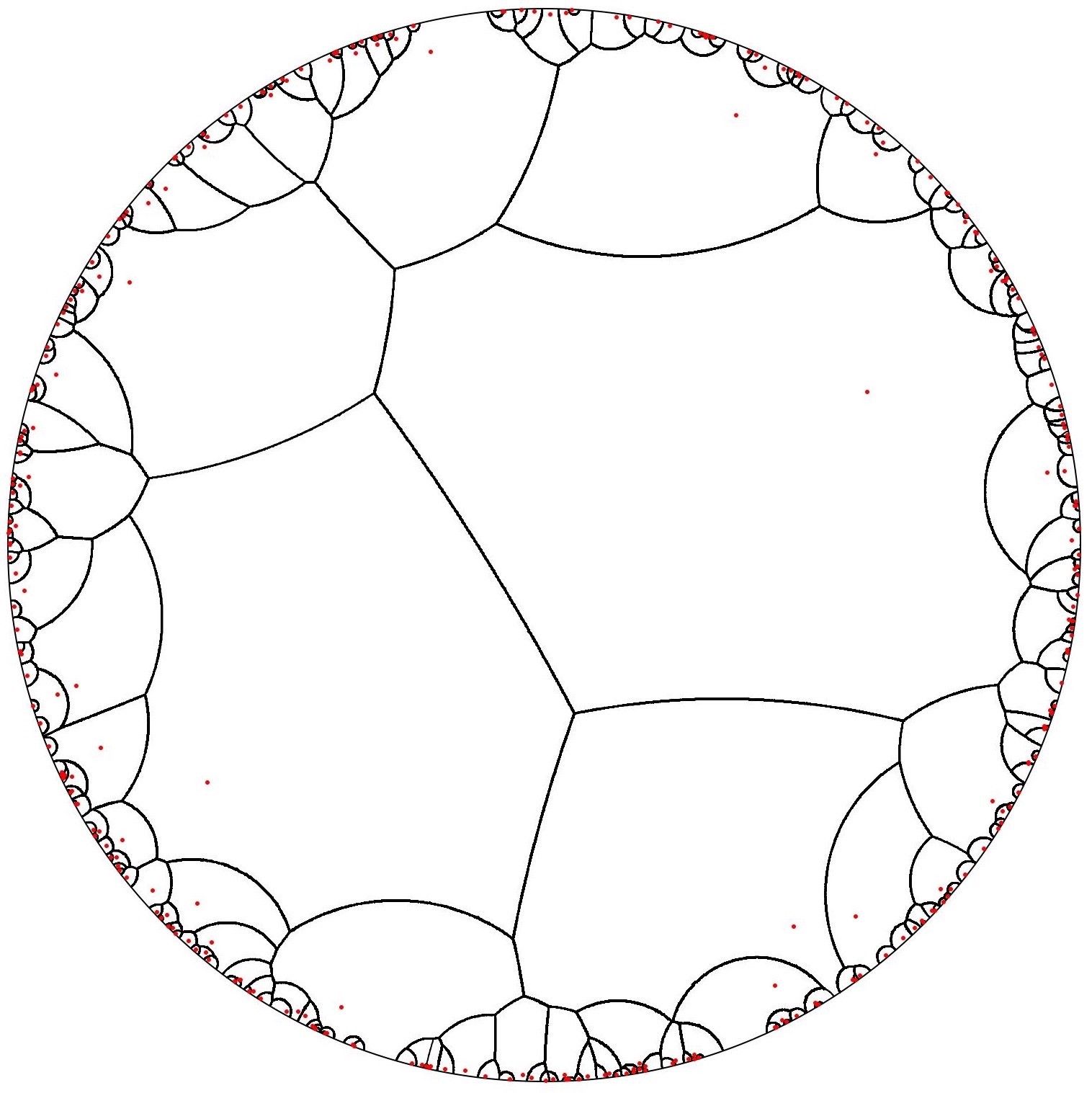}
   \includegraphics[height=5cm]{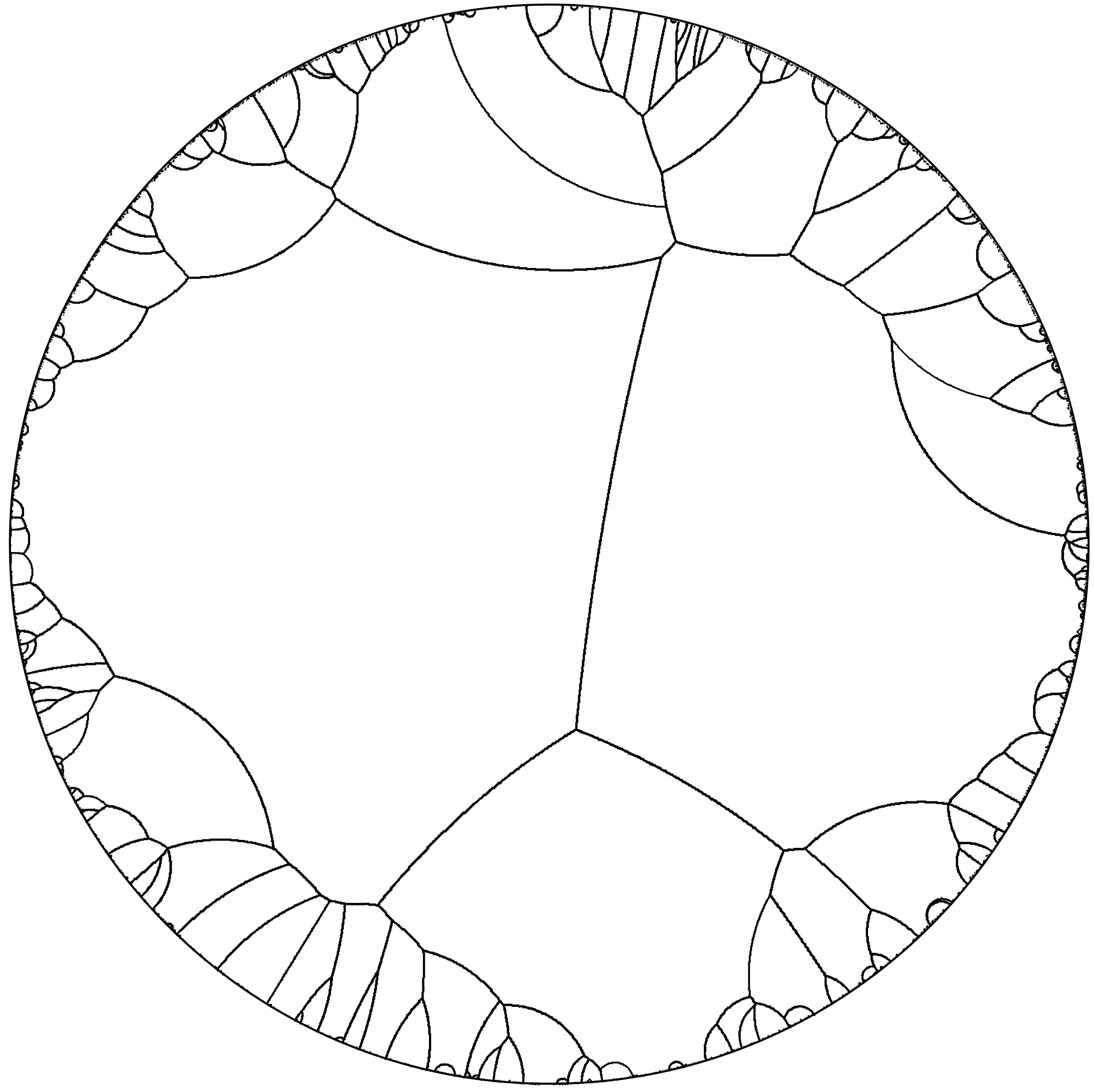}
 \caption{ From  \cite{BudzinskiCurienPetri}. Left to right: Poisson--Voronoi tessellations of the hyperbolic plane (in the unit disk model) with decreasing intensity. Their limit (on the right) is $\mathcal{V}_2$, the  \emph{ideal Poisson--Voronoi tessellation} of the hyperbolic plane.}\label{fig:voronoiH}
 \end{center}
 \end{figure}

By \dfn{tessellation} in $\R^d$ or $\HH_d$, we mean a locally finite collection of nonempty, closed sets (called ``tiles" or ``cells") that are (geodesically) convex, have nonempty and pairwise disjoint interiors, and that cover the space. The cells are then (possibly unbounded) polytopes, i.e., each cell is the convex hull of a locally finite set of points (its vertices)---equivalently, each cell is the intersection of a locally finite set of closed half-spaces (see, e.g., \cite[Lemma 10.1.1]{SchneiderWeil} for tessellations in $\R^d$; the same follows for tessellations in $\HH_d$ by using the Beltrami--Klein model, where every complete, totally geodesic submanifold of dimension $k$ is the intersection of the Euclidean unit ball $\uball_d$ with an affine Euclidean plane of dimension $k$ \cite[Section 6.3]{ratcliffe}). The faces of a cell are the intersections of the cells with one or more of its supporting (geodesic) hyperplanes. In our case, the intersection of two cells will always be a face of each cell, or empty; such a tessellation is called \dfn{face-to-face}. Furthermore, in our case, a $k$-face will always be the intersection of $d-k+1$ cells, but no fewer and no more; such a tessellation is called \dfn{normal}.

The limiting tessellation $\mathcal{V}_{d}$ can be constructed from a Poisson process of points on the boundary of the $d$-dimensional hyperbolic space, $\partial \mathbb{H}_d $, cross the real numbers, the second coordinates of which we call \dfn{delays}. Equivalently, this product is the space of horospheres in $\HH_d$.  In coordinates, we provide the following explicit descriptions of $\cells_d$ as a multiplicatively weighted Voronoi diagram, where $\Leb$ denotes Lebesgue measure on the appropriate spaces, $\Sph_{d-1} \coloneqq \partial \uball_d$, and $| \,\cdot\, |$ denotes the Euclidean norm:

\begin{thm}[\textsc{Coordinate description of $ \mathcal{V}_d$}]\label{t.weighted} For the unit ball model of\/ $\HH_d$, let $N$ be a Poisson point process with intensity $\Unif \otimes \Leb$ on $\Sph_{d-1} \times \R_+$, while for the upper half-space model of\/ $\HH_d$, let $N$ be a Poisson point process with intensity $\Leb \otimes \Leb$ on $\R^{d-1} \times \R_+$. In both models, the ideal Poisson--Voronoi tessellation $\cells_d$ consists of the cells $\bigl\{C(\theta, r) \ST (\theta, r) \in N\bigr\}$ defined by
\rlabel e.coordCells
{C(\theta, r) 
\coloneqq
\bigl\{z \ST r^{1/2(d-1)} |z - \theta| \le s^{1/2(d-1)} |z - \psi| \mbox{ for all } (\psi, s) \in N\bigr\}.}
\end{thm}

This enables us to study the stochastic properties of $ \mathcal{V}_{d}$ directly. In particular, we prove that $\mathcal{V}_{d}$ is a natural cell decomposition of $ \Hyp{d}$ in the following sense:
\begin{thm}[\textsc{Tiles have one end}] \label{thm:decomposition} Almost surely,
\begin{itemize}
\item $\cells_d$ is a tessellation of\/ $\HH_d$;
\item each tile of\/  $ \mathcal{V}_{d}$ is unbounded with an infinite number of bounded faces and with a unique limit point on the ideal boundary of\/ $\HH_d$, which we refer to as its \dfn{end};  
\item $\cells_d$ is face-to-face and normal; and
\item the law of\/ $ \mathcal{V}_{d}$ is invariant under every isometry of\/ $ \mathbb{H}_d$.
\end{itemize}
In particular,  in dimension $2$, the union $ \partial \mathcal{V}_{2}$ of the boundaries of the tiles  is a random embedding in $ \mathbb{H}_{2}$ of the 3-regular tree with geodesic edges whose law is invariant under isometries of the hyperbolic plane.
\end{thm}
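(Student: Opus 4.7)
The plan is to realise $\mathcal{V}_d$ as the Voronoi tessellation of an isometry-invariant Poisson process $\Pi = \{(\xi_i, \rho_i)\}$ on $\partial \mathbb{H}_d \times \mathbb{R}$ (ideal centres with real delays), whose tiles are
\[ C_i = \{x \in \mathbb{H}_d : b_{\xi_i}(x) + \rho_i \leq b_{\xi_j}(x) + \rho_j \text{ for every } j\}, \]
where $b_\xi$ denotes the Busemann function at $\xi$. Almost every assertion of the theorem then follows from geometric facts about Busemann functions combined with standard Poisson arguments. Disjoint interiors are immediate, since $C_i \cap C_j$ lies in the bisector between $i$ and $j$, a lower-dimensional set.

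For the convex polytopal structure and the unique end I would first show that the bisector $\{b_{\xi_i}(x) - b_{\xi_j}(x) = \rho_j - \rho_i\}$ is a totally geodesic hyperplane of $\mathbb{H}_d$; this is transparent in the upper half-space model with $\xi_i$ sent to $\infty$, where $b_{\xi_i}(x) = -\log x_d$ and the bisector is a Euclidean hemisphere orthogonal to $\partial \mathbb{H}_d$. Hence $C_i$ is an intersection of closed half-spaces, so convex. In the same coordinates, for any fixed competitor $(\xi_j, \rho_j)$ the defining inequality holds on every sufficiently deep horoball at $\xi_i$; combined with the local finiteness below, this produces an actual horoball at $\xi_i$ contained in $C_i$, proving that $\xi_i$ is an end. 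Uniqueness follows by contradiction: if $\xi \neq \xi_i$ were another limit point of $C_i$ on $\partial \mathbb{H}_d$, density of $\Pi$ provides a Poisson centre $\xi_j$ arbitrarily close to $\xi$ for which $b_{\xi_j}(x) \to -\infty$ along geodesic rays into $\xi$ staying in $C_i$, whereas $b_{\xi_i}$ remains bounded below, contradicting the defining inequality. The facets are infinitely many because Poisson points accumulate on $\partial \mathbb{H}_d$ near $\xi_i$ and each sufficiently close one contributes a non-redundant bisector.

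For local finiteness, the bound $d+1$, and isometry invariance: fixing an origin $o$, the real values $\{b_{\xi_i}(o) + \rho_i\}$ form a Poisson process on $\mathbb{R}$ whose intensity (pushforward of $\Pi$'s intensity) is integrable on any half-line $(-\infty, M]$, so only finitely many $i$ satisfy $b_{\xi_i}(o) + \rho_i \leq M$; since $|b_{\xi_i}(x) - b_{\xi_i}(o)| \leq d(x,o)$, only finitely many tiles meet any fixed compact set. The bound of $d+1$ concurrent tiles is a standard general-position statement: $k$ tiles meeting at $x$ impose $k-1$ Busemann equalities, and generically $d+2$ such equalities have no common solution, a zero-measure condition on $\Pi$. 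Isometry invariance is inherited from the invariance of the intensity of $\Pi$: isometries of $\mathbb{H}_d$ act conformally on $\partial \mathbb{H}_d$ with a Jacobian exactly absorbed by a translation in the delay coordinate $\rho$.

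In dimension two, the previous points give that each interior vertex of $\partial \mathcal{V}_2$ lies on exactly three tiles and each edge is a geodesic segment of $\mathbb{H}_2$. To finish the tree statement I would rule out cycles via the Jordan curve theorem: a finite cycle in $\partial \mathcal{V}_2$ would bound a topological disk $D \subset \mathbb{H}_2$ of finite area, hence a finite union of tiles; but every tile contains a horoball and so has infinite area, a contradiction. The hard part of the whole program is the combined control of local finiteness and the horoball-in-a-tile claim: both rest on a quantitative estimate of the density of bisectors through a compact region, and this is the single step in which the Möbius geometry of $\mathbb{H}_d$ genuinely enters — everything else is elementary convex geometry or soft Poisson arguments.
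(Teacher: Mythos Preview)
Your overall framework matches the paper's, and the arguments for convexity, geodesic bisectors, local finiteness, the $d{+}1$ bound, isometry invariance, and the tree structure in $d=2$ are essentially the same as (or easily completed along the lines of) what the paper does. The genuine gap is in the one-ended claim, and it is twofold.

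First, the assertion that $C_i$ contains a horoball at $\xi_i$ is false. Local finiteness concerns compact subsets of $\mathbb{H}_d$, whereas a horoball is unbounded; infinitely many bisectors do meet every horoball at $\xi_i$. Concretely, in the upper half-space with $\xi_i$ at $\infty$, the cell is the epigraph of a stationary height field $\mathcal{H}$ over $\mathbb{R}^{d-1}$ (cf.\ Proposition~\ref{prop.hscpl}), and a stationary field with nondegenerate marginal is a.s.\ unbounded, so no slab $\{x_d>h\}$ lies in $C_i$. Second, the uniqueness argument breaks: for any $\xi_j\neq\xi$ one has $b_{\xi_j}(x)\to+\infty$, not $-\infty$, along geodesic rays into $\xi$ (send $\xi$ to $\infty$ in $\mathbb{U}_d$; then $b_{\xi_j}(x)\sim\log x_d$). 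Angular density of nuclei near $\xi$ is therefore not enough---a nearby $\xi_j$ with large delay $\rho_j$ gives a tiny hemisphere that need not cover $\xi$, so one must simultaneously control angles \emph{and} delays. This is exactly where the paper's argument does real work: after the explicit deposition description of $\mathcal{C}_d$ (Theorem~\ref{thm.superposition}) with hemisphere intensity $\propto \rho^{1-2d}\,\mathrm{d}\rho\,\mathrm{d}x$, one shows that the probability a fixed $\varepsilon$-ball in $\mathbb{R}^{d-1}$ is left uncovered is at most $\exp(-c\,\varepsilon^{-(d-1)})$, and a union bound over an $\varepsilon$-net gives a.s.\ covering of all of $\mathbb{R}^{d-1}$; hence $\xi_i$ is the only boundary accumulation point of $C_i$. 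Your sketch correctly anticipates that a quantitative density estimate is the crux, but the specific mechanism you propose does not establish it.
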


The last bullet point of Theorem~\ref{thm:decomposition} follows from the M{\"o}bius invariance of the hyperbolic measure, hence of the law of the Poisson cloud $ \mathbf{X}_{d}^{{(\lambda)}}$ for each $\lambda >0$. The third point is also classical in stochastic geometry and is in particular a well-known fact for standard Poisson--Voronoi tessellations in Euclidean space; see \hbox{\cite[Theorems 10.2.1 and 10.2.3]{SchneiderWeil}}. 

\begin{figure}[!hbtp]
\centering
\begin{minipage}{.45\textwidth}
\begin{overpic}[width=\textwidth]{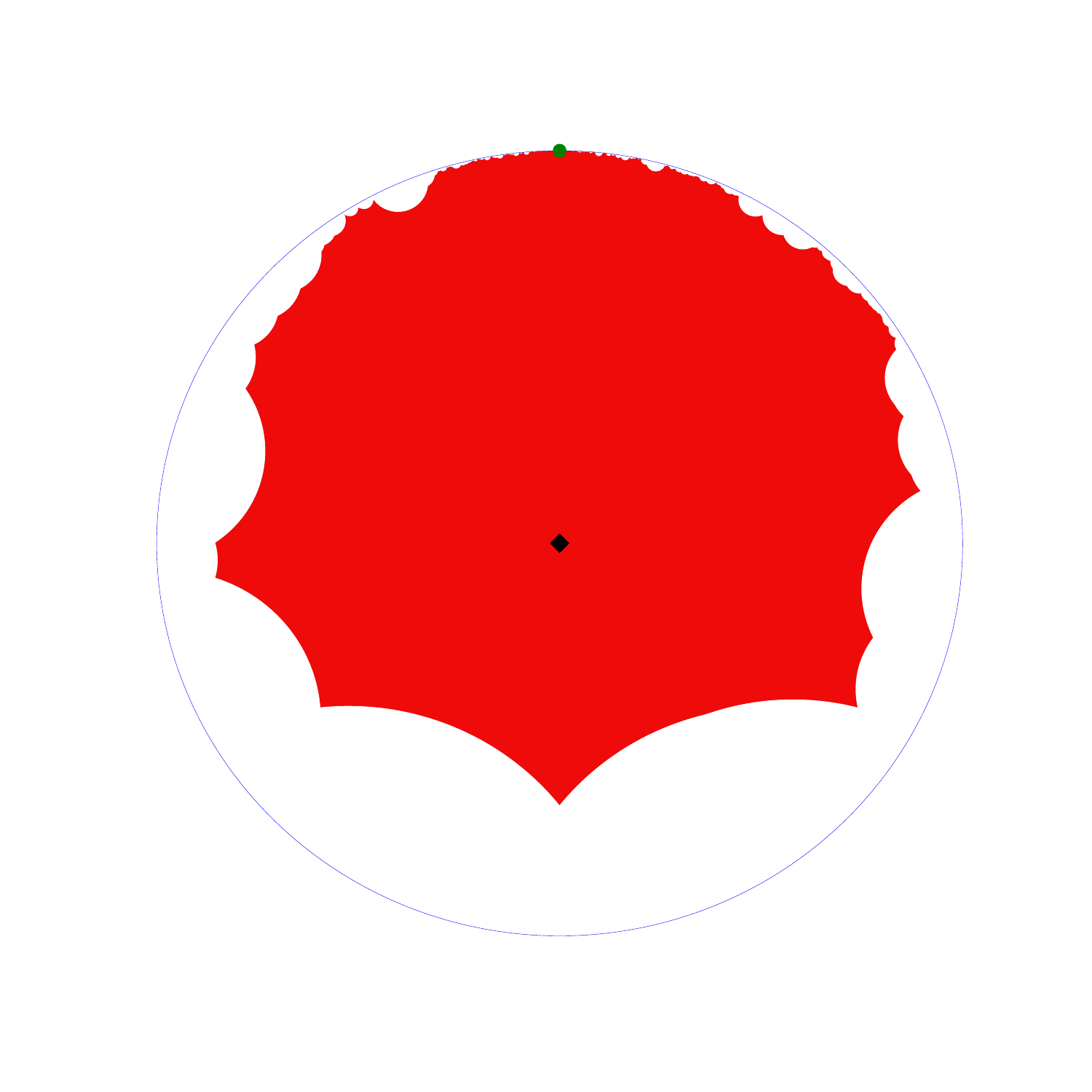}
   \put (45,45) {$\origin$}
\end{overpic}   
     \end{minipage}
     \hspace{12pt}
     \begin{minipage}{.47\textwidth}
\begin{overpic}[width=\textwidth]{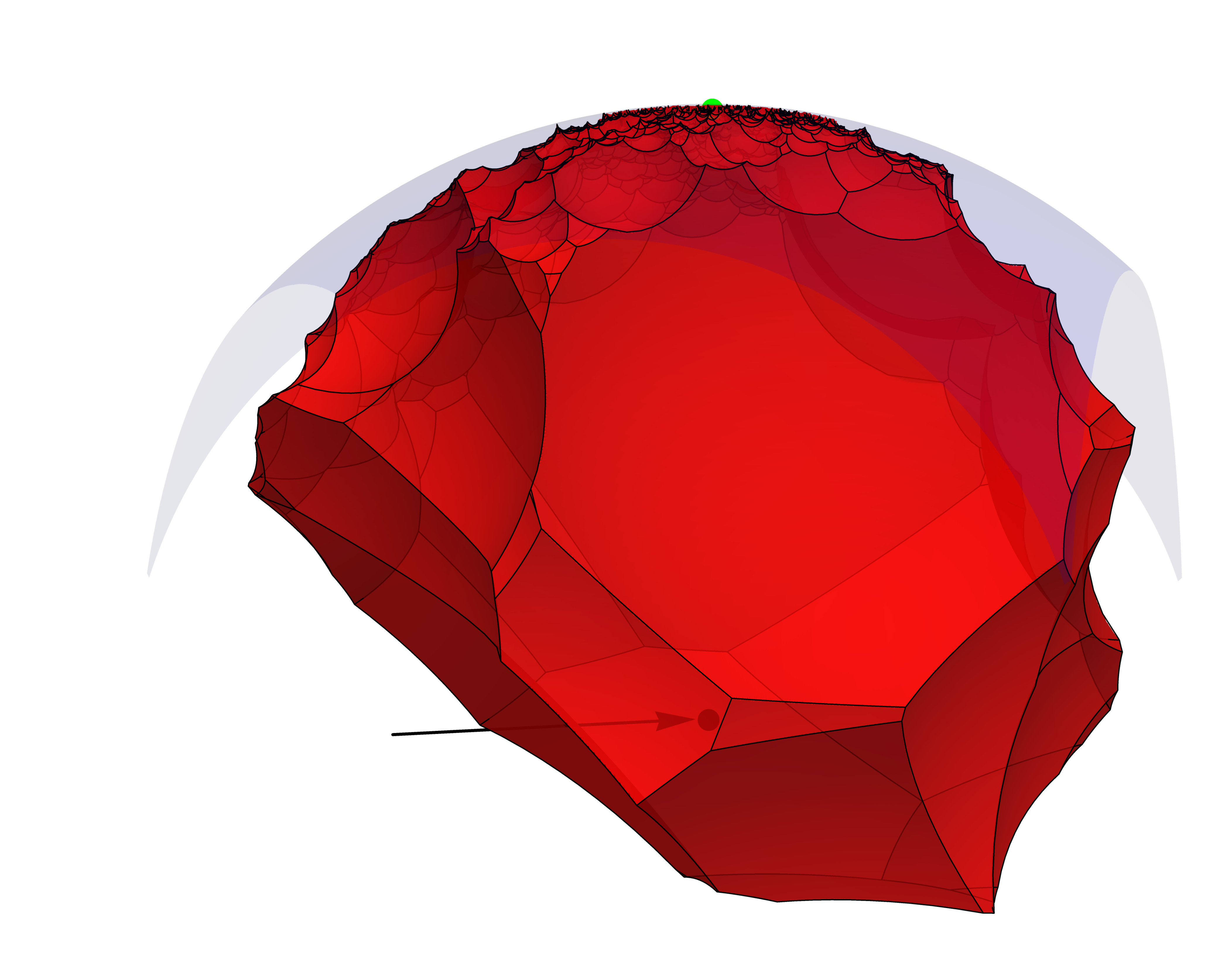}
   \put (26,18) {$\origin$}
\end{overpic}   
     \end{minipage}
    \caption{Simulations of the cell of $ \mathcal{V}_d$ containing the origin in the Poincar\'e ball model of $\mathbb{H}_d$ for $d=2$ (left) and $d=3$ (right), obtained by sampling from \eqref{eq.intm} conditionally on the median value $\frac{(d-1)}{c_{d}}\log{2}$ of $\mathcal{E}_{d}$. }
    \label{fig.cell_B}
\end{figure}

We denote by $\mathcal{C}_d$ the cell of $\mathcal{V}_{d}$ containing the origin of $ \mathbb{H}_d$, which, by the M{\"o}bius invariance of $\mathcal{V}_{d}$, has the same law as the cell of any other fixed point. We refer to $\ocell_d$ as the \dfn{zero cell}; see \Cref{fig.cell_B}. Through the zero cell, we investigate the fine properties of the tiles of $\mathcal{V}_{d}$.
As we stated in Theorem \ref{thm:decomposition}, the cell $ \mathcal{C}_d$ almost surely has a unique end $\in \partial \mathbb{H}_{d}$, and once we view $ \mathcal{C}_d$ in the upper half-space model of $ \mathbb{H}_{d}$ with its unique end sent to $\infty$ and the origin sent to $(0, 0, \ldots, 0, 1)$, its law can be described in a surprisingly simple and appealing way using a \dfn{deposition model}:  Let $ \mathcal{E}_{d}$ be a random variable with law ${\rm{Exp}}\big(\frac{c_{d}}{d-1}\big)$, where 
\rlabel e.defc_d
{c_d\coloneqq2^{2-d}\frac{\pi^{\frac{d}{2}}}{\Gamma\big(\frac{d}{2}\big)}
}
is $2^{1-d}$ times the Euclidean volume of $\Sph_{d-1}$. Conditionally on $ \mathcal{E}_{d}$, let  $\Pi_d$ be a Poisson cloud of hemispheres in $ \mathbb{R}^{d-1} \times \mathbb{R}_+$ with centers $x \in \mathbb{R}^{d-1}$ and radii $\rho >0$ having intensity 
\begin{equation}\label{eq.intm}
 2 \cdot \mathcal{E}_{d} \cdot \ud x \ \rho^{1-2d}\mathrm{d}\rho \, \mathbf{1}_{ \rho \leq \sqrt{1+|x|^{2}}}\; .
 \end{equation}

\begin{thm}[\textsc{Description of $ \mathcal{C}_d$}]\label{thm.superposition} The law of $ \mathcal{C}_d$ is given by the complement of all open hemispheres whose centers and radii are given by $ \Pi_d$; see \Cref{fig.cell_A}.
\end{thm}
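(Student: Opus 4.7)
The plan is to unfold the construction of $\mathcal{V}_d$ via its boundary-delay Poisson process, recalled in Section~\ref{sec:conv}, and to read everything off in the upper half-space model in which the unique end of $\mathcal{C}_d$ is sent to $\infty$ and $\origin$ to $(0,\ldots,0,1)$. In that description, $\mathcal{V}_d$ is built from a Poisson point process $\Phi$ on $\corona = \sph\times\R$ with an isometry-invariant intensity of the form $c_\ast\,\ud\hm(\xi)\,\ue^{(d-1)t}\,\ud t$, and each atom $(\xi,t)\in\Phi$ indexes the tile $\{p : b_\xi(p)+t \le b_{\xi'}(p)+t'\text{ for all }(\xi',t')\in\Phi\}$, with $b_\xi$ the Busemann function normalized by $b_\xi(\origin)=0$. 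Because all Busemann functions vanish at $\origin$, the origin belongs to the tile indexed by the atom $(\xi^\star,t^\star)$ achieving the minimal delay. M\"obius invariance of $\Phi$ allows us to apply an isometry sending $\xi^\star\to\infty$ and $\origin\to(0,\ldots,0,1)$ without altering the law of $\mathcal{C}_d$.

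I would then exploit the Poisson structure in two steps. First, the total $\Phi$-mass below level $T$ equals $\frac{c_\ast\hm(\sph)}{d-1}\ue^{(d-1)T}$, hence the rescaled quantity $\mathcal{E}_d:=\frac{c_\ast\hm(\sph)}{c_d}\ue^{(d-1)t^\star}$ is $\mathrm{Exp}(c_d/(d-1))$-distributed; using $\hm(\sph)=2\pi^{d/2}/\Gamma(d/2)$ and the identity $c_d=\hm(\sph)/2^{d-1}$ one recovers the announced rate. Second, by Slivnyak--Mecke, conditionally on $t^\star$ the remaining atoms form an independent Poisson process on $\sph\times(t^\star,\infty)$ with the same intensity. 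An explicit computation using $b_\infty(x,y)=-\log y$ and the origin-normalized $b_z(x,y)=\log\frac{|x-z|^2+y^2}{y(1+|z|^2)}$ at a finite boundary point $z\in\R^{d-1}$ then shows that the bisector $\{b_\infty(p)+t^\star=b_z(p)+t_z\}$ is exactly the Euclidean hemisphere
\[
\bigl\{(x,y) : |x-z|^2+y^2 = (1+|z|^2)\,\ue^{t^\star-t_z},\ y>0\bigr\},
\]
centered at $z$ of radius $\rho=\sqrt{(1+|z|^2)\ue^{t^\star-t_z}}$. Consequently $\mathcal{C}_d$ coincides with the complement in the upper half-space of the union of the corresponding open half-balls, and the constraint $t_z\ge t^\star$ translates into $\rho\le\sqrt{1+|z|^2}$, i.e.\ the half-ball does not contain $\origin$.

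It remains to push the intensity through the change of variables $(z,t_z)\mapsto(z,\rho)$. The Cayley transform from the Poincar\'e ball to the upper half-space sends the uniform measure on $S^{d-1}$ to $\ud\hm(z)=2^{d-1}(1+|z|^2)^{1-d}\ud z$ on $\R^{d-1}$, while $\ud t_z=-2\rho^{-1}\ud\rho$ and $\ue^{(d-1)t_z}=\ue^{(d-1)t^\star}(1+|z|^2)^{d-1}\rho^{-2(d-1)}$ at fixed $z$. The $(1+|z|^2)^{d-1}$ factors cancel, and combined with the identity $c_\ast\cdot 2^{d-1}\ue^{(d-1)t^\star}=\mathcal{E}_d$ from the previous step, the intensity of $\Phi$ in $(z,\rho)$ coordinates becomes $2\mathcal{E}_d\,\ud z\,\rho^{1-2d}\ud\rho\,\mathbf{1}_{\rho\le\sqrt{1+|z|^2}}$, matching~\eqref{eq.intm} exactly. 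The main subtleties are the Palm-type conditioning on the atom at $\infty$ being the minimizer (a direct Slivnyak--Mecke application to $\Phi$) and the passage from the intersection-of-half-spaces definition of $\mathcal{C}_d$ to the complement-of-half-balls description, which relies on the local finiteness of $\cells_d$ from Theorem~\ref{thm:decomposition}.
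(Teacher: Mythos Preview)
Your argument is correct and follows essentially the same route as the paper's proof: condition on the ideal nucleus of smallest delay, use the Poisson restriction property to describe the remaining nuclei, identify each bisector with a Euclidean hemisphere in $\mathbb{U}_d$, and push the intensity forward under $(\xi,t)\mapsto(z,\rho)$. The paper carries this out in the corona coordinates $(\theta,R)$ with $R=\frac{c_d}{d-1}\ue^{(d-1)D}$ (so the intensity is $\mathrm{Unif}\otimes\mathrm{Leb}_{\R_+}$) and invokes Proposition~\ref{prop:bissec} and Lemma~\ref{lem.stereo} for the hemisphere radius and the stereographic pushforward, whereas you stay in delay coordinates and compute the bisector directly from the Busemann functions $b_\infty,b_z$; this is a cosmetic reparametrization rather than a different idea. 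One small terminological quibble: what you call ``Slivnyak--Mecke'' for the conditioning on $(\xi^\star,t^\star)$ is really just the elementary fact that, given the first record of a Poisson process, the remaining points form an independent Poisson process above that level---the paper simply cites ``standard properties of Poisson processes'' here.
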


\begin{figure}[!h]
 \begin{center}
\includegraphics[height=0.3\linewidth]{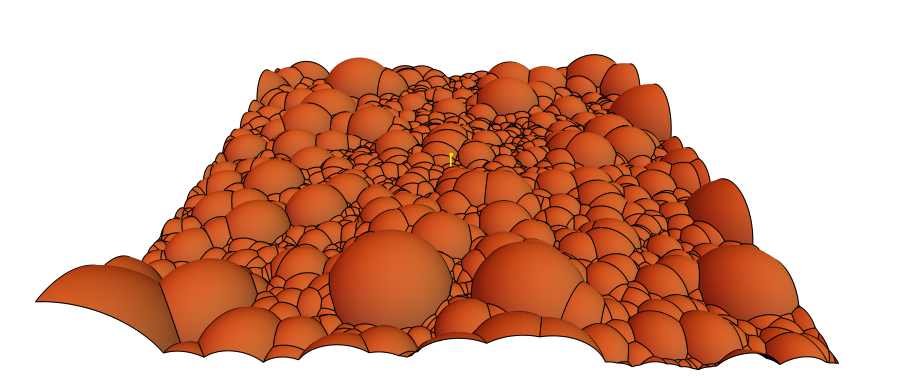}

\includegraphics[width=.7\linewidth]{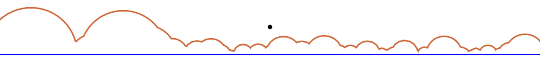}
 \caption{Representation of a finite portion of $\partial\mathcal{C}_3$ (top) and $\partial\mathcal{C}_{2}$ (bottom) in the upper half-space models in dimensions $3$ and $2$ (conditional as in \Cref{fig.cell_B}). In both cases, the portions go from $-10$ to $10$ in the coordinate directions on the ideal boundary. The origin is shown on top as a yellow dot with a vertical line below it, while the origin is shown as a black dot on bottom. Note that below the boundary of the zero cell lie all the other cells. A portion of such in dimension 3 is shown in \Cref{f.foam-cake}.}
 
 \label{fig.cell_A}
 \end{center}
 \end{figure}

\begin{figure}[!h]
 \begin{center}
\includegraphics[width=.77\linewidth]{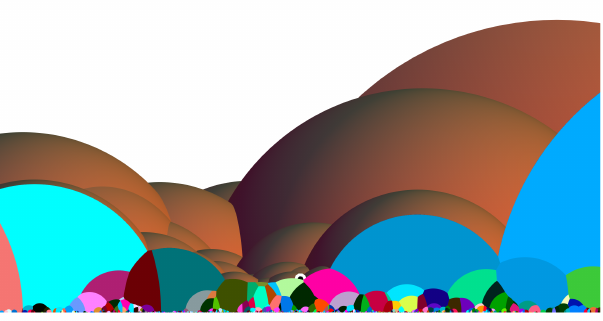}
 \caption{A vertical slice through the origin showing a portion of the boundary of the zero cell as well as those cells that intersect the vertical plane of the slice. The origin is shown as a black dot inside a white dot.}
 \label{f.foam-cake}
 \end{center}
 \end{figure}

In particular, if one forgets about the condition $\mathbf{1}_{ \rho \leq \sqrt{1+|x|^{2}}}$ in the intensity \eqref{eq.intm}, which excludes only finitely many hemispheres a.s., one sees that $ \mathcal{C}_d$ has almost the same law as a random dilation of the complement of balls whose centers and radii have intensity $\rho^{1-2d}\mathrm{d}\rho \,\mathrm{d}x$. This complement is the epigraph of a random continuous field over $ \mathbb{R}^{d-1}$ whose marginal law is made explicit in Proposition \ref{prop.hscpl}. This field and these balls have a law that is invariant under all Euclidean isometries of $\R^{d-1}$ and yields after orthogonal projection a random tessellation of $ \mathbb{R}^{d-1}$ that is a special case of the Laguerre tessellations studied in \cite{gusakova2022delaunay}.

The above Poissonnian construction of the zero cell $ \mathcal{C}_d$ is the main tool to study various distributional quantities such as: 
\begin{itemize}
\item the hole probability (Proposition \ref{prop.holeprob}) for $ \mathcal{C}_d$,
\item the stationary distribution of the height  and azimuth field describing the deposition model (Proposition~\ref{prop.hscpl}),
\item the vertex intensity of $\mathcal{C}_d$ in the Euclidean stationary model (Proposition \ref{prop.vertint}).
\end{itemize}
By passing to the limit $\lambda \to 0$ in several computations made for nonzero-intensity Poisson--Voronoi tessellations of $ \mathbb{H}_{d}$ \cite{GodKabTha}, we also compute explicitly the $k$-dimensional face intensities for $0 \leq k \leq d-1$ of the tiling $ \mathcal{V}_{d}$ in Theorem \ref{t.FaceInt}.  At certain points in this work, we study the dual complex to $\cells_d$, the ideal Poisson--Delaunay tessellation.

The paper is structured as follows: In Section \ref{sec:conv}, we set up an abstract framework for convergence of Voronoi tessellations whose points converge to the ideal boundary of a space.  Section~\ref{sec.idpvths} establishes the main properties of the ideal Poisson--Voronoi tessellations in $d$-dimensional hyperbolic spaces, along with the first three theorems stated in the introduction. At the end of \cref{sec.idpvths}, we develop the basic properties of the ideal Poisson--Delaunay tessellation. We then turn to precise computations of various distributions related to $ \mathcal{V}_d$ and to $\mathcal{C}_{d}$. Using  \cite{GodKabTha}, we compute the $k$-face intensities in Section \ref{s:intensities}, while we focus on the cell of the origin, $ \mathcal{C}_{d}$, in Section \ref{s.zerocell}. Section~\ref{sec.trees} treats the case for regular trees and recalls some results obtained by \cite{bhupatiraju}. Finally, Section \ref{s.open} provides some future directions. \medskip

\noindent \textbf{Acknowledgements}. N.C.~thanks Thomas Budzinski and Bram Petri for enlightening discussions around and during the conception of \cite{BudzinskiCurienPetri}. The work of M.A.~and N.C.~was supported by ANR RanTanPlan and ERC Consolidator Grant SuperGRandMA (Grant No.~101087572). N.E.\ was partially supported by the CNRS grant RT 3477, Geométrie Stochastique. R.L.\ was partially supported by NSF grants DMS-1612363 and DMS-1954086 and the Indiana University Institute for Advanced Study. M.\"U.\ was supported by grants from the Fondation Mathématique Jacques Hadamard (FMJH). All authors thank the reviewer for thoughtful remarks.

\section{Abstract convergence results}\label{sec:conv}
In this section, we give an abstract point of view on the convergence of Voronoi diagrams based on the concept of the Gromov boundary of a metric space $(E,d)$. The main idea is that building a Voronoi diagram requires us to compare only \emph{differences of distances}, rather than actual distances. In particular, the forthcoming Theorem \ref{lem.convvor} is purely deterministic and could be applied to many different spaces. 

\subsection{Gromov boundary and ideal Voronoi diagrams}\label{s.horo}
Let $(E,d)$ be a locally and boundedly compact metric space. The set $C(E)$ of real-valued continuous functions on $E$ is endowed with the topology of uniform convergence on every compact subset of $E$. We define an equivalence relation on $C(E)$ by declaring two functions equivalent if they differ by an additive constant; the associated quotient space endowed with the quotient topology is denoted by $C(E)/\mathbb{R}$. Following \cite{Gro81}, one can embed the original space $E$  in $C(E)/\mathbb{R}$ using the injection
$$ i : \begin{array}{ccccc} 
 E &\longrightarrow&{C}(E)& \longrightarrow& {C}(E)/\mathbb{R}\\ x&\longmapsto & d_{x}\coloneqq d(x,\cdot) & \longmapsto & \overline{d_{x}}\, .
\end{array}
$$
The \dfn{Gromov compactification} of $E$ is then the closure of $i(E)$ in $C(E)/\mathbb{R}$. The \dfn{Gromov boundary}%
\ or \dfn{horoboundary} $\partial E$ of $E$ is composed of the points in the closure of $i(E)$ in $C(E)/ \mathbb{R}$ that are not already in $i(E)$. The points in $\partial E$ are called \dfn{horofunctions}; see~\cite{Gro81}. They are obtained as limits of shifted distance functions $d(x_n,\cdot)-d(x_n,\origin)$ for some sequence of points $x_n \to \infty$. 
Let us denote a point on the Gromov boundary by $\theta$ and fix the associated representative function $ \mathrm{d}_{\theta}(\cdot)$ so that $ \mathrm{d}_{\theta}(\origin)=0$ for all $\theta \in \partial E$. In particular, if $\theta, \theta' \in \partial E$ and $x \in E$, one can make sense of the ``difference of distances''
 \begin{equation} \mathrm{d}_{\theta}(x) - \mathrm{d}_{\theta'}(x) \in \mathbb{R}. \label{eq:difference}  \end{equation}  We will then enhance the Gromov boundary by the addition of a second coordinate, real numbers which we call the \dfn{delays}, yielding the \dfn{extended ideal boundary}, $ \widehat{\partial E} \coloneqq \partial E \times \mathbb{R}$. Extending the preceding display, if $x \in E$ and $(\theta,\delta), (\theta',\delta') \in \widehat{\partial E}$ are two extended ideal points, we say that $x$ is \dfn{closer} to $(\theta,\delta)$ than to $(\theta',\delta')$ if 
 $$ \mathrm{d}_{\theta}(x) - \mathrm{d}_{\theta'}(x) \leq \delta'- \delta.$$ Letting
 \[
\ud\bigl(x, (\theta, \delta)\bigr)
\coloneqq
\ud_\theta(x) + \delta,
 \]
which we call the \dfn{linear separation} of $x$ from $(\theta, \delta)$ and which is real valued, the condition becomes $ \mathrm{d}\bigl(x, (\theta, \delta)\bigr) \leq \mathrm{d}\bigl(x, (\theta',\delta')\bigr)$. 

The level sets of a horofunction $\ud_\theta$ are called \dfn{horospheres} at $\theta$. A horosphere is thus a limit of spheres in the sense that $\ud_\theta(x) = \delta$ iff there are spheres containing $x$ centered at points $x_n \to \theta$ with radii $d(x_n, \origin) + \delta_n$ and $\delta_n \to \delta$. The horosphere $\ud_\theta^{-1}\bigl[\{\delta\}\bigr]$ is also the zero set of $\ud\bigl(\cdot, (\theta, -\delta)\bigr)$. If $\phi$ is an isometry of $(E, d)$, then $\phi$ acts naturally on $\partial E$ mapping one horosphere to another in the following way, as shown from the definition: for all $x \in E$,
\rlabel e.horotransform
{\ud_{\phi(\theta)}\bigl(\phi(x)\bigr) 
=
\ud_\theta(x) - \ud_\theta\bigl(\phi^{-1}(\origin)\bigr).}
Hence, if we write 
\rlabel e.horoaction
{\phi(\theta, \delta) \coloneqq \bigl(\phi(\theta), \delta + \ud_\theta(\phi^{-1}(\origin))\bigr),}
then we obtain an action of $\phi$ on the space of horospheres, $ \widehat{\partial E}$, that preserves linear separation:
\rlabel e.preserve-sep
{\forall x \in E \enspace \forall (\theta, \delta) \in \widehat{\partial E} \quad
\ud\bigl(\phi(x), \phi(\theta, \delta)\bigr)
=
\ud\bigl(x, (\theta, \delta)\bigr).}

\paragraph{(Ideal) Voronoi diagrams.} Let us first recall the basic definition of Voronoi diagrams. Fix a sequence of distinct points $ (x_i \ST i \geq 1) \in E$, called the \dfn{nuclei}. We will always suppose  that the nuclei $x_i$ are ranked by their increasing distances to the origin of $E$ and that $ d( \origin, x_{i}) \to \infty$. The  \dfn{Voronoi diagram} $ \mathrm{Vor}(x_{i} \ST i \geq 1) = ( C_{i} \ST i \geq 1)$ is defined via its \dfn{tiles} $C_{i}$ associated to $ x_i $ via
$$
C_{i} \coloneqq \left\lbrace w \in E \ST \forall j \geq 1\ d(w,x_i)\leq d(w,x_j) \right\rbrace  .
$$

We also define the ``open" version of the tiles denoted by $C_i^\circ$ when the weak inequality is replaced by a strict one. Notice that  the $C_i^\circ$ are  disjoint; since we imposed that $d( \origin, x_{j}) \to \infty$ as $j \to \infty$, it follows that $C_i^\circ$ are indeed open sets.

\medskip

 The framework above enables us to define diagrams using nuclei \emph{that are not points of $E$} but extended ideal points of $ \widehat{\partial E}$. More precisely, if $ \boldsymbol{\theta} \coloneqq (\theta_{i} \ST i \geq 1)$ is a sequence of boundary points on $\partial E$ and $ \boldsymbol{\delta}\coloneqq (\delta_{i} \in \mathbb{R} \ST i \geq 1)$ is an increasing sequence of reals  tending to $\infty$, we define the (ideal) Voronoi diagram associated to $ ( \boldsymbol{\theta},  \boldsymbol{\delta})$ as follows: 

 \begin{defn}[\textsc{Ideal Voronoi diagrams}]\label{d.IVT} The Voronoi diagram $ \mathrm{Vor}\bigl( (\theta_{i}, \delta_{i}) \ST {i \geq 1}\bigr)$ is given by its tiles $(C_{i} \ST i \geq 1)$, where 
 $$ C_{i} \coloneqq \bigl\{ x \in {E} \ST \forall j \ne i\enspace \mathrm{d}\bigl(x,(\theta_{i}, \delta_{i})\bigr) \leq \mathrm{d}\bigl(x, (\theta_{j}, \delta_{j})\bigr)  \bigr\},$$ and similarly with $C_i^\circ$ defined via a strict inequality.
 \end{defn}
 We say that the diagram is \dfn{nondegenerate} if $\overline{C_i^\circ} = C_i$ for every $i \geq 1$.  Notice that since we impose that $\delta_{i} \to \infty$, every compact $K \subset E$ intersects only a finite number of tiles of the diagram, and the tiles $C_i^\circ$ are again open. Also, the diagram is invariant under a shift of all delays $(\delta_{i} \ST i \geq 0)$ because it depends only on the differences of linear separations.
 \subsection{Convergence of diagrams}
 As we will see, the concept of Gromov boundary and ideal diagrams is well suited for studying \emph{convergence} of standard Voronoi diagrams.
We will use the \dfn{Fell topology} on the collection of closed subsets of $E$, which is generated by the sets $\{F \sut F \cap C = \varnothing\}$ for compact $C \subset E$ and the sets $\{F \sut F \cap G \ne \varnothing\}$ for open $G \subseteq E$. This topology makes the collection of closed subsets compact, while if the empty set is omitted, then the collection becomes locally compact (see \cite[Section 12.2]{SchneiderWeil}).
\begin{defn}[\textsc{Convergence of diagrams}] We will say that the sequence of diagrams $\bigl((C_i^{(k)} \ST i \geq 1)\bigr)_{k \ge 1}$ converges to $(C_i \ST i \geq 1)$ as $ k \to\infty$ if for each $i \geq 1$, the closed subsets $C_i^{(k)}$ converge to the closed subset $C_i$ in the Fell topology.\end{defn}

The following lemma, roughly speaking, entails the continuity of the Voronoi-{diagram} mapping with respect to the convergence of nuclei towards the extended Gromov boundary, $ \widehat{\partial E}$.
\begin{thm}\label{lem.convvor} Suppose that for each $k$, we have a sequence of points $(x_i^{ (k)} \ST i \geq 1)$ ranked by increasing distances to $\origin$ on a space $(E,d)$ that together satisfy the following three conditions:
\begin{enumerate}[label=\textup{(\roman*)}]
\item \textsc{(Convergence to the boundary)} For all $i\geq 1$, we have convergence in the Gromov sense   $$x_i^{ ( k)}  \xrightarrow[    k \to \infty]{} \theta_i \in  \partial E.$$
\item \textsc{(Convergence of proto-delays)} For all $i\geq 1$, we have  $$d(x_i^{ ( k)},  \origin) - d(x_1^{ ( k)}, \origin)  \xrightarrow[k \to \infty]{} \delta_{i}.$$ Furthermore, $\delta_{i} \to \infty$ as $i \to \infty$.
\item \textsc{(Nondegeneracy)} $\mathrm{Vor}\bigl( (\theta_{i}, \delta_{i}) \ST {i \geq 1}\bigr)$ is nondegenerate. 
\end{enumerate}
Then the Voronoi diagrams $ \mathrm{Vor}(  x_{i} ^{(k)} \ST \allowbreak i \geq 1)$ converge as $ k \to \infty$ to the ideal Voronoi diagram $ \mathrm{Vor}\bigl( (\theta_{i}, \delta_{i}) \ST {i \geq 1}\bigr)$.
\end{thm}
\begin{remark}[Degenerate cases] \label{remark:chiant}To see that the third condition in the lemma is needed, suppose that $E = [0, \infty)$, so that $\partial E$ consists of a single point, $\theta$. Then  $x_{i}^{(k)} \to \theta$ as soon as $  d(x_{i}^{(k)}, \origin) \to \infty$ as $k \to \infty$. If $d(x_1^{(k)}, \origin) < d(x_2^{(k)}, \origin)$ for each $k$ and the limiting delays satisfy $0=\delta_{1} = \delta_{2} <  \delta_{3} \leq  \cdots$, then the tiles $C_{i}$ of the limiting degenerate ideal diagram
$\mathrm{Vor}\bigl( (\theta, \delta_{i}) \ST i \geq 1\bigr)$ are $ C_{1} = C_{2} = E$  and $C_{i} = \varnothing$ for $i \geq 3$, yet $C_2^{(k)} \to \varnothing$ as $k \to\infty$. \end{remark}

\begin{proof} Denote by $C_{i}^{(k)}$ (resp., $C_i^{\circ,(k)}$) the closed (resp., open) tiles of $\mathrm{Vor}(  x_{i} ^{(k)} \ST i \geq 1)$ and by $C_{i}$ (resp., $C_i^\circ$) those of $\mathrm{Vor}\bigl( (\theta_{i}, \delta_{i}) \ST {i \geq 1}\bigr)$. Since the space of closed subsets of $E$ endowed with the Fell topology is itself compact, we can suppose up to passing to a subsequence that we have 
$$  C_i^{(k)} \xrightarrow[k\to\infty]{} \mathfrak{C}_{i}$$ in the Fell topology for some closed subset $ \mathfrak{C}_{i}$, and our goal is now to show that $ \mathfrak{C}_i = C_i$.
We define the proto-horofunctions
$$
 \mathrm{d}_{\x} (z) \coloneqq d(\x, z) - d (\x,\origin) ,
$$
as well as the proto-delays $ \delta_i^{(k)} \coloneqq d(x_i^{ ( k)},  \origin) - d(x_1^{ ( k)}, \origin)$. A given point $z \in E$ thus belongs to $C_{i}^{(k)}$ iff for all $j \ne i$, we have the inequality 
\bb\label{diffij}
d(\x,z)- d(\xj,z) =  \mathrm{d}_{\x} (z) -  \mathrm{d}_{\xj} (z) + \delta_i^{(k)} - \delta_j^{(k)} \leq  0 ,
\ee and similarly for $C_i^{\circ, (k)}$ with a strict inequality. 
By our first two assumptions, the function of $z$ in the last display converges uniformly on compact sets towards  
$$  \mathrm{d}_{\theta_{i}}(\cdot) - \mathrm{d}_{\theta_{j}}(\cdot) +\delta_{i}- \delta_{j}.$$
 It follows immediately that we have $ \mathfrak{C}_i = \lim_{k \to \infty} C_i^{(k)} \subseteq C_i$: indeed, if $z \in \lim_{k \to \infty} C_{i}^{{(k)}}$, then there are $z_{i}^{(k)} \in C_{i}^{(k)}$ satisfying \eqref{diffij} with $z_{i}^{(k)} \to z$, so that passing \eqref{diffij} to the limit, we deduce that $z \in C_{i}$.  For the other inclusion, we show that $C_{i}^{\circ} \subset \mathfrak{C}_{i}$ and use the non-degeneracy $ \overline{C_i^\circ} = C_i$ to conclude. If $z\in C_i^\circ$, then for every fixed $j \ne i$, \eqref{diffij} holds with strict inequality for all large $k$, whence $z \in C_i^{(k)}$ for all large $k$. This gives $z \in \mathfrak C_i$, as desired.
\end{proof}

\section{Ideal Poisson--Voronoi tessellations on hyperbolic spaces}\label{sec.idpvths}
As mentioned in the introduction, the limit in low intensity of Poisson--Voronoi tessellations in $ \mathbb{R}^{d}$ is trivial. This comes from the fact that, although the Gromov boundary of $ \mathbb{R}^{d}$ is nontrivial (it is homeomorphic to $ \mathbb{S}_{d-1}$), the polynomial growth of $ \mathbb{R}^{d}$ imposes that the difference of distances to $\origin$ (the delays) of the first two closest points in a PPP with intensity $\lambda$ tends to $\infty$ in probability as $\lambda \to 0$. Indeed, it is easy to convince oneself that superpolynomial growth is required to get tight delays as $\lambda \to 0$. A natural choice of such a space is the $d$-dimensional hyperbolic space, $ \mathbb{H}_{d}$.

\subsection{Background on hyperbolic spaces}
\label{sec.basicfacts}
For the hyperbolic space $ \mathbb{H}_d$ with dimension $d \geq 2$, we will use the model of the open unit ball $ \mathbb{B}_d \coloneqq \{ x \in \mathbb{R}^{d}\ST |x|< 1\}$ equipped with the metric $2|dx|/\bigl(1 - |x|^2\bigr)$, which leads to the  $$ \mbox{distance} \quad (x,y) \mapsto 2 \arcsinh{\frac{|x-y|}{\sqrt{(1-|x|^2)(1-|y|^2)}}} \quad \mbox{and measure} \quad \Bigl(\frac{2}{1-|x|^{2}} \Bigr)^{d}\mathbf{1}_{\mathbb{B}_d } \cdot \mathrm{Leb}, $$ and the model of the upper half-space $ \mathbb{U}_{d}\coloneqq \mathbb{R}^{d-1} \times \mathbb{R}_{>0}$ equipped with the metric $|dx|/x_d$, where $x = (x_1, x_2, \dots, x_d)$, which leads to the $$\mbox{distance} \quad (x,y) \mapsto 2 \arcsinh{\frac{|x-y|}{2 \sqrt{x_{d} \,  y_{d}}}}\quad \mbox{and measure} \quad \frac{1}{x_{d}^{d}}\mathbf{1}_{x_{d}>0}\cdot \mathrm{Leb}, $$ 
where $\Leb$ denotes the usual Lebesgue measure; see~{\cite[pp.\ 9, 11, 13, 14]{Stoll}}. 
We will write respectively $\mathrm{d}_{ \mathbb{H}_d}$ and $ \mathrm{Vol}_{ \mathbb{H}_d}$ for the hyperbolic distance and measure. The origin of $\mathbb{H}_d$ will be denoted by $\origin$, meaning the center of the ball or $(0, 0, \ldots, 1)$ in $\mathbb{U}_d$.

The isometries of $\mathbb{H}_d$ form the group $\mob_d$ of  M\"obius transformations of $\HH_{d}$.
An isometric mapping from $\mathbb{B}_d$ onto $\mathbb{U}_d$ is given by the generalized Cayley transform, which is the diffeomorphism $\kappa\colon \mathbb{B}_d \rightarrow \mathbb{U}_d$ defined by
\begin{align*}
\kappa(z) &= x \coloneqq \frac{1}{z_1 ^2 + \dots + z_{d-1} ^2 + (z_d-1)^2}\big( 2z_1, \dots, 2z_{d-1}, 1-|z|^2 \big) , \\
\kappa^{-1} (x) &= z \coloneqq \frac{1}{x_1 ^2 + \dots + x_{d-1} ^2 + (x_d+1)^2}\big( 2x_1, \dots, 2x_{d-1}, |x|^2 -1 \big) 
\end{align*}
for $z \in \mathbb{B}_d$ and $x \in \mathbb{U}_d$ \cite[p.~66]{leejm}. If $\rf$ denotes reflection in the plane $x_d = 0$, then the maps $z \mapsto \kappa\bigl(\rf (z)\bigr)$ and $x \mapsto \rf\bigl(\kappa^{-1}(x)\bigr)$ are both restrictions of inversion in the sphere of radius $\sqrt2$ centered at $(0, 0, \dots, -1) \in \R^d$; see \cite[Exercise 2.4.6(a)]{Stoll}.
The maps $\kappa$ and $\kappa^{-1}$ extend continuously to the boundaries where $|z| = 1$ and  $x_d = 0$ or $x = \infty$, yielding
the stereographic projection from the north pole onto the plane containing the equator, $\mathrm{Ste}\colon \mathbb{S}_{d-1} \to  \mathbb{R}^{d-1}$, and its inverse.
The image of the uniform measure on $ \Sph_{d}$ by $\mathrm{Ste}$ to $\mathbb{R}^{d-1}$ is given by:
\begin{lem}\label{lem.stereo} The image of the uniform measure on $ \mathbb{S}_{d-1}$ by the stereographic projection $\mathrm{Ste}\colon \mathbb{S}_{d-1} \to  \mathbb{R}^{d-1}$ is the stereographic law given by 
$$\frac1{c_d\bigl(1+|x|^2\bigr)^{d-1}} \,\ud x
=
\frac{1}{{c_d}\bigl(1+\sum_{i=1}^{d-1} x_i^2 \bigr)^{d-1}} \mathrm{d}x_1 \cdots\,  \mathrm{d}x_{d-1},$$   where $c_d$ is as in \eqref{e.defc_d}.
\end{lem}
\begin{proof} This can be proved by a tedious but elementary calculation or by the explicit expression of the metric tensor on the sphere $ \mathbb{S}_{d-1}$ in stereographic coordinates in \cite[Equation 3.7]{leejm}. 
A third proof uses the hyperbolic Poisson kernels, since $\Ste$ is merely the extension to the boundaries of the isometry $\kappa$; for such a proof, see \cite[Exercise 5.7.15]{Stoll}.
\end{proof}

\subsection{The ideal tessellation $ \mathcal{V}_d$} \label{s.IPVT-stronger}

Below we prove a slightly stronger version of Theorem~\ref{thm.convhyp}. 
 
 \begin{theorem}\label{cor.thetaD} The Poisson--Voronoi tessellation of $( X_{i}^{(\lambda)} \ST i \geq 1)$ converges in distribution as $\lambda \downarrow 0$ towards the nondegenerate ideal Voronoi diagram $\mathrm{Vor}( \boldsymbol{\Theta},  \mathbf{D})$ where $\boldsymbol{\Theta} = (\Theta_{1}, \ldots )$ are i.i.d.\ uniform angles over $\mathbb{S}_{d-1} = \partial \mathbb{B}_{d}$ and $  \mathbf{D} = ( D_{i} \ST i \geq 1)$ is such that $ (\frac{c_{d}}{d-1}\mathrm{e}^{(d-1)D_{i}})_{i\geq 1}$ is a homogeneous Poisson point process (PPP) on $ \mathbb{R}_{+}$ of unit intensity. The process $\mathbf D$ is independent of $\boldsymbol \Theta$.
 \end{theorem}

We call $\mathrm{Vor}( \boldsymbol{\Theta},  \mathbf{D})$ the \dfn{ideal Poisson--Voronoi tessellation} of $ \mathbb{H}_d$ and denote it by $ \mathcal{V}_{d}${; we still need to prove that it is a tessellation, which we will do in \Cref{s.mobius} when we prove \cref{thm:decomposition}}.

 \begin{proof} 
 
 Let $\mathbf{X}^{(\lambda)}=(X_i^{(\lambda)} \, \ST \, i\geq 1 )$ be a PPP of intensity\footnote{The normalization of the intensity as $\lambda^{d-1}$ is here to ensure that the closest point to $\origin$ in $ \mathbf{X}^{(\lambda)}$ is roughly at distance $\log(1/\lambda)$ as $\lambda \to 0$ for every $d \geq 2$.} $\lambda^{d-1} \cdot \mathrm{Vol}_{ \mathbb{H}_d}$ on the hyperbolic space $\Hyp{d}$, where as usual the points are ranked by increasing hyperbolic distance to the origin, $\origin$. In particular, almost surely $\mathrm{d}_\Hyp{d}(\origin,X_i^{(\lambda)})$ is strictly increasing in $i \geq 1$. Our goal is to prove convergence of the corresponding Voronoi tessellations towards a limiting ideal diagram as $\lambda \to 0$. To apply Theorem \ref{lem.convvor}, one needs to check convergence of points towards the ideal boundary and convergence of proto-delays. Both turn out to be very easy:

\paragraph{Convergence of proto-delays.} By the mapping theorem for Poisson processes \cite[Theorem 24.16]{klenke2013probability}, it is simple to see that as $\lambda \to 0$, we have 
$$ \frac{\mathrm{d}_\Hyp{d}(\origin,X_1^{(\lambda)})}{|\!\log \lambda|} \xrightarrow[\lambda \downarrow 0]{(\mathbb{P})} 1,$$ and so it is natural to introduce the shifted distances, i.e., the \dfn{{proto-}delays}, as
\begin{equation}\label{def.delays}D_i^{\left(\lambda\right)} \coloneqq \mathrm{d}_\Hyp{d}(\origin,X_i^{(\lambda)})- \log(1/\lambda), \quad  i\geq 1. \end{equation}

The mapping theorem for Poisson processes readily entails that, 
as $\lambda \downarrow 0$, the set of increasing proto-delays $( D_i^{(\lambda)})_{ i\geq 1}$ converges in law to the increasing points  $(D_{i})_{i\geq 1}$ of a Poisson point process on $\mathbb{R}$ with intensity measure $$c_d \,  \mathrm{e}^{(d-1)s} \,\mathrm{d}s.$$
Equivalently, the set $(  \frac{c_{d}}{d-1}\mathrm{e}^{(d-1)D_i})_{ i\geq 1 }$ is a rate-1 homogeneous Poisson process on $ \mathbb{R}_+$.
These statements can be proved via the following simple calculation. Recall that the volume growth function for hyperbolic space is given by $$f_{d}({u}) \coloneqq \mathrm{Vol}_{\Hyp{d}}\big(B_\Hyp{d}(\origin,{u})\big)=\Omega_d \int_0^{{u}} \left(\sinh{\rho}\right)^{d-1} \;  \mathrm{d}\rho, $$ where $\Omega_d\coloneqq2\frac{\pi^{d/2}}{\Gamma(\frac{d}{2})}$ is the Euclidean volume of $\mathbb{S}_{d-1}$. A straightforward calculation shows that for all $x,y \in \mathbb{R}$, we have
$$
\lim_{\lambda \downarrow 0} \lambda^{d-1} \left ( f_{d}\Bigl(x+\log{\frac{1}{\lambda}}\Bigr)-f_{d}\Bigl(y+\log{\frac{1}{\lambda}}\Bigr) \right) = \frac{2^{1-d}}{d-1}\Omega_d \bigl( \mathrm{e}^{(d-1)x}- \mathrm{e}^{(d-1)y} \bigr),$$ whence both claims follow from the mapping theorem for Poisson processes.

\paragraph{Convergence to ideal points.} Let us return to the Poisson point process of nuclei $(X_i^{(\lambda)}\ST i\geq 1  )$ and adopt the ball model $ \mathbb{B}_{d}$ for hyperbolic space. By rotational symmetry, it is clear that conditionally on the distance process $\bigl(\mathrm{d}_\Hyp{d}(\origin,X_i^{(\lambda)}) \ST i \geq 1\bigr)$, or equivalently conditionally on the proto-delays, the angles $(\Theta_{i}^{(\lambda)} \ST i \geq 1)$ of the points $X_{i}^{(\lambda)}$ in the ball model $ \mathbb{B}_{d}$ are i.i.d.\ uniform over $ \mathbb{S}_{d-1}$. It is well known that the Gromov boundary of $ \mathbb{B}_{d}$ is $\mathbb{S}_{d-1}$, or equivalently that a divergent sequence of points $x_{i} \in \mathbb{B}_{d}$ with angles $ \theta_{i}$ converges to $ \theta \in \partial \mathbb{B}_{d} = \mathbb{S}_{d-1}$ if and only if $ x_{i} \to \infty$ and $\theta_{i} \to \theta$~(see
 \cite[Example 8.11, page~265]{BridsonHaefliger}). 

\medskip

 We can now conclude the proof of the theorem. By the Skorokhod embedding theorem, we can couple, on the same probability space, all the Poisson point processes $( X_{i}^{(\lambda)} \ST i \geq 1)$ for $\lambda >0$ in such a way that 
for every $i \geq 1$, the proto-delays and the angles converge almost surely: $D_{i}^{(\lambda)} \to D_{i}$ and $\Theta_{i}^{(\lambda)} \to \Theta_{i}$. In particular, $ D_{i} \to \infty$ almost surely and the convergence of angles implies the convergence towards the Gromov boundary. Using the fact that all $\Theta_i$ and $D_i$ have continuous distributions, it is an easy matter to check that the limiting ideal diagrams are a.s.\ nondegenerate. We can then apply~\cref{lem.convvor} to get the desired convergence. \end{proof}

\subsection{Computing the separations} \label{sssec:compsep}
Having identified the limiting tessellation, we now aim to look more closely into its properties. The final piece needed to prove Theorems \ref{t.weighted} and \ref{thm:decomposition} is a concrete way of computing distances from extended ideal points. For this, we will see that it is more practical to first perform a change of variable and consider the exponentials of the delays.\medskip

 Recall the definition of an extended ideal point $(\theta, \delta) \in \widehat{\partial \mathbb{H}_d} = \partial \mathbb{H}_d\times \mathbb{R}$ where $\theta$ is a boundary point and $\delta \in \mathbb{R}$ is a delay. It will be convenient in the rest of the manuscript to consider the following equivalent description after taking the exponentials of the delays: Introduce $ \widetilde{ \partial \mathbb{H}_d } \coloneqq \partial\mathbb{H}_d\times \mathbb{R}_{+}$, which we call the \dfn{corona}, and consider the image of the extended ideal points 
\[
(\theta, \delta) \in \widehat{\partial \mathbb{H}_d} \mapsto \bigl( \theta, \frac{c_{d}}{d-1}\mathrm{e}^{(d-1) \delta} \bigr) \in  \widetilde{ \partial \mathbb{H}_d },
\]
where $c_d$ is as in \eqref{e.defc_d}. We call the first coordinate of a point in the corona its \dfn{angle} and the second coordinate its \dfn{radius}. We will also use the letters $r, r_i, R, R_i$ for radii and the letter ``$\delta$'' or ``$D$" for delays. 
It follows from the first part of the proof of \cref{cor.thetaD} that $(\theta _i, \frac{c_d}{d-1} \ue^{(d-1)D_i})_{i \geq 1}$ is a Poisson point process in the corona with intensity
\begin{equation}  \label{eq:defmud}
\mu_{d} \coloneqq  \mathrm{Unif} \otimes \mathrm{Leb};
\end{equation}
the points of this PPP will be called \dfn{ideal nuclei}. In these new coordinates where $r = \frac{c_d}{d-1} \ue^{(d-1)\delta}$, we introduce the \dfn{(exponential) separation} between $z \in \mathbb{H}_d$ and a point $(\theta,r)$ in the corona in terms of the linear separation between $z$ and an ideal point $(\theta,\delta)$ as follows:
\begin{equation}\label{eq.defsep}
\mathbf{d}\bigl(z, (\theta, r)\bigr)
\coloneqq
\frac{c_d}{d-1} \exp\bigl\{(d-1) \ud \bigl(z, (\theta, \delta)\bigr)\bigr\}.
\end{equation}
This monotone{, strictly} increasing transformation preserves the inequality in \cref{d.IVT} and hence the associated ideal Voronoi tessellation.
 See \Cref{f.corona} for an example of the Poisson point process on the corona and its associated Voronoi and Delaunay tessellations.
In the latter tessellation, one associates to every Voronoi vertex $v$ the ideal simplex formed from the $d+1$ angles of the ideal nuclei whose separation from $v$ is smallest{. The resulting collection of ideal simplices forms the Delaunay tessellation}; complete details are given in \cref{s.delaunay}.

\begin{figure}[htp] 
\includegraphics[width=\textwidth]{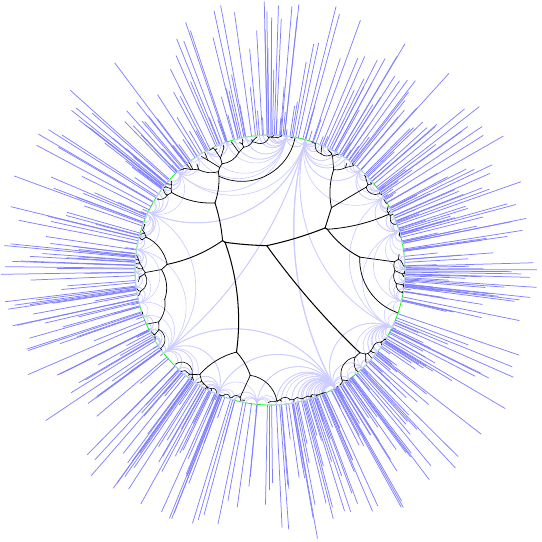}
\caption{Portions of the ideal Voronoi $ \mathcal{V}_d$ (in black) and Delaunay tessellations (in light blue, where ideal nuclei are joined if the corresponding tiles are adjacent in $ \mathcal{V}_d$), with the corona showing the first 500 ideal nuclei. The radii of the nuclei are scaled linearly to $[1.02, 2.02]$ for graphical reasons. Each point $(\theta, r)$ in the corona is joined by a line segment to $\theta$ in the ideal boundary.%pub% Every Delaunay vertex is at a nucleus, but not every nucleus, especially those with large radii, is determined to be a Delaunay vertex with only the first 500 nuclei given.
}
		\label{f.corona}
\end{figure}

\subsubsection{Separations to the ideal nuclei via the Poisson kernel} Remember that the ideal boundary of $\mathbb{B}_d$ is identified to the $(d-1)$-dimensional sphere, $ \mathbb{S}_{d-1}$. In this model, we recall the expression of the (hyperbolic)  \dfn{Poisson kernel} that gives the density of the harmonic measure on $\mathbb{S}_{d-1}$ seen from a point $z \in \mathbb{B}_d$: For $z \in  \mathbb{B}_d$ and $\theta\in \mathbb{S}_{d-1}$, write
\[
K(z, \theta)
\coloneqq
\Bigl(\frac{1 - |z|^2}{|z - \theta|^2}\Bigr)^{d-1};
\]
see \cite[Definition 5.1.1]{Stoll}. For the upper half-space model, given $z \in \uhs_d$ and $\theta \in \R^{d-1}$ , the kernel is
\[
K(z, \theta)
\coloneqq
\frac1{c_d}\Bigl(\frac{z_d}{|z - \theta|^2}\Bigr)^{d-1}; 
\]
see \cite[(5.6.1)]{Stoll}. However, a slightly modified kernel is more useful to us in $\uhs_d$:
\[
\Kh(z, \theta)
\coloneqq
\rcases
{\displaystyle \Bigl(\frac{z_d\bigl(1+|\theta|^2\bigr)}{|z - \theta|^2}\Bigr)^{d-1} &\mbox{if }\theta \ne \infty, \\[10pt]
\displaystyle (z_d)^{d-1}  &\mbox{if }\theta = \infty.} \\
\]
These kernels occur naturally when computing separations from ideal nuclei:

\begin{lem}[\textsc{Separation in coordinates}]\label{lem:evaldist}

The separation from $z \in \HH_d$ to $(\theta, r) \in \widetilde{ \partial \mathbb{H}_d }$ satisfies
\[
\mathbf{d}\bigl(z, (\theta, r)\bigr)
=
\rcases
{\displaystyle \frac{r}{K(z, \theta)} &\mbox{in the ball model,}\\[10pt]
\displaystyle \frac{r}{\Kh(z, \theta)} &\mbox{in the upper half-space model.}\\
}
\]
\end{lem}

Thus, if $ (\theta_1, r_1),(\theta_2, r_2) $ are two points of the corona, then a given point $z \in \uball_d$ has a smaller separation to $(\theta_1, r_1)$ than to $(\theta_2, r_2)$ iff 
 \begin{equation} \label{eq:closernuclei} \frac{r_1}{K(z, \theta_1)} \leq \frac{r_2}{ K(z, \theta_2)}
 \quad\mbox{iff}\quad
 r_1|z-\theta_1|^{2(d-1)} \le  r_2|z-\theta_2|^{2(d-1)},  \end{equation} 
whereas for $z \in \uhs_d$, the condition is
 \begin{equation} \label{eq:closernucleiUHS} \frac{r_1}{\Kh(z, \theta_1)} \leq \frac{r_2}{ \Kh(z, \theta_2)}
 \quad\mbox{iff}\quad
 r_1\Bigl(\frac{|z-\theta_1|^2}{1+|\theta_1|^2}\Bigr)^{d-1} \le  r_2\Bigl(\frac{|z-\theta_2|^2}{1+|\theta_2|^2}\Bigr)^{d-1},  \end{equation} 
where the fraction is interpreted as $1$ if $\theta_i = \infty$.

\rproof
Since $\ue^{\arcsinh t} -\ue^{-\arcsinh t}=2t$, we have  $\ue^{\arcsinh t} \sim 2t$ as $t \to \infty$. Therefore, if $\uball_d \ni x_n \to \theta \in \sph$ as $n \to\infty$, we have for all $y \in \uball_d$ that 
\rlabel e.asympdist
{\exp\bigl\{\ud_{\HH_d}(x_n, y)\bigr\}
\sim
\frac{4|\theta - y|^2}{\bigl(1 -|\theta|^2\bigr)\bigl(1-|y|^2\bigr)}.}
Recall from \cref{s.horo} that the horofunction $d_\theta(y) \coloneqq \lim_{n\rightarrow \infty} \bigl(\ud_{\HH_d}(x_n, y) - \ud_{\HH_d}(x_n, \origin)\bigr)$. Applying \eqref{e.asympdist} to $y = z$ and to $y = \origin$ yields
\rlabel e.evallim
{\ue^{\ud_\theta(z)}
=
\frac{|z-\theta|^2}{1-|z|^2}.}
Using the definitions of $\mathbf{d}\bigl(z, (\theta, r)\bigr)$ and of $K$ completes the proof.

The proof is almost the same in $\uhs_d$: For $\uhs_d \ni x_n \to \theta \in \sph$ as $n \to\infty$, we have for all $y \in \uhs_d$ that 
\rlabel e.asympdistUHS
{\exp\bigl\{\ud_{\HH_d}(x_n, y)\bigr\}
\sim
\rcases
{\displaystyle \frac{|\theta - y|^2}{x_{n, d}\, y_d} &\mbox{if }\theta \ne \infty, \\[10pt]
\displaystyle \frac{x_{n, d}}{y_d} &\mbox{if }\theta = \infty.} \\
}
Here, when $\theta = \infty$, we may take the first $d-1$ coordinates of $x_n$ to be fixed (not changing with $n$).
Applying this to $y = z$ and to $y = \origin=(0_{d-1},1)$ yields
\rlabel e.evallimUHS
{\ue^{\ud_\theta(z)}
=
\rcases
{\displaystyle \frac{|z-\theta|^2}{z_d\bigl(1+|\theta|^2\bigr)} &\mbox{if }\theta \ne \infty, \\[10pt]
\displaystyle \frac1{z_d} &\mbox{if }\theta = \infty.} \\
}
Using the definitions of $\mathbf{d}\bigl(z, (\theta, r)\bigr)$ and of $\Kh$ completes the proof.
\Qed

It may appear incongruous that $\Kh$ rather than $K$ appears in the upper half-space model. The reason is that the radius $r$ is based on the origin, $\origin$, which is not as natural a reference point for $\uhs$ as it is for $\uball$. A more natural reference point for $\uhs$ is the ideal point, $\infty$. One can, in fact, couple the PPPs of intensity $\lambda > 0$ of nuclei in $\uhs$ by dilating from $\infty$: see \cref{l.UHSdilate} below to see how a more natural expression ensues.

\medskip

The following corollary will be useful for the proof of~\cref{thm.superposition}.
 
\begin{cor} \label{prop:bissec} Consider two ideal nuclei, $( \theta_{1}, r_{1})$ and $( \theta, r)$. In the upper half-space model $\mathbb{U}_{d}$, if $\theta_{1} = \infty$, then the $(d-1)$-hyperplane of points at equal separation from the two ideal nuclei is the Euclidean hemisphere centered at $\theta$ with radius 
$$ \sqrt{1 + |\theta|^{2}} \,  \Bigl(\frac{r_{1}}{r}\Bigr)^{\frac{1}{2(d-1)}}.$$
\end{cor}

\rproof
This is immediate from \eqref{eq:closernucleiUHS}.
\Qed

\subsubsection{Almost-sure limit via dilations}

The convergence in \cref{cor.thetaD} is a convergence in law, but actually there are explicit ways to realize the coupling towards the end of the proof for different $\lambda $  via dilation as follows:

\begin{remark}[\textsc{Dilations from $\origin$ in $\uball_d$}] \label{remark_dilation} Let $(\boldsymbol{\Theta}, \mathbf{R}) = \bigl(( {\Theta}_{i}, {R}_{i}) \ST i \geq 1\bigr)$ be a Poisson point process on $  \mathbb{S}_{d-1} \times \mathbb{R}_{+}$ with intensity $\Unif \otimes \Leb$. Let $v_d(r)$ be the hyperbolic volume of the ball of Euclidean radius $r$ centered at the origin in the ball model of $  \mathbb{H}_{d}$. For $\lambda >0$, define the point process $ \tilde{ \mathbf{X}}^{(\lambda)}$ as 
$$ \mathrm{Angle}(\tilde{X}_{i}^{(\lambda)}) \coloneqq {\Theta}_{i} \quad \mbox{ and }\quad \mathrm{d}_{ \mathrm{Euc}}( \mathbf{0}, \tilde{X}_{i}^{(\lambda)}) \coloneqq v_d^{-1}\bigl({R}_{i}/\lambda^{d-1}\bigr).$$
Then it is straightforward to check that for each $\lambda>0$, the point process $ \tilde{\mathbf{X}}^{ (\lambda)}$ is Poisson with intensity $\lambda^{d-1}$ and for which we have the almost sure convergence $ \tilde{\mathbf{X}}^{ (\lambda)} \to (\boldsymbol{\Theta}, \mathbf{D})$, where $  \mathbf{R} =  \frac{c_{d}}{d-1} \exp\{ (d-1)\mathbf{D}\}$. In particular, by the proof of Theorem \ref{cor.thetaD}, its Voronoi tessellation converges almost surely towards $ \mathcal{V}_{d}$; see \Cref{f.successiveVor}.
\end{remark}

\begin{figure}[!h]
 \begin{center}
 \includegraphics[height=5cm]{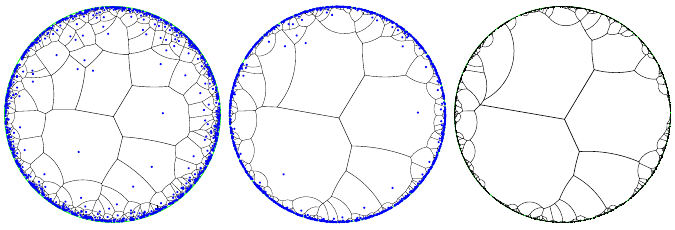}
 \caption{ Left to right: Poisson--Voronoi tessellations of the hyperbolic plane (in the unit disk model) with decreasing intensity coupled via dilations from $\origin$ using 1000 nuclei. Their limit (on the right with 10,000 ideal nuclei) is $ \mathcal{V}_2$, the  \emph{ideal Poisson--Voronoi tessellation} of the hyperbolic plane.}
 \label{f.successiveVor}
 \end{center}
 \end{figure}
 
 \begin{lem}[\textsc{Dilations in $\uhs_d$}]\label{l.UHSdilate}
Let $\XX$ be a Poisson point process with intensity 1 in $\HH_d$. In the upper half-space model, the intensity measure, $\ud \Vol_{\HH_d}$, is $\ud x_1 \,\ud x_2 \cdots \,\ud x_{d-1}  \, x_d^{-d} \,\ud x_d$. Define $\XX^{(\lambda)}$ by mapping $(x_1, \dots, x_d) \mapsto (x_1, \dots, x_{d-1}, \lambda x_d)$, which we write as $x \mapsto x^{(\lambda)}$. Then $\XX^{(\lambda)}$ is a PPP of intensity $\lambda^{d-1}$. As $\lambda \downarrow 0$, the Voronoi tessellation associated to $\XX^{(\lambda)}$ a.s.\ converges to the diagram given by \eqref{e.coordCells}.
\end{lem}

Note that in contrast to the ball model, the process $N$ used for the upper half-space model in \cref {t.weighted} does not use the radii as defined by using an origin in $\HH_d$.

\rproof
The Poisson mapping theorem gives that $\XX^{(\lambda)}$ is a PPP of intensity $\lambda^{d-1}$.
By \eqref{e.asympdistUHS}, for each $x \in \XX$ and all $z \in \uhs_d$, we have 
\[
\exp\bigl\{\ud_{\HH_d}(x^{(\lambda)}, z)\bigr\}
\sim
\displaystyle \frac{|\theta - z|^2}{\lambda x_{d}\, z_d} ,
\]
where $\theta \coloneqq (x_1, \dots, x_{d-1})$, showing that the proto-delays converge a.s. Write $p(x) \coloneqq \bigl(\theta, \frac{1}{(d-1)} x_d^{1-d}\bigr)$. Since $\XX^{(\lambda)}$ converges to $\cells_d$ by \cref{cor.thetaD}, it remains to show that $\bigl\{p(x) \ST x \in \XX\bigr\}$ is a Poisson process of intensity $\Leb \otimes \Leb$. This follows from the Poisson mapping theorem.
\Qed

We can now conclude the proof of \cref{t.weighted}:

 \begin{proofof}{\cref{t.weighted}}
Knowing that the limit $\cells_d$ exists and is described by \cref{cor.thetaD}, the proof follows by appeal to \eqref{eq:closernuclei} in the case of the ball model.
The description in the case of the upper half-space model follows from \cref{l.UHSdilate}.
\end{proofof}

 \subsection{First properties of $ \mathcal{V}_{d}$}

Let us establish the first few properties of $ \mathcal{V}_d$ and Theorem \ref{thm:decomposition}. The faces of the ideal Voronoi cells are totally geodesic. This follows from the equivalent fact for the Poisson--Voronoi tessellations with positive intensity. 

\subsubsection{Topological properties}

Our ideal Voronoi diagrams share the same a.s.\ local properties as standard Poisson--Voronoi tessellations in dimension $d$:

\begin{prop}\label{p.locfinite} Almost surely, the diagram $ \mathcal{V}_{d}$ is a locally finite, face-to-face, and normal tessellation.
\end{prop}

\begin{proof} The argument is almost the same as for standard Poisson--Voronoi tessellations: 
Given a fixed ball in $\mathbb{B}_d$, the inequality \eqref{eq:closernuclei} combined with the fact that the ideal radii tend to infinity a.s.\ shows that there are only finitely many ideal nuclei that could have smaller separation from any point in that ball than the separation from the ideal nucleus with smallest radius. Therefore, only finitely many ideal Voronoi cells intersect that ball. Since the cells are convex, so is each intersection with the ball, as well as all finite intersections among the cells. As a consequence, the diagram has only finitely many faces in the ball, i.e., is locally finite. It then follows from \cref{cor.thetaD} that $\cells_d$ is a.s.\ a tessellation.

We next prove that $\cells_d$ is face-to-face.
Indeed, suppose that $X$ and $Y$ are two distinct ideal nuclei whose cells $C(X)$ and $C(Y)$ have nonempty intersection.
Write $H$ for the geodesic hyperplane of points at equal separation from $X$ and $Y$.
Then 
$C(X) \cap C(Y) \subseteq C(X) \cap H$. Since $C(X) \cap H \ne \varnothing$, we have that $H$ is a supporting hyperplane of $C(X)$, whence $C(X) \cap H$ is a face of $C(X)$ by definition of face. It remains to prove that $C(X) \cap C(Y) = C(X) \cap H$. If not, then choose $w \in C(X) \cap H \setminus C(Y)$. Since $w \notin C(Y)$, there is some ideal nucleus $Z$ with $\bud(w, Z) < \bud(w, Y)$. However, $\bud(w, X) \le \bud(w, Z)$ because $w \in C(X)$, and these two inequalities contradict $w \in  H$, i.e., $\bud(w, X) = \bud(w, Y)$.

To show normality, 
recall that equality in \eqref{eq:closernuclei} is the condition that $z$ has equal separation from two points in the corona. Given $x_i \in \corona$, let $S(x_1, \ldots, x_m)$ be the set of points $z \in \uball_d$ whose separation from $x_i$ is the same for all $i \in [1, m]$.  When $x_1 \ne x_2$,
the set $S(x_1, x_2)$ is a totally geodesic hyperplane in $\HH_d$, contained in a Euclidean sphere of dimension $d-1$.\footnote{For $d = 2$, the fact that this locus in the plane is a circle is due to Apollonius.} 
It is straightforward to see that for $m > 2$, the set $S(x_1, \ldots, x_m)$ is either empty or contained in a sphere of dimension one less than a sphere containing $S(x_1, \ldots, x_{m-1})$ for $\mu_d^m$-a.e.\ $(x_1, \ldots, x_m) \in \corona^m$ with $\mu_d$ defined as in \eqref{eq:defmud}. 
Therefore, $\varnothing \ne S(x_1, \ldots, x_m) = S(x_1, \ldots, x_{m-1})$ only on a set of $\mu_d^m$-measure 0.

Consider now the ideal nuclei with radius at most $\rho$. Given that there are $m$ such ideal nuclei, their distribution in random order is proportional to the restriction of $\mu_d^m$ to $m$-tuples of points in the corona with radius at most $\rho$. Thus,
a.s., for any set $\{Y_1, \ldots, Y_m\}$ of $m$ ideal nuclei of radius at most $\rho$, the totally geodesic plane $S(Y_1, \ldots, Y_m)$ has dimension $d+1-m$ or is empty.
Because this holds for all $\rho$, the same holds without this restriction on the radii.

Finally, suppose that $ \mathfrak{s}$ is a $k$-face of $\cells_d$ with $0 \le k \le d-1$. Because $\cells_d$ is face-to-face a.s., there exist distinct ideal nuclei $Y_1, Y_2, \ldots, Y_m$ the intersection of whose cells is $ \mathfrak{s}$. By what we proved in the last paragraph, $m \le d+1$. Thus, $ \mathfrak{s}$ affinely spans $S(Y_1, \ldots, Y_m)$ and $d+1-m = k$.
\end{proof}

In fact, we have quantitative control over the number of faces that appear in a given ball:

\begin{prop}[\textsc{Tail bounds for crowding}]\label{p.tailbound}
Let $B_u = B_u(\origin)$ be the ball centered at $\origin$ with hyperbolic radius $u$. Write $\alpha \coloneqq 1/(2\ue^{2(d-1)u} - 1)$.
The probability that the number of $k$-faces of\/ $\cells_d$ that intersect $B_u$ is at least $n$ is at most $2(1-\alpha)^{n^{1/(d+1-k)}-1} < 2\ue^{-\alpha (n^{1/(d+1-k)}-1)}$ for $0 \le k \le d$ and $n \ge 0$.
\end{prop}

%pub%For small $u$, $k = 0$ and $n = 1, 2$, we have a much better exponent. E.g., we get a bound $O(u^d)$ when $n = 1$ and $O(u^{d+1})$ when $n = 2$. For $n = 1$, this is the correct order.

\rproof
If $z$ is a point of $B_u$ that belongs to the cell of an ideal nucleus $(\theta, r) \ne (\Theta_1, R_1)$, then the separation of $z$ from $(\theta, r)$ is at most that from $(\Theta_1, R_1)$, whence
\[
R_1^{1/2(d-1)} (1+a)
\ge
R_1^{1/2(d-1)} |z - {\Theta}_1|
\ge
r^{1/2(d-1)} |z - \theta|
\ge
r^{1/2(d-1)} (1 - a),
\]
where $a \coloneqq \tanh (u/2)$ is the Euclidean radius of $B_u$. That is, $r \le R_1 \bigl[(1+a)/(1-a)\bigr]^{2(d-1)} = R_1 \ue^v$, where $v \coloneqq {2(d-1)u}$. Hence, the number $N$ of ideal nuclei with radius in $(R_1, R_1 \ue^v]$ is at least the number of cells---other than the cell of $\origin$---that intersect $B_u$. Now, given $R_1$, the conditional distribution of $N$ is Poisson with parameter $R_1 (\ue^v - 1)$. Thus,
\[
\EE\bigl[\ue^{t N}\bigr]
=
\EE\bigl[\ue^{ R_1 (\ue^v-1)(\ue^t-1)}\bigr]
=
\frac{1}{1 - (\ue^v-1)(\ue^t-1)}
\]
when $(\ue^v-1)(\ue^t-1) < 1$. Choosing $t$ so that this product is $1/2$ yields the bound
\[
\PP[N \ge n]
=
\PP[\ue^{tN} \ge \ue^{tn}]
\le
2 (1 - \alpha)^n
<
2 \ue^{-\alpha n}
\]
by Markov's inequality.

By normality, each $k$-face is the intersection of $d+1-k$ cells. If such a face intersects $B_u$, then so does each of the cells whose intersection forms that face. If the number of cells intersecting $B_u$ is $m$, then there are at most $m^s$ intersections of $s$ of them. Therefore, if at least $n$ $k$-faces intersect $B_u$, then at least $n^{1/(d+1-k)}$ cells intersect $B_u${, and so $N \ge n^{1/(d+1-k)}- 1$}. This gives the result desired.
\Qed

We now prove that each cell has a unique end. In fact, we give a quantitative bound on how far the cell of the origin can extend away from the ideal nucleus {$(\Theta_1, R_1)$} in the opposite direction within a spherical cap of angle $\alpha \in (0, 2\pi)$ subtended at $\origin$:
\begin{prop}[\textsc{Tail bound for one end}]\label{p.tailEnd}
Let $u > 0$, $\alpha \in (0, 2\pi)$, $\gamma \coloneqq \bigl((\ue^u - 1) \cos (\alpha/4)\bigr)^{2(d-1)}$ and $\beta \coloneqq (\gamma -1)^{{+}}$. Let $A$ be the portion of the unit sphere intersected by the cone $C$ of opening angle $\alpha \in (0, 2\pi)$ subtended at $\origin$ in the antipodal direction to $\Theta_1$. Write $\alpha'$ for the $(d-1)$-volume of $A$. The chance that the zero cell contains some $z$ at hyperbolic distance $u$ from $\origin$ and within the cone $C$ is at most $1/(1+\alpha'\beta)$. Therefore, all cells have only one end a.s. 
\end{prop}

\begin{figure}[!hbtp]
\centering
\begin{overpic}[width=.5\textwidth]{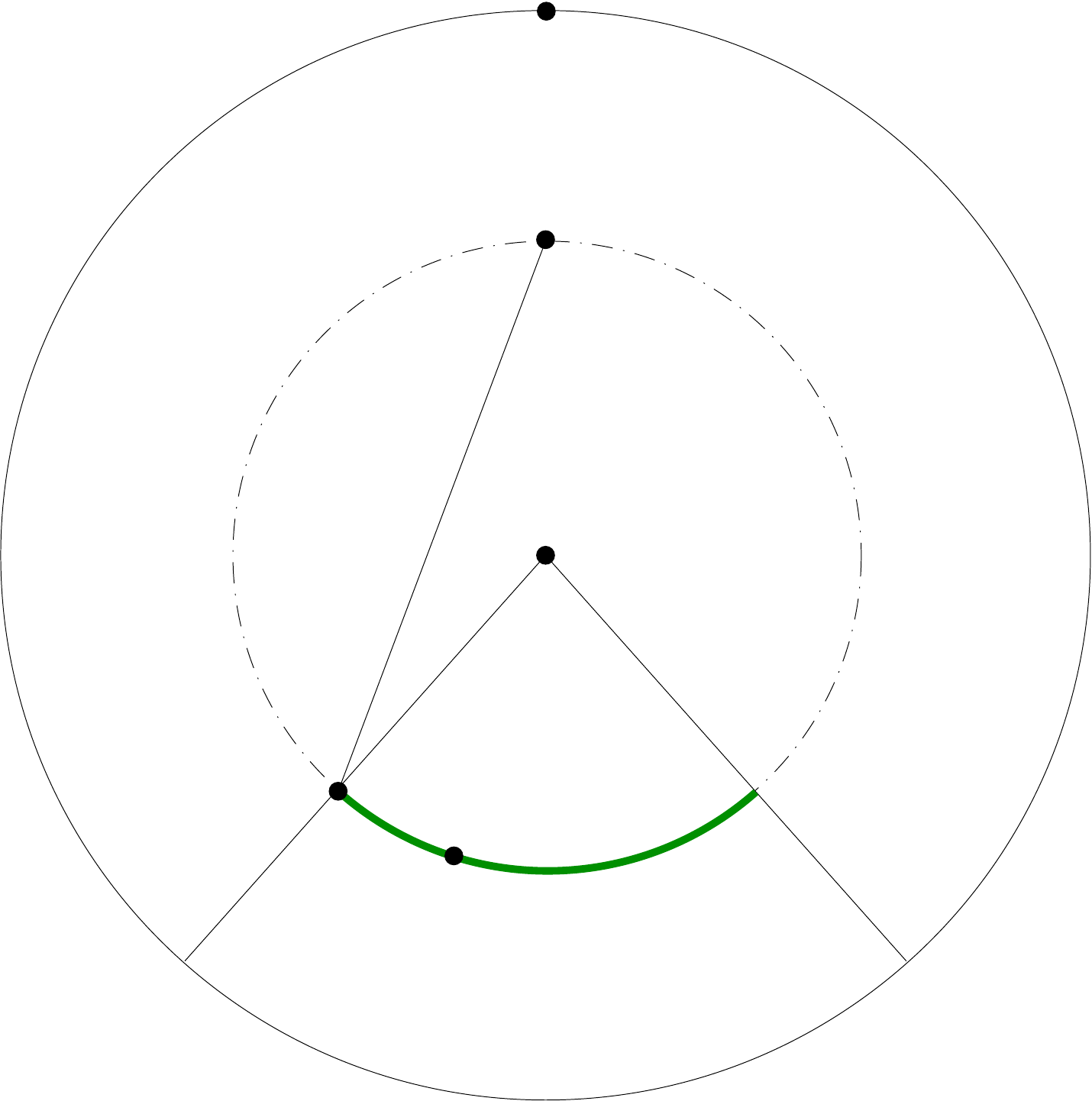}
   \put (48,52) {$\origin$}
   \put (48,102) {$\Theta_1$}
   \put (42,24) {$z$}
   \put (50,16) {$S$}
   \put (48,81) {$y$}
   \put (26,27) {$x$}
   \put (48,40) {$\alpha$}
   \put (63,36) {$a$}
   \put (48,-4) {$A$}
\end{overpic}\\[10pt]
    \caption{We have $|z - \Theta_1| \ge |x - \Theta_1| > |x - y| = 2a \sin \bigl((\pi - \alpha/2)/2\bigr) = 2a \cos(\alpha/4)$.}
    \label{f.tailbound}
\end{figure}

\rproof 
Let $S$ be the intersection of the sphere about $\origin$ of hyperbolic radius $u$ with the cone $C$. For $z \in S$, we have ${R_1|z - \Theta_1|^{2(d-1)} \ge R_1} \bigl(2 a \cos (\alpha/4)\bigr)^{2(d-1)}$, where $a \coloneqq \tanh (u/2)$ is the Euclidean radius of the sphere: see \Cref{f.tailbound}. Note that the closest point $z \in S$ to $\theta \in A$ has Euclidean distance $1-a$ from $\theta$. Therefore, if there is some $z \in S \cap \ocell_d$, then for all $i$ with ${\Theta}_i \in A$, we have by \eqref{eq:closernuclei} and the preceding inequality that
\[
{R_1} \bigl(2 a \cos (\alpha/4)\bigr)^{2(d-1)}
\le
{R_i} (1-a)^{2(d-1)}.
\]
The radii of the PPP on the corona restricted to ideal nuclei with angles in $A$ has intensity $\alpha'$, so the chance, given $R_1$, that the above inequality holds for all $i$ with $\Theta_i \in A$ is $\exp\bigl\{- \alpha' {R_1} \beta\bigr\}$ because $2a/(1-a) = \ue^u - 1$. The unconditional probability that this inequality holds is therefore the bound claimed.

Since this bound tends to 0 as $u \to\infty$, it follows that $\ocell_d$
a.s.\ contains no limit point in $A$. Since $\alpha$ can be chosen as close to $2\pi$ as we wish, we conclude that $\ocell_d$ has only the end ${\Theta}_1$. There are only countably many cells, whence the same holds for all cells.
\Qed

\subsubsection{M\"obius action on the corona via the Poisson kernel}\label{s.mobius}

In \eqref {e.horoaction}, we extended isometries to the extended boundary in a way that preserves separations. 
Converting coordinates to the corona and using \eqref{e.evallim}, we find that an isometry $ \phi \colon \mathbb{B}_d \to  \mathbb{B}_d$ of hyperbolic space acts on the corona preserving separation via
\rlabel e.action
{\phi(\theta, r) \coloneqq \Bigl(\phi(\theta), r/K\bigl(\phi^{-1}(\origin), \theta\bigr)\Bigr).
}

 \begin{lem} \label{lem:mobius} The action of the M\"obius group $ \mob_d$ on the corona $ \widetilde{ \partial \mathbb{B}}_d$ defined by \eqref{e.action} is a transitive group action that leaves the measure $\mu_d $ of \eqref{eq:defmud} invariant. In particular, the law of\/ $ \mathcal{V}_d$ is invariant under isometries.
 \end{lem}
 \begin{proof} This can be checked directly on the corona using the equivariance of harmonic measure (i.e., $\phi_*\bigl(K(z, \cdot) \Unif\bigr) = K\bigl(\phi(z), \cdot\bigr) \Unif$), but let us prove it using  PPPs: 
 Let $ \mathbf{X}^{(\lambda)}$ be a PPP on $ \mathbb{B}_d$ with intensity $\lambda$. Let $\mathbf{R}^{(\lambda)}$ be the corresponding proto-delays, exponentiated as in the coordinates of the corona. It follows from \Cref{cor.thetaD} that $(\mathbf{X}^{(\lambda)}, \mathbf{R}^{(\lambda)})$ converges as $\lambda \to 0$ towards a PPP on the corona with intensity $\mu_d$.  On the other hand, the law of $(\mathbf{X}^{(\lambda)}, \mathbf{R}^{(\lambda)})$ is invariant under isometries. It follows that $ \mob_d$ acts in a way that leaves the measure $\mu_d$ of \eqref{eq:defmud} invariant. It is easy to check that this action is indeed transitive. \end{proof}
 
The measure $\mu_d$ is therefore the Haar measure on $\corona$ for the action of $ \mob_d$. Also, the subgroup of $\mob_d$ that stabilizes $(\theta, r)$ is the subgroup that fixes the horosphere at $\theta$ that passes through the origin (no matter the value of $r$).

\begin{proofof}{\cref{thm:decomposition}}
 By \cref{p.tailEnd}, the cells are unbounded with one end a.s. Since the ends are distinct, it follows that all lower-dimensional faces are bounded a.s. The remaining properties were proved in
\cref{p.locfinite} and \cref{lem:mobius}.
\end{proofof}

The \dfn{separation field}, whose value at $z$ is the minimum separation between $z$ and all ideal nuclei, determines the ideal Voronoi and Delaunay tessellations. See \Cref{f.field} for a sample. 
A measurable map that intertwines two actions of a given group $\Gamma$ is called $\Gamma$-equivariant. When the actions preserve probability measures, the second action is then called a $\Gamma$-equivariant factor of the first. When the map is invertible with measurable inverse, then the two actions are called $\Gamma$-equivariantly isomorphic.

\begin{figure}[htp] 
    \centering
\includegraphics[width=.7\textwidth]{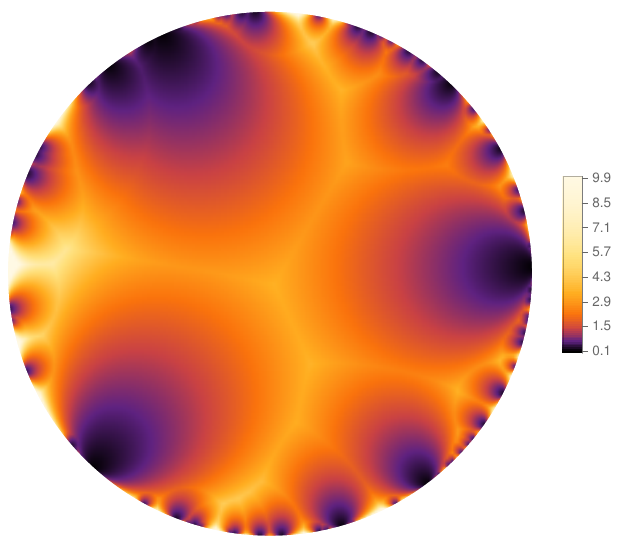}
\caption{Part of the separation field in $\HH_2$. The boundaries of the cells of the ideal Voronoi tessellation are formed by the singular points of the field (where it is not differentiable).} %pub%The separation was truncated at 10 for the color scheme.
		\label{f.field}
\end{figure}

Our next result uses \cref{p.tailEnd}, which proved the first bullet point of \cref{thm:decomposition}.

\begin{theorem}[\textsc{Isomorphisms and factors}]  \label{t.factor}
The Poisson point process of ideal nuclei on the corona, the separation field, and the IPVT are all $\mob_d$-equivariantly isomorphic. 
\end{theorem}

\rproof
It suffices to define measurable, equivariant maps $\fNS$, $\fSV$, and $\fVN$ from the ideal nuclei to the separation field, from the separation field to the Voronoi tessellation, and from the Voronoi tessellation to the ideal nuclei, respectively, such that $\fVN \circ \fSV \circ \fNS$ is the identity a.s. In fact, we need not prove that $\fVN$ is equivariant, because the composition being the identity forces it to be so.

For a simple counting measure $N$ on $\corona$ with radii tending to infinity, let $\fNS(N)$ be the separation field of $N$, i.e., $\fNS(N)(z) \coloneqq \min \{ r/K(z, \theta) \sut N(\theta, r) = 1\}$. Because $\mob_d$ preserves separations \eqref{e.preserve-sep}, $\fNS\bigl(\phi_*(N)\bigr)(z) = \fNS(N)\bigl(\phi^{-1}(z)\bigr)$ for all $\phi\in \mob_d$ and $z \in \HH_d$, i.e., $\fNS\bigl(\phi_*(N)\bigr) = \phi\bigl(\fNS(N)\bigr)$, which is equivariance. This shows that the separation field is a factor of the ideal nuclei via $\fNS$.

The next step is trivial: \cref{d.IVT} defines the map $\fSV$, which \eqref{e.preserve-sep} shows is equivariant.

Finally, consider $\cells_d$. Each cell has one end a.s.\ by \cref{p.tailEnd}, and that end is the angle of its ideal nucleus, so defining the set of angles of $\fVN(\cells_d)$ is easy. Furthermore, we know the angle $\theta_1$ of the ideal nucleus of the cell of the origin. 
By \cref{lem:evaldist}, we have that for $z$ on the boundary of two neighboring cells with ideal nuclei $(\theta, r)$ and $(\theta', r')$,
\[
\frac{r}{r'}
=
\frac{K(z, \theta)}{K(z, \theta')}.
\]
It follows that $\cells_d$ determines all such quotients and thus the set $\{r_i/r_1 \ST i \ge 1\}$ (we may index $r_i$ in increasing order).
We determine $r_1$ by $1/r_1 = \lim_{i \to\infty} \frac{r_i/r_1}i$ a.s. This defines $\fVN$ and shows that the composition of all three maps is the identity a.s. 
\Qed

Because every factor of an ergodic process is itself ergodic, it follows that the IPVT is ergodic.

\subsection{Delaunay tessellations}\label{s.delaunay}

Intimately tied with Voronoi tessellations are their dual, Delaunay tessellations. Given a locally finite set $X$ of nuclei in $\HH_d$ whose convex hull is all of $\HH_d$, let $V$ be the corresponding Voronoi tessellation. For each vertex $v$ of a cell in $V$, the dual Delaunay cell is defined as the convex hull of those nuclei whose Voronoi cell contains $v$. Note that the nuclei just mentioned lie on the boundary of a ball whose interior is disjoint from $X$. The collection of all Delaunay cells defines the \dfn{Delaunay tessellation}. This is a face-to-face tessellation: a slight modification of the words in the proof of \cite[Theorem 10.2.6]{SchneiderWeil} for the Euclidean case works in our case. 
When $V$ is a normal tessellation, all Delaunay cells are simplices of dimension $d$ and so all their faces are simplices as well.

If $V$ is an ideal Voronoi tessellation, we similarly define dual ideal Delaunay cells. These are ideal simplices with $d+1$ ideal angles for vertices when $V$ is normal. We now show that when $V = \cells_d$, the dual ideal Delaunay cells a.s.\ form a face-to-face simplicial tessellation, which we call the \dfn{ideal Poisson--Delaunay tessellation} (IPDT) and denote by $\dcells_d$. We also show that $\dcells_d$ is the limit of the Delaunay tessellations $\dcells_d^{(\lambda)}$ formed by PPPs of intensity $\lambda > 0$ in $\HH_d$: see \Cref{f.successiveDel}.

\begin{figure}[!h]
 \begin{center}
 \includegraphics[height=5cm]{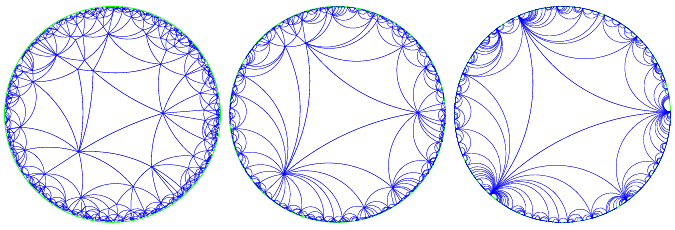}
 \caption{Left to right: Poisson--Delaunay tessellations of the hyperbolic plane (in the unit disk model) with decreasing intensity coupled via dilations using 1000 nuclei. Their limit (on the right) is $ \mathcal{D}_2$, the  \emph{ideal Poisson--Delaunay tessellation} of the hyperbolic plane.}\label{f.successiveDel}
 \end{center}
 \end{figure}

\begin{prop}[\textsc{Dilations for Delaunay}]\label{p.dilateDel}
Adopt the notation of \cref{remark_dilation}. Let $\dcells_d^{(\lambda)}$ be the Delaunay tessellation corresponding to $\tilde\XX^{(\lambda)}$ and $\dcells_d$ be the Delaunay cells associated to $\cells_d$. Then as $\lambda \to 0$, a.s.\ $\dcells_d^{(\lambda)}$ converges to $\dcells_d$ in the sense that each cell of $\dcells_d$ is a limit in the Fell topology of cells of $\dcells_d^{(\lambda)}$.
\end{prop}

The same holds for the dilations in the upper half-space model, \hbox{\cref{l.UHSdilate}}.

\rproof
By \cref{remark_dilation}, a.s.\ each ideal Voronoi vertex, $v$, is a limit of Voronoi vertices $v^{(\lambda)}$ of $\cells_d^{(\lambda)}$. Now $v$ a.s.\ has exactly $d+1$ ideal nuclei at smallest separation, $s$, by \cref{p.locfinite}, and the other ideal nuclei have separations from $v$ that do not cluster at $s$. Therefore, when $\lambda$ is sufficiently small,  $v^{(\lambda)}$ also have closest nuclei that approach these $d+1$ ideal nuclei, which means that the Delaunay simplices corresponding to $v^{(\lambda)}$ also converge to the ideal Delaunay simplex corresponding to $v$. Hence, $\dcells_d^{(\lambda)}$ a.s.\ converges to $\dcells_d$.
\Qed

We will use the results and observations of \cref{s.zerocell} for this next result, but the reasoning will not be circular.

\begin{prop}[\textsc{Basic properties of the IPDT}]
The ideal Poisson--Delaunay tessellation, $\dcells_d$, is a.s.\ a face-to-face tessellation. 
\end{prop}

\rproof
Since each ideal Delaunay cell is an ideal simplex, it has nonempty interior.
In \cref{s.zerocell}, we consider the zero cell in the upper half-space model with its ideal nucleus' angle at $\infty$. In \cref{s.asymp0cell}, we explain how the zero cell is closely related to the Laguerre diagrams studied in \cite{gusakova2022delaunay} for certain choices of parameters there. In particular, a.s.\ each set of $d$ ideal nuclei in $\R^{d-1}$ that form an ideal Delaunay simplex when joined with the ideal nucleus at $\infty$ gives a Euclidean simplex in $\R^{d-1}$ that is one of the so-called $\beta'$-Delaunay simplices, and conversely. These $\beta'$-Delaunay simplices tessellate $\R^{d-1}$ \cite[p.~1263]{gusakova2022delaunay}.  It follows that the set of ideal Delaunay simplices containing $\infty$ is locally finite a.s.\ and  each ideal Delaunay simplex containing $\infty$ has the property that each of its $(d-1)$-faces containing $\infty$ is a face of another such Delaunay simplex. The same then holds a.s.\ for every ideal nucleus, whence $\dcells_d$ is locally finite and covers $\HH_d$ a.s.
\Qed

There is an equivariant, measurable bijection between the $k$-faces of $\cells_d$ and the $(d-k)$-faces of $\dcells_d$ (and similarly for the positive-intensity tessellations), known as \dfn{duality}: the $k$-face of $\cells_d$ given by the intersection of the cells corresponding to ideal nuclei $Y_1, \ldots, Y_{d+1-k}$ is mapped to the $(d-k)$-face of $\dcells_d$ given by the convex hull of $Y_1, \ldots, Y_{d+1-k}$.

\begin{remark}
It follows from \cref{t.factor} that the ideal Delaunay tessellation is a $\mob_d$-factor of the point process of ideal nuclei.
\end{remark}

\Cref{f.opera} gives a side view of some of the 2-faces belonging to the ideal Delaunay tetrahedra that have one of their vertices at infinity in the upper halfspace model of $\HH_3$. The 2-faces containing $\infty$ are not shown.

\begin{figure}[htp] 
\centering
\includegraphics[width=\textwidth]{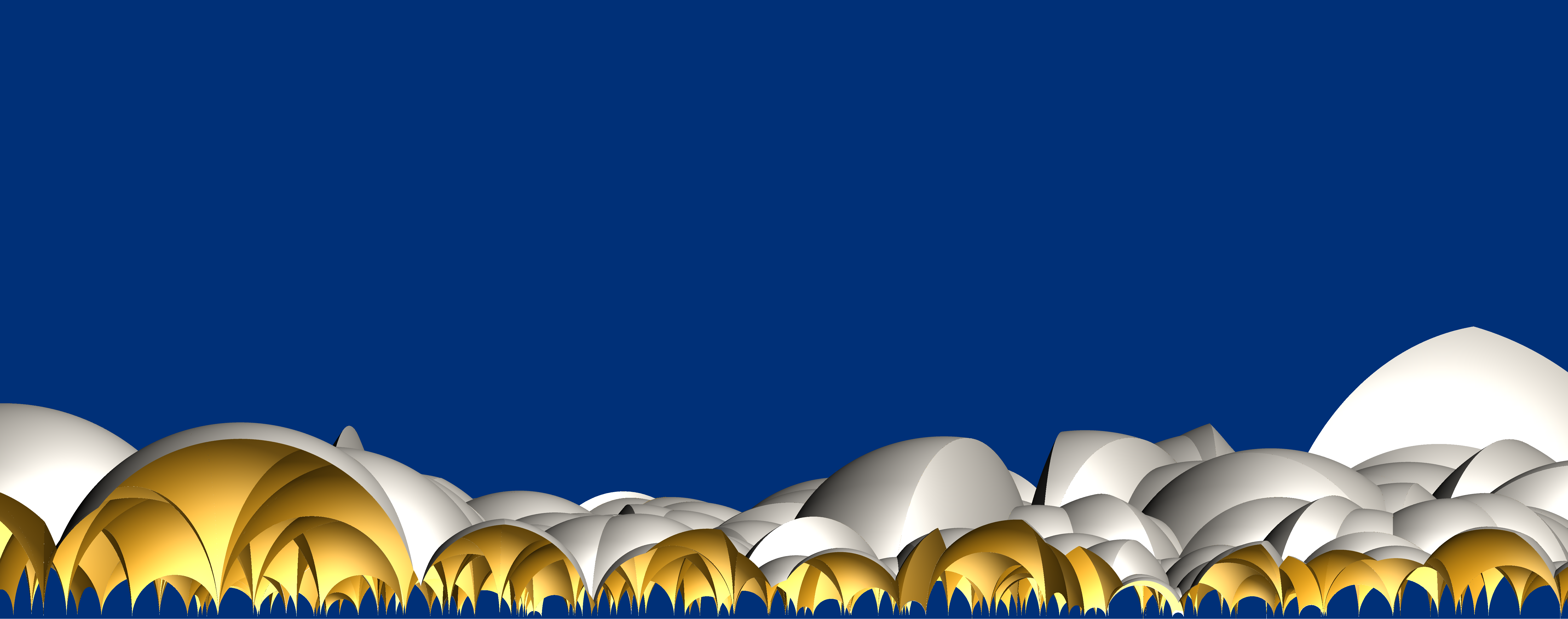}
\caption{A side view of some of the 2-faces belonging to the ideal Delaunay tetrahedra that have one of their vertices at infinity in the upper halfspace model of $\HH_3$. Shown is the portion over a $6 \times 6$ square in the ideal boundary.}
		\label{f.opera}
\end{figure}

\section{Face intensities} \label{s:intensities}
Since $ \mathcal{V}_{d}$ is invariant under isometries of $ \mathbb{H}_{d}$, we can define the intensities of various geometric quantities such as $k$-faces. Using existing results in stochastic geometry, many of those quantities can be explicitly computed for the ideal Poisson--Voronoi tessellations. Let us first recall the classical construction of the Palm distribution for general stationary point processes.

\subsection{Palm measures and typical points}\label{s.palm}
\subsubsection{Euclidean space}
Recall that a point process on $\mathbb{R}^d$ can be seen as an (atomic) measure on the Borel sets $\borel(\mathbb{R}^d)$ of $\R^d$, and that the set $\mathrm{Mes}(\mathbb{R}^d)$ of measures on $\mathbb{R}^d$ can be endowed with the smallest $\sigma$-algebra $\Sigma$ that renders evaluation functionals on Borel sets measurable. Given a random point process $\mathbf{X}$ (hence a probability measure on $\mathrm{Mes}(\mathbb{R}^d$)), one can define a (possibly infinite) measure $\mu$ on the space  $\mathbb{R}^d \times \mathrm{Mes}(\mathbb{R}^d)$ endowed with the product sigma-field $\borel(\R^d) \times \Sigma$ by the formula $\mu(B) \coloneqq \EE\int_{\R^d} \II{B}(x, \XX - x) \,\ud\XX(x)$ for $B \in \borel(\R^d) \times \Sigma$. The intuitive meaning of $\mu$ is that it gives the joint ``distribution'' of a point of the process together with how the other points look relative to that point. When the law of $\mathbf{X}$ is invariant under translation, $A \mapsto \EE\bigl[\XX(A)\bigr]$ is a translation-invariant map on Borel subsets of $\R^d$, whence is a multiple of Lebesgue measure; that constant multiplier is called the \dfn{intensity} $\mathrm I_{\XX}$ of $\XX$. Similarly in this case, $\mu$ is invariant under translations in the first coordinate, that is, $\mu(B_1 \times B_2) = \mu\bigl((B_1 - x) \times B_2\bigr)$ for Borel $B_1 \subseteq \R^d$, measurable $B_2 \in \Sigma$, and $x \in \R^d$. Hence, if $\Intn_{\XX}$ is positive and finite, then $\mu = \mathrm I_{\XX} \cdot \Leb \otimes \PP^\origin$ for a unique probability measure $\PP^\origin$ on measures on $\R^d$ \cite[Theorem 3.3.1]{SchneiderWeil}. For example, $\PP^\origin(B_2) = \Intn_{\XX}^{-1} \cdot \EE \int_{x \in B_1} \II{B_2}(\XX - x) \,\ud\XX(x)$ for every $B_1$ with $\Leb(B_1) = 1$. The measure $\PP^\origin$ is called the \dfn{Palm distribution} of $\XX$. 
When $\XX$ is a point process, $\PP^\origin$ has one of its points at $\origin$ a.s., $\origin$ then being referred to as a \dfn{typical point} of $\XX$. 
When $\XX$ is a PPP, Slivnyak's theorem \cite[Theorem 3.3.5]{SchneiderWeil} shows that the $\PP^\origin$-law of $\XX$ equals the $\PP$-law of $\XX + \delta_\origin$.
Let $\EE^{\origin}$ denote integration with respect to $\PP^{\origin}$.
The definition of $\mu$ tells us that to integrate a nonnegative, measurable function $f$ with respect to $\mu$, we merely replace the indicator $\II{B}$ by $f$, which is the content of the \dfn{refined Campbell theorem}: $\Intn_{\XX} \cdot \EE^{\origin} \int_{\R^d} f(x, \XX) \,\ud x = \EE\int_{\R^d} f(x, \XX - x) \,\ud\XX(x)$ \cite[Theorem 3.3.3]{SchneiderWeil}.

\subsubsection{Hyperbolic space}\label{s.palm-hyp}
Similar definitions hold for a random measure on a group whose law is invariant under translations. In the case of a random measure $\XX$ on a homogeneous space like $\HH_d$, a slightly more complicated definition of Palm measure is needed~\cite{Last}. Assume that the law of $\XX$ is invariant under $\mob_d$. Then $A \mapsto \EE\bigl[\XX(A)\bigr]$ is an isometry-invariant map on Borel subsets of $\HH_d$, whence is a multiple of $\Vol_{\HH_d}$; that multiplier is called the \dfn{intensity} $\mathrm I_{\XX}$ of $\XX$. Let $\kappa$ be the probability Haar measure on the isotropy (stabilizer) subgroup $\mob_d^{\origin}$ of $\origin$. 
Choose maps $x \mapsto \phi_x$ from $\HH_d \to \mob_d$ such that $\phi_x(\origin) = x$. Write $\mob_d^x \coloneqq \{\phi \in \mob_d \sut \phi(\origin) = x\}$ and $\kappa_x \coloneqq \kappa \circ \phi_x^{-1}$; the measure $\kappa_x$ does not depend on the choice of $\phi_x$. Define $\mu(B) \coloneqq \EE\int_{\HH_d} \int_{\mob_d^{x}} \II{B}(x, \XX \circ \phi) \,\ud\kappa_x(\phi) \,\ud\XX(x)$ for $B$ in the product $\sigma$-field generated by Borel sets in $\HH_d$ and the evaluation functionals on Borel sets. If $\Intn_{\XX}$ is positive and finite, then $\mu = \mathrm I_{\XX} \cdot \Vol_{\HH_d} \otimes \PP^\origin$ for a unique probability measure $\PP^\origin$ on measures on $\HH_d$. 
The measure $\PP^\origin$ is called the \dfn{Palm distribution} of $\XX$. 
{When $\XX$ is a point process, $\PP^\origin$} has one of its points at $\origin$ a.s., $\origin$ then being referred to as a \dfn{typical point} of $\XX$. 
The refined Campbell theorem says that $\Intn_{\XX} \cdot \EE^{\origin} \int_{\HH_d}  f(x, \XX ) \,\ud\Vol_{\HH_d}(x) = \EE\int_{\HH_d} \int_{\mob_d^{x}} f(x, \XX \circ \phi) \,\ud\kappa_x(\phi) \,\ud\XX(x)$ for nonnegative, measurable functions, $f$. 
When $\XX$ is a PPP, we again have a Slivnyak-type theorem: the $\PP^\origin$-law of $\XX$ equals the $\PP$-law of $\XX + \delta_\origin$. This is proved as follows.
Let $\XX$ be a PPP with intensity $\lambda$. By Mecke's theorem \cite[Theorem 3.2.5]{SchneiderWeil}, we have that for all nonnegative, measurable $g$,
\rlabel e.mecke
{\EE \int_{\HH_d} g(x, \XX) \,\ud\XX(x)
=
\lambda \int_{\HH_d} \EE\bigl[g(x, \XX + \delta_x)\bigr] \,\ud\Vol_{\HH_d}(x).
}
Choose $B_1 \subset \HH_d$ with $\Vol_{\HH_d}(B_1) = 1$ and a Borel set $B_2$ of measures on $\HH_d$. Consider
\[
g(x, \XX) \coloneqq \int_{\mob_d^{x}} \II{B_1}(x) \II{B_2}(\XX \circ \phi) \,\ud\kappa_x(\phi).
\]
By the refined Campbell theorem,
\rlabel e.rC
{\EE \int_{\HH_d} g(x, \XX) \,\ud\XX(x)
=
\lambda \, \EE^\origin \int_{\HH_d} \II{B_1}(x) \II{B_2}(\XX) \,\ud\Vol_{\HH_d}(x)
=
\lambda\, \PP^\origin[\XX \in B_2].
}
On the other hand, because $(\XX + \delta_x) \circ \phi = \XX \circ \phi + \delta_\origin$ for all $\phi\in \mob_d^x$,
\begin{align}\label{e.mbinv}
\lambda \int_{\HH_d} \EE\bigl[g(x, \XX + \delta_x)\bigr] \,\ud\Vol_{\HH_d}(x) 
&=
\lambda \int_{\HH_d} \int_{\mob_d^x} \II{B_1}(x)\, \EE\II{B_2}(\XX \circ \phi+\delta_\origin) \,\ud\kappa_x(\phi)\,\ud\Vol_{\HH_d}(x) \notag
\\ &=
\lambda \int_{\HH_d} \int_{\mob_d^x} \II{B_1}(x)\, \EE\II{B_2}(\XX +\delta_\origin) \,\ud\kappa_x(\phi)\,\ud\Vol_{\HH_d}(x), \notag
\\ &=
\lambda\,\PP[\XX + \delta_\origin \in B_2],
\end{align}
where, in the second step, we used the $\mob_d$-invariance of the $\PP$-law of $\XX$. Comparing \eqref{e.mecke}, \eqref{e.rC}, and \eqref{e.mbinv} proves our claim.

\subsubsection{Marked random measures}\label{s.marks}
We will be especially interested in the Palm distribution for marked
random measures, with marks being polytopes. Let $E$ be either $\R^d$ or $\HH_d$.  Although we can use any measurable space for the space of marks \cite[Remark 3.9]{Last}, we will use 
the metrizable space $M$, which is the set of closed subsets of $E$ with the Fell topology. 
A \dfn{marked random measure} $\XX$ on $E$ with marks in $M$ is a random measure on $E \times M$ that projects to a random measure on $E$.
We say that the law of $\XX$ is invariant if it is invariant by the group of translations or isometries in the first coordinate, that is, if $\PP[\XX \in A \times B]$ does not change under the group action on $A$ for each fixed $B$, where $A \subseteq E$ and $B \subseteq M$ are Borel. In this case, the preceding paragraphs with virtually no modification give the Palm distribution of $\XX$, which{, when the projection of $\XX$ to $E$ is a point process, is concentrated on}  points of the form $(\origin, m)$, and also give the refined Campbell theorem. In this point-process case, we also call the (random) mark $m$ of $\origin$ a \dfn{typical mark}.

\subsubsection{Tessellations}\label{s.palm-tess}
Let us apply the preceding paragraphs to our setting. Recall that we denote by $ \mathcal{V}_d^{( \lambda)}$ the Poisson--Voronoi tessellation on $ \mathbb{H}_d$ with positive intensity $\lambda >0$ and by $ \mathcal{V}_d$ our limiting ideal Poisson--Voronoi tessellation. For fixed $0 \leq k \leq d-1$, to each $k$-dimensional face $ \mathfrak{s}$ of $ \mathcal{V}_{d}$ or of $\cells_d^{(\lambda)}$ (we speak
of vertices when $k=0$ and edges when $k=1$), we associate a point $\mathrm{Cen}( \mathfrak{s}) \in \mathfrak{s}$, called the  \dfn{center}, in an isometry-equivariant, measurable way (i.e., $ \mathrm{Cen} \bigl( \phi (\mathfrak{s})\bigr) = \phi\bigl(  \mathrm{Cen}( \mathfrak{s})\bigr)$ for every isometry, $\phi$). It will be convenient to assume that $\Cen(\mathfrak s)$ lies in the relative interior of $\mathfrak s$, which implies that $\Cen$ is injective on the set of $k$-faces. The centroid \cite[p.~222]{ratcliffe} is such a center. We will also have use for the \dfn{measure centroid}, defined like the centroid using the hyperboloid model \cite[p.~59]{ratcliffe}, but integrating with respect to $\nu_{ \mathfrak{s}}$ rather than with respect to $\sum_{x \in \mathfrak{s} \cap \cells_{d, 0}} \delta_x$, where $\nu_{ \mathfrak{s}}$ is the $k$-dimensional hyperbolic measure on $ \mathfrak{s}$.
Thus, the centers of the $k$-faces of a Poisson--Voronoi tessellation allow us to define the invariant marked random measure corresponding to $\cells_{d, k}$ that was described in \Cref{s.marks} as
%\vadjust{\kern2pt}%
\begin{equation} \label{markedX}
\XX_{d,k} \colon A \mapsto \sum_{ \begin{subarray}{c} \mathfrak{s} \in k\textrm{-faces} \end{subarray}} \int_{\mob_d^{\Cen(\mathfrak s)}} \II{A}\bigl(\mathrm{Cen}( \mathfrak{s}), \phi^{-1}(\mathfrak s)\bigr) \,\ud\kappa_{\Cen(\mathfrak s)}\end{equation}
 for Borel $A \subseteq E \times M$, where $\kappa_x$ is as above \cite[Section 5.4]{Herold:thesis}. 
%pub%(This centering of the faces is needed to achieve invariance in the first coordinate.) 
A typical mark is now called a \dfn{typical $k$-face}. This applies to $k = d$ as well when $\lambda > 0$, since the cells have finite volume, making it possible to define a center; in this case, we refer to a \dfn{typical cell}. When $k = 0$, we speak of a typical vertex; in this case, the origin may be thought of as conditioned to be a Voronoi vertex.

Similar considerations apply for the faces of the (possibly ideal) Delaunay tessellations $\dcells_d^{(\lambda)}$ or $\dcells_d$ and for the Delaunay simplices themselves. Here, it may be necessary to use \dfn{(generalized) centers} of the $k$-simplices, where we no longer require that the generalized center of a $k$-simplex lie in that simplex or we use extra randomness. In our case, we may use the randomness of the PPP of (ideal) nuclei. In fact, for the generalized center of a Delaunay $k$-simplex, we will use the center of the corresponding $(d-k)$-face of the Voronoi tessellation. Here, $1 \le k \le d$, although in the case of positive-intensity nuclei, rather than ideal nuclei, we may allow $k = 0$. The only centers we will use for the ideal Delaunay tessellation are the generalized centers of the ideal Delaunay simplices, which are, then, the corresponding Voronoi vertices.

\subsection{Intensities and ergodic properties}  Let $\cells_{d, k}$ denote the collection of $k$-faces of cells of $\cells_d$; in particular, $\cells_{d, d} = \cells_d$.  Consider the random measures
\begin{equation} \label{def_random_measures}
\mu_{d,k} \coloneqq \sum_{ \begin{subarray}{c} \mathfrak{s} \in {\cells_{d, k}} \end{subarray}} \delta_{ \mathrm{Cen}( \mathfrak{s})} \quad \mbox{and} \quad \tilde{\mu}_{d,k}\coloneqq \sum_{ \begin{subarray}{c} \mathfrak{s} \in {\cells_{d, k}} \end{subarray}}\nu_{ \mathfrak{s}}.
\end{equation}
Of course, $\mu_{d, 0} = \tilde \mu_{d, 0}$.
Since $ \mathcal{V}_{d}$ is isometry invariant in law (Lemma \ref{lem:mobius}), so are $\mu_{d,k}$ and $ \tilde{\mu}_{d,k}$, whence the mean measures (their expectations) are  multiples of the hyperbolic measure on $ \mathbb{H}_{d}$. We thus set 
  \begin{eqnarray} \label{eq:defintensities}  \mathbb{E}[\mu_{d,k}] =  \mathrm{I}_{d,k} \cdot \mathrm{Vol}_{ \mathbb{H}_{d}} \quad \mbox{and} \quad \mathbb{E}[\tilde{\mu}_{d,k}] =  \tilde{\mathrm{I}}_{d,k} \cdot \mathrm{Vol}_{ \mathbb{H}_{d}} \; . \end{eqnarray}
We call the constant $ \mathrm{I}_{d,k} \in [0, \infty]$ (resp., $ \tilde{ \mathrm{I}}_{d,k}$), the \dfn{counting  $k$-face intensity}
(resp.,~\dfn{measure  $k$-face intensity}) of $ \mathcal{V}_{d}$. The latter intensity is called by \cite[p.~141]{SchneiderWeil} the \dfn{$k$-volume
density} or \dfn{specific $k$-volume}. %pub%; see Theorem 4.5.1 on p.~142.

{
The counting intensities do not depend on the choice of center function. To see this, let $\Cen$ and $\Cen'$ be two center functions. Denote by $\mu_{d, k}$ and $\mu'_{d, k}$ the corresponding random measures as in \eqref{def_random_measures}.
Define the measure
\[
\widehat\mu_{d,k} \coloneqq \sum_{ \mathfrak{s} \in \cells_{d, k} } \delta_{ \mathrm{Cen}( \mathfrak{s})} \otimes \delta_{\Cen'( \mathfrak{s})}
\]
on $\HH_d \times \HH_d$. For Borel $A \subseteq \HH_d$, we have 
\[
\widehat\mu_{d,k}(A \times \HH_d) = \mu_{d,k}(A) \quad \text{and}  \quad
\widehat\mu_{d,k}(\HH_d \times A) = \mu'_{d,k}(A).
\]
The measure $\widehat\mu_{d,k}$ is equivariant under the diagonal action of $\mob_d$ and the law of $\cells_{d, k}$ is invariant under the action of $\mob_d$, whence the
mean measure $\EE[\widehat\mu_{d,k}]$ is invariant under the diagonal action of $\mob_d$.
By unimodularity of $\mob_d$, the mass-transport principle \cite[Theorem 8.47]{LP:book} yields
\[
\EE\bigl[\widehat\mu_{d,k}(A \times \HH_d)\bigr] =
\EE\bigl[\widehat\mu_{d,k}(\HH_d \times A)\bigr].
\]
This gives our claim.
}

These intensities can naturally be defined for the Poisson--Voronoi tessellation $\cells_d^{(\lambda)}$ with positive intensity $\lambda >0$, which we denote by
$\mathrm{I}_{d,k}^{( \lambda)}$ and $ \tilde{\mathrm{I}}^{( \lambda)}_{d,k}$ for $0 \leq k \leq d$. 
{By choosing the center of a cell to be its nucleus, we obtain $\Intn_{d,d}^{(\lambda)} = \lambda$. The definition gives $\tilde\Intn_{d,d}^{(\lambda)} = 1$. Similar intensities can be defined for the (ideal) Poisson--Delaunay tessellation.}

As one would expect, the intensities for $\lambda > 0$ converge to the corresponding intensities of the ideal tessellation as $\lambda \to 0$:
\begin{lem}[{\textsc{Convergence of $k$-face intensities}}] \label{lem:convintensities}For $0 \leq k \leq d-1$, we have 
$$ \mathrm{I}_{d,k}^{(\lambda)} \xrightarrow[\lambda \to 0]{} \mathrm{I}_{d,k} \in (0, \infty) \quad \mbox{ and } \quad \tilde{\mathrm{I}}_{d,k}^{(\lambda)} \xrightarrow[\lambda \to 0]{} \tilde{\mathrm{I}}_{d,k} \in (0, \infty).$$
\end{lem}
\proof Let $B_u = B_u(\origin)$ be the ball of hyperbolic radius $u \geq 0$ around the origin in $ \mathbb{H}_{d}$, so that 
  \begin{equation}\label{eq:espintensity} \mathrm{I}_{d,k} =  \frac{\mathbb{E}[ \mu_{d,k}(B_{u})]}{ \mathrm{Vol}_{ \mathbb{H}_{d}}(B_{u})} \end{equation} for $u > 0$, with a similar formula for positive intensities $\lambda >0$. As the center of a face, choose its measure centroid, which is continuous with respect to the Fell topology when restricted to compact, nonempty polytopes. Since $ \mathcal{V}_{d}^{(\lambda)} \to \mathcal{V}_{d}$ by~\cref{thm.convhyp}, we have, with obvious notation, that $\mu_{d,k}^{(\lambda)}(B_{u}) \to \mu_{d,k}(B_u)$ in distribution as $\lambda \to 0$. By \eqref{eq:espintensity}, it thus suffices to prove that $( \mu_{d,k}^{(\lambda)}(B_{u}) \sut 0 < \lambda < 1)$ is uniformly integrable to deduce convergence of the expectations from the convergence in law. Define $N^{(\lambda)}_{d,k}(B_u)$ to be the number of $k$-faces of $\cells_d ^{(\lambda)}$ that intersect $B_u$.  Adapting \cref{p.tailbound} for positive intensity $\lambda>0$, we obtain again a (uniform in $\lambda \in (0, 1)$) stretched exponential bound on the tail probabilities of $N^{(\lambda)}_{d,k}(B_u)$, which yields the desired uniform integrability because $\mu^{(\lambda)}_{d,k}(B_u) \le N^{(\lambda)}_{d,k}(B_u)$.  %pub%We chose the centers to lie in the faces!

Since $\tilde \mu^{(\lambda)}_{d,k}(B_u) \le \Vol_{\HH_k}(B^{(k)}_u)\cdot  N^{(\lambda)}_{d,k}(B_u)$, where $B^{(k)}_u$ is a ball of radius $u$ in $\HH_k$, a similar proof holds for the convergence of $\tilde{\mathrm{I}}_{d,k}^{(\lambda)}$.  
\endproof

These face intensities are directly related to aspects of typical faces and cells as shown in the following two propositions.

\begin{prop}[\textsc{Intensities and typical volumes}]\label{p.typ-vol}
For $0 \le k \le d-1$, the quotient $\tilde{\Intn}_{d, k}/\Intn_{d, k}$ equals the expected $k$-volume of the typical $k$-face of\/ $\cells_d$. 
{For $\lambda > 0$ and $0 \le k \le d$, the quotient $\tilde{\Intn}_{d, k}^{(\lambda)}/\Intn_{d, k}^{(\lambda)}$ equals the expected $k$-volume of the typical $k$-face of\/ $\cells_d^{(\lambda)}$.}
\end{prop}

\rproof
Define the measure
\[
\widehat\mu_{d,k} \coloneqq \sum_{ \begin{subarray}{c} \mathfrak{s} \in {\cells_{d, k}}\end{subarray}} \delta_{ \mathrm{Cen}( \mathfrak{s})} \otimes \nu_{ \mathfrak{s}}
\]
on $\HH_d \times \HH_d$. Write $\| \nu_{ \mathfrak{s}}\|$ for the total $k$-volume of $\mathfrak s$. For Borel $A \subseteq \HH_d$, we have 
\[
\widehat\mu_{d,k}(\HH_d \times A) = \tilde\mu_{d,k}(A) \quad \text{and}  \quad
\widehat\mu_{d,k}(A \times \HH_d) = \sum_{ \begin{subarray}{c} \mathfrak{s} \in {\cells_{d, k}},\\ {\Cen(\mathfrak s) \in A} \end{subarray}}\|\nu_{ \mathfrak{s}}\|.
\]
The mean measure $\EE[\widehat\mu_{d,k}]$ is invariant under the diagonal action of $\mob_d${, whence}
the mass-transport principle yields
\rlabel e.mtp1
{\EE\bigl[\widehat\mu_{d,k}(\HH_d \times A)\bigr] = 
\EE\bigl[\widehat\mu_{d,k}(A \times \HH_d)\bigr].
}
The left-hand side equals $\tilde{\Intn}_{d, k} \cdot \Vol_{\HH_d}(A)$, while the right-hand side equals $\EE\Bigl[\sum_{ \begin{subarray}{c} \mathfrak{s} \in {\cells_{d, k}},\\ \Cen(\mathfrak s) \in A \end{subarray}}\|\nu_{ \mathfrak{s}}\|\Bigr]$. Let $\XX_{d, k}$ denote the marked random measure corresponding to $\cells_{d, k}$ defined in \cref{markedX}. Using the function 
\[
f\bigl(\Cen(\mathfrak s), \XX_{d, k}\bigr)
\coloneqq
\rcases{
\II{A}\bigl(\Cen(\mathfrak s)\bigr) \|\nu_{\mathfrak t}\| &\text{if } \mathfrak t \text{ is the unique $k$-face with } \Cen(\mathfrak t) = \origin\\
0 &\text{otherwise}
}
\]
in the refined Campbell theorem, we obtain that $\EE\Bigl[\sum_{ \begin{subarray}{c} \mathfrak{s} \in {\cells_{d, k}},\\ \Cen(\mathfrak s) \in A \end{subarray}}\|\nu_{ \mathfrak{s}}\|\Bigr]$ equals the expected $k$-volume of the typical $k$-face times $\Intn_{d,k} \cdot \Vol_{\HH_d}(A)$. Hence \eqref{e.mtp1} gives our claim.

{A similar proof gives the result for positive $\lambda$.}
\Qed

\begin{prop}[\textsc{Typical face vectors and volumes}]\label{p.typ-face}
For $\lambda > 0$ and $0 \le k \le d-1$, the mean number (resp., $k$-volume) of $k$-faces of\/ the typical cell of $\cells_d^{(\lambda)}$ is $(d+1-k) \Intn_{d, k}^{(\lambda)}/\lambda$ (resp., $(d+1-k) \tilde\Intn_{d, k}^{(\lambda)}/\lambda$).
\end{prop}

\rproof
Let $f_k(\mathfrak{t})$ be the number of $k$-faces of a cell, $\mathfrak{t}$. Define the measure
\[
\widehat\mu_{d,k}^{(\lambda)} \coloneqq \sum_{ \mathfrak{s} \in \cells_{d, k}^{(\lambda)},\ \mathfrak{t} \in \cells_d^{(\lambda)} } \II{\mathfrak{s} \subset \mathfrak{t}} \cdot \delta_{ \mathrm{Cen}( \mathfrak{s})} \otimes \delta_{ \mathrm{Cen}( \mathfrak{t})}
\]
on $\HH_d \times \HH_d$. For Borel $A \subseteq \HH_d$, we have 
\[
\widehat\mu_{d,k}^{(\lambda)}(A \times \HH_d) = (d+1-k) \mu_{d, k}^{(\lambda)}{(A)} \text{ a.s.}
\quad \text{and}  \quad
\widehat\mu_{d,k}^{(\lambda)}(\HH_d \times A) = \sum_{ \substack{ \mathfrak{t} \in  \cells_d^{(\lambda)}{,} \\ {\Cen(\mathfrak{t}) \in A}} } f_k(\mathfrak{t}).
\vadjust{\kern -10pt}%
\]
The mean measure $\EE[\widehat\mu_{d,k}^{(\lambda)}]$ is invariant under the diagonal action of $\mob_d$, whence the mass-transport principle yields
\begin{equation} \label{mass_transport}
\EE\bigl[\widehat\mu_{d,k}^{(\lambda)}(A \times \HH_d)\bigr]
= 
\EE\bigl[\widehat\mu_{d,k}^{(\lambda)}(\HH_d \times A)\bigr]
.
\end{equation}
The left-hand side equals $(d+1-k)\Intn_{d, k}^{(\lambda)} \cdot \Vol_{\HH_d}(A)$, while the right-hand side equals, by the refined Campbell theorem, 
the expected number of $k$-faces of the typical cell times $\Intn_{d,d}^{(\lambda)} \cdot \Vol_{\HH_d}(A)$. Hence \eqref{mass_transport} establishes our claim.
The proof is similar for the mean $k$-volume.
\Qed

We now give the distribution of the separation of the typical vertex from its closest ideal nucleus. Surprisingly, it equals the distribution of $R_{d+1}$.

\begin{prop}[\textsc{Typical separation}]\label{p.typ-sep}
The separation of a typical vertex of\/ $\cells_d$ to its corresponding $d+1$ ideal nuclei follows a $\text{\rm Gamma}(d+1)$ distribution.
\end{prop}
\rproof 
For a Voronoi vertex $v$ of the ideal tessellation $\mathcal{V}_d$, let $s(v)$ denote the least separation from any ideal nucleus, as defined in \eqref{eq.defsep}.
Let $\EEtyp$ denote expectation with respect to the Palm distribution of $\cells_d$ for vertices. 
We have that for every pair of compactly supported, continuous functions $f_1, f_2 \colon \R_+ \to \R$ with $\int f_1\bigl(\ud_{\HH_d}(v, \origin)\bigr) \, \ud\Vol_{\HH_d}(v) = 1$,
\rlabel e.typ-vertex
{\EEtyp\bigl[f_2\bigl(s(\origin)\bigr)\bigr]
= 
\Intn_{d,0}^{-1}\cdot \mathbb{E} \biggl[ \sum_{v \in \cells_{d, 0}} f_1\bigl(\ud_{\HH_d}(v, \origin)\bigr) f_2\bigl(s(v)\bigr) \biggr].
}
Consider the Poisson--Voronoi tessellation associated to a Poisson point process $\mathbf{X}^{(\lambda)}$ with intensity $\lambda^{d-1}$. For a $(d+1)$-tuple $\vec{\eta}=(\eta_1 , \dots , \eta_{d+1}) \in \mathbb{H}_d ^{d+1}$, we denote by $B(\vec{\eta})$ an open ball whose boundary contains the points $\eta_1 , \dots , \eta_{d+1}$, by $O(\vec{\eta})$ the ball's center, and by $R({\vec{\eta}})$ its radius{---unless no such ball exists. Let $\mathcal B \subset \HH_d^{d+1}$ denote the set where such a circumscribed ball does exist and is unique. 
Consider the coupling of $\mathbf{X}^{(\lambda)}$ as in \cref{remark_dilation}. As remarked there, we have convergence of Voronoi vertices towards the ideal Voronoi vertices. Furthermore, as shown in the proof of \cref{p.dilateDel}, the closest nuclei to the Voronoi vertices converge to the corresponding ideal nuclei, and so the distances of Voronoi vertices to their closest nuclei converge, when properly normalized as proto-delays, to the linear separations of the ideal Voronoi vertices to their corresponding ideal nuclei. It remains to calculate those distances and take the limit of their normalized distribution. We will use the normalization corresponding to the exponential separations of \eqref{eq.defsep} rather than the proto-delays}.

{We now continue with evaluating the right-hand side of \eqref{e.typ-vertex}.} For any compactly supported, continuous function $f{\colon \HH_d \times \R_+ \to \R}$,
\[
\mathbb{E}  \biggl[ \sum_{v \in \cells_{d, 0}} f\bigl(v,s(v)\bigr) \biggr] = \lim_{\lambda \downarrow 0} \mathbb{E}   \biggl[ \sum_{\vec{\eta} \in (\mathbf{X}^{(\lambda)})^{d+1} {\cap \mathcal{B}} }  f \Bigl(O(\vec{\eta}), \frac{c_{d}}{d-1}\lambda^{d-1}\ue^{(d-1)R(\vec{\eta})} \Bigr)  \mathbf{1}_{B(\vec{\eta}) \cap \mathbf{X}^{(\lambda)} = \emptyset}  \biggr].
\]
By the Slivnyak--Mecke formula, the expectation on the right-hand side is given by
\[
 \frac{1}{(d+1)!} \int \limits_{{\mathcal{B}}} f\Bigl( O(\vec{x}),  \frac{c_{d}}{d-1}\lambda^{d-1}\ue^{(d-1)R(\vec{x})} \Bigr) \exp\Bigl\lbrace -\lambda^{d-1}  \mathrm{Vol}_{ \mathbb{H}_{d}}\bigl(B(\vec{x})\bigr)  \Bigr \rbrace \lambda^{d^2-1} \,\ud \vec{x} \; .
\]

Using the spherical Blaschke--Petkantschin change of variable $x_i \coloneqq z + {u} \cdot {\theta}_i$ for $(z,{u}, {\theta}_1, \dots, {\theta}_{d+1}) \in \mathbb{H}_d \times \mathbb{R}_{+} \times (\mathbb{S}_{d-1})^{d+1} $ stated in~\cite[Proposition 1.1]{chapron2018}\footnote{The formula there is missing a factor of $\II{\mathcal{B}}(x_1, \dots, x_{d+1})$ on the left-hand side, where $\mathcal B$ is as defined here. Necessarily, $\{x_1, \ldots, x_{d+1}\}$ lie on a common sphere, namely, the sphere centered at $z$ with radius ${u}$.}, the previous integral becomes
\begin{equation}\label{eq.convexp} 
C(d) \int \limits_{\mathbb{H}_d \times \mathbb{R}_{+} } f\bigl(z, \frac{c_{d}}{d-1}\lambda^{d-1}\ue^{(d-1){u}}\bigr) \exp \Bigl\{ -\lambda^{d-1} \Omega_d \int _0 ^{{u}} (\sinh t)^{d-1} dt \Bigr\} (\sinh u)^{d^2-1} \,\ud u  \,\ud z,
\end{equation}
where $C(d)$ is a finite constant. By the change of variable $s\coloneqq\frac{c_{d}}{d-1}\lambda^{d-1}\ue^{(d-1){u}}$ and passing to the limit $\lambda \downarrow 0$ {with the aid of the Lebesgue dominated convergence theorem, we see that} the quantity \eqref{eq.convexp} converges to $C'(d) \int \limits_{\mathbb{H}_d \times \mathbb{R}_+} f(z,s) s^d \ue^{-s} \,\ud s \,\ud z$ {for another constant $C'(d)$}.  The proof is concluded by choosing $f(z,s)=f_{1}\bigl(\ud_{\HH_d}(v, \origin)\bigr) f_{2}(s)$ for $f_{1}$ and $f_{2}$ as above.
\Qed

  We close this subsection by showing that the face intensities are not only expectations, but also a.s.\ limits on large balls, as well as showing two ergodic limits concerning the separation. For this, we use the following ergodic theorem of \cite{NevoI, NevoStein} specialized from the context of a general probability-measure-preserving action to our setting: 

\begin{thm}[Specialization of \cite{NevoI, NevoStein}]\label{t.nevo}
Let $f$ be a function on the space of ideal Voronoi tessellations such that $f \in L^p$ with respect to the natural probability measure $\PP$ on IPVTs for some $p > 1$. Let $F_u$ be the set of isometries of\/ $\HH_d$ that map the origin into the ball $B_u = B_u(\origin)$ of hyperbolic radius $u$ and volume $b_u$, and $\mu$ be a Haar measure on $\mob_d$ normalized so that $\mu(F_1) = b_1$ (and hence $\mu(F_u) = b_u$ for all radii, $u$). Then $\lim_{u \to\infty} b_u^{-1} \int_{F_u} f(g \cells_d) \,\ud\mu(g) = \EE[f]$ a.s. \qed
\end{thm}

We give three examples.

\begin{prop}[\textsc{Ergodic limits}] We adopt the notation of \cref{t.nevo}.
\begin{enumerate}[label=\textup{(\roman*)}]
\item For $0 \le k \le d-1$, the counting face intensities satisfy
\[
\mathrm{I}_{d,k} = \lim_{u \to\infty} \frac{\mu_{d,k}(B_u)}{\Vol_{\HH_d}(B_u)} \mbox{ a.s.}
\]
\item Let $s(z)$ be the separation of $z$ from the ideal nucleus of its Voronoi cell. Then a.s., for all $t \in (0, 1)$, we have
\[
\lim_{u \to\infty} \frac{1}{\Vol_{\HH_d}(B_u)} \int_{B_u} \ue^{t s(z)} \,\ud \Vol_{\HH_d}(z) = \frac{1}{1 - t}.
\]
In other words, the limit empirical moment generating function of the separation is the moment generating function of an $\Exp(1)$ random variable.
\item Let $D_u$ be the empirical distribution of the separations of the Voronoi vertices in $B_u$ from their associated ideal nuclei. Then $D_u$ tends weakly as $u \to\infty$ to a ${\rm Gamma}(d+1)$ distribution a.s.
\end{enumerate}
\end{prop}

\rproof 
\textbf{(i)}
Recall \eqref{def_random_measures} and let $f_\epsilon({\cells_d}) \coloneqq \mu_{d,k}\bigl({B_\epsilon}\bigr)/b_\epsilon$ be the number of $k$-face centers within distance $\epsilon$ of $\origin$ divided by
$b_\epsilon$.  Note that $\EE[f_\epsilon({\cells_d})] = \mathrm{I}_{d,k}$. Write
\[
A(u, \epsilon) \coloneqq b_u^{-1} \int_{F_u} f_\epsilon(g \cells_d) \,\ud\mu(g).
\]
According to \cref{t.nevo}, $\lim_{u \to\infty} A(u, \epsilon) = \mathrm I_{d, k}$ a.s.\ for each $\epsilon > 0$.
We have the inequalities
\[
   b_{u-\epsilon} A(u-\epsilon, \epsilon) \le {\mu_{d,k}(B_u)} \le b_{u+\epsilon} A(u + \epsilon, \epsilon).
\]
Divide by $b_u$, take $u \to\infty$, and then take $\epsilon$ to 0 along a sequence. 
%pub% A similar statement and argument works for the measure face intensities, without any need for centers.

\textbf{(ii)}
All $s(z)$ have the same distribution, namely, that of $R_1$, which is $\Exp(1)$. Since 
\[
\int_{B_u} \ue^{t s(z)} \,\ud \Vol_{\HH_d}(z) = \int_{F_u} \ue^{t s(g \origin)} \,\ud \mu(g),
\]
the result follows for each $t$ separately from \cref{t.nevo}. Applying this result to rational $t$ and noting that $\ue^{t s(z)}$ is monotone in $t$ allows a comparison for irrational $t$ that gives the result.

\textbf{(iii)}
Let $f_{\epsilon, t}({\cells_d})$ be the number of vertices within distance $\epsilon$ of $\origin$ whose separations from their nuclei are at most $t$, divided by ${\mathrm{I}_{d,0}} \cdot b_\epsilon$.  Then $\EE[f_{\epsilon, t}(\cells_d)] = \frac{1}{d!}\int_{0}^{t} x^{d} \ue^{-x} \,\ud x $ for all $\epsilon, t > 0$ by \cref{p.typ-sep} and the refined Campbell theorem.
{Write
\[
A(u, \epsilon, t) \coloneqq b_u^{-1} \int_{F_u} f_{\epsilon, t}(g {\cells_d}) \,\ud\mu(g).
\]
According to \cref{t.nevo}, $\lim_{u \to\infty} A(u, \epsilon, t) = \frac{1}{d!}\int_{0}^{t} x^{d} \ue^{-x} \,\ud x $ a.s.\ for each $\epsilon, t > 0$.
We have the inequalities
\[
   \Intn_{d, 0} b_{u-\epsilon} A(u-\epsilon, \epsilon, t) \le \mu_{d,0}(B_u) D_u(t) \le \Intn_{d, 0} b_{u+\epsilon} A(u + \epsilon, \epsilon, t).
\]
Divide by $\Intn_{d,0} b_u$, take $u \to\infty$, and then take $\epsilon$ to 0 along a sequence. The result of part (i) for $k = 0$ then gives $\lim_{u \to\infty} D_u(t) = \frac{1}{d!}\int_{0}^{t} x^{d} \ue^{-x} \,\ud x $ a.s. Use this for all rational $t$ together with monotonicity in $t$ to get the result for all $t$.}
\Qed

One can prove similar ergodic theorems for various statistics within horospherical shells. This follows from the Euclidean ergodic theorem (e.g., \cite[Theorem 9.3.1]{SchneiderWeil}) by using the upper half-space model, $\uhs_d$. We also have that the action by translation along a geodesic or along a horocycle is ergodic. 
%pub%The idea is to regard the corona as the space of horospheres. Approximate any event by its conditional expectation with respect to the $\sigma$-field given by those horospheres that intersect a large ball about the origin. If the event is invariant under the translation action, then since the action will eventually move such a ball to a disjoint ball, we get that the event is approximately independent of itself, whence it is trivial.

\subsection{Computations via limits as $\lambda \to 0$}
With Lemma \ref{lem:convintensities} at hand, we can now use the explicit results \cite{isokawa2000H2,isokawa2000H3,GodKabTha,CCEpreprint} to compute the $k$-face intensities. To give the exact expressions, define
\[
\cc(a)
\coloneqq
\frac{\Gamma(a)}{\sqrt\pi\,\Gamma(a - 1/2)}
\]
and
\rlabel e.defJJ
{\JJ_{d, k}
\coloneqq
2\binom{d}{k} \cc\Bigl(\frac{d^2}2\Bigr) \int_0^\infty \frac1{\cosh^{d^2-1}(u)} \Re \biggl(\half+\ui\cdot \cc\Bigl(\frac{d+1}2\Bigr) \int_0^u \cosh^{d-1}(v) \,\ud v\biggr)^{\! k} \,\ud u.
}
In particular, $\JJ_{d, 0} = 1$.
(It actually turns out that $\JJ_{d, k}$ is the expected sum of the internal angles at all $(d-k-1)$-faces of a simplex in $\R^{d-1}$ generated by $d$ i.i.d.\ points with density proportional to $x \mapsto \bigl(1+|x|^2\bigr)^{1/2-d}$; see \cite[Section 1.4]{KabAngles} for more on the values and computation of $\JJ_{d, k}$.)
%pub%\cite[p.~1282]{gusakova2022delaunay}.  
For $d = 2, 3, 4, 5$, we have the following values of $(\JJ_{d, k} \ST 1 \le k \le d-1)$: $(1)$, $(3/2, 1/2)$, $(2,170/143,27/143)$, and 
\[
\Bigl(\frac{5}{2},\frac{5}{3} + \frac{62173301}{13970880 \pi^2},\frac{62173301}{9313920 \pi^2},-\frac{1}{6} + \frac{62173301}{27941760 \pi^2}\Bigr).
\]
Also, define 
\rlabel e.defIDV
{\IDV_d
\coloneqq
\frac{2 \sqrt{\pi }\, \Gamma \big(\frac{d+1}{2}\big)^{d}
\Gamma \big(\frac{d^2}{2}\big)}{\bigl((d-1) \Gamma (\frac{d}{2})\bigr)^{d+ 1} \Gamma \big(\frac{d^2-1}{2} \big)}.
}

\begin{thm}[\textsc{Counting face intensities}]\label{t.FaceInt} 
$\IDV_d$ is the mean volume of the typical ideal Delaunay simplex.
For $0 \leq k \leq d-1$ and $d \geq 2$, we have 
\[ \mathrm{I}_{d,k} = \frac{d+1}{d+1-k} \JJ_{d, k} \cdot \frac{1}{\IDV_d} \, .\]
When $k = 0$, this limit is asymptotic to $\sqrt2 \ue^{13/12-d/2} d^{d-1/2}$ as $d \to\infty$.
\end{thm}

For $2 \le d \le 5$, the vertex intensity (which is the reciprocal of $\IDV_d$) evaluates to
\[
\frac{1}{\pi },\
\frac{16 \pi }{35},\ \frac{1287}{16 \pi ^2},\ \frac{3981312 \pi ^2}{676039}.
\]
%pub%,\ \frac{108037325}{6912 \pi ^3},\ 
%pub%\frac{5971968000000 \pi ^3}{35830670759},\ \frac{305437356823 765089}{51200000000 \pi ^4},\
   %pub%\\ &\frac{928486191110553600000000 \pi ^4}{109701233401363445369},\ \frac{34 39477654962415681868607861}{9039207 96800000000 \pi ^5},
%pub%
%pub%which, numerically, is approximately
%pub%\[
%pub%0.31831,\ 1.43616,\ 8.15002,\ 58.1238,\ 504.104,\ 5167.88,\ 61242.5, \ 824448.,\ 1.24341\cdot 10^7.
%pub%\]
The case of $d = 2$ is easy directly, since every vertex of $ \mathcal{V}_{d}$ is the generalized center of its corresponding Delaunay triangle, which is an ideal triangle, and because every ideal triangle has area $\pi$.

\rproof
\cite[Theorem 3.11]{GodKabTha} gives the mean face vector of the typical cell of $\cells_d^{(\lambda)}$. We then use \cref{p.typ-face} to get $\Intn_{d,k}^{(\lambda)}${ and} take the limit as $\lambda \to 0$. Here is an outline of the calculation of the limit. The formula for the mean face vectors, \cite[(3.18)]{GodKabTha}, is a sum over an index, $s$. The term with $s = 0$ tends to infinity as $\lambda \to 0$ much faster than the others, so that is the only one whose asymptotics concerns us. In the notation of~\cite{GodKabTha}, this term is written $2 \mathbb I_{\alpha, d}^*(d) \tilde {\mathbb J}_{d, d-k}(d - 1/2)$. In our notation (see \cite[(2.8)]{GodKabTha}):
\begin{itemize}
\item $\mathbb I_{\alpha, d}^*(d) = \alpha^d I_{\alpha, d}^*(d)/d!$,
\item $\tilde {\mathbb J}_{d, d-k}(d - 1/2) = \JJ_{d, k}$,
\end{itemize}
with $\alpha \coloneqq 2^d \lambda/\tilde c_{d, d}$, $\tilde c_{a, b} \coloneqq \Gamma(b)/\bigl(\pi^{a/2} \Gamma(b-a/2)\bigr)$, and 
\[
I_{\alpha, d}^*(d)
=
\tilde c_{1, (d^2+1)/2} \int_0^\infty \sinh^{d^2-1} \varphi \cdot \exp \biggl\{ - \alpha \tilde c_{1, (d+1)/2} \int_0^\varphi \sinh^{d-1} \theta \,\ud \theta \biggr\} \ud \varphi.
\]

Taking the limit $\lambda \to 0$, we can approximate in the integrand $\sinh \theta$ by $\ue^\theta/2$ and likewise $\sinh \varphi$ by $\ue^\varphi/2$. Approximate the resulting inner integral by $\ue^{(d-1)\varphi}/\bigl((d-1)2^{d-1}\bigr)$ and change variables to $\psi \coloneqq \ue^{(d-1) \varphi}$. Then observe that the resulting integral is asymptotic to $1/\bigl(2^{d(d-1)}\alpha \tilde c_{1, (d+1)/2}\bigr)$ times the $d$th moment of an exponential random variable with parameter $(d-1)2^{d-1}/(\alpha \tilde c_{1, (d+1)/2})$. 
\Qed

We remark that for $k = 0$, we could also have used the vertex intensity for $\lambda > 0$ given in \cite[Theorem 1.2(ii)]{CCEpreprint} (which does not appear in the published version, \cite{CCE}). Furthermore, instead of using the tail bounds for $k = 0$, we could have used the fact that the volumes of simplices in $\HH_d$ are bounded (see, e.g., \cite[Lemma 5.2]{Bow}), which yields convergence of the mean volumes of typical Delaunay simplices and thereby convergence of vertex intensities.

\begin{cor}[\textsc{Typical face vectors}]\label{c.f-vector}
Let $\tcell_d^{(\lambda)}$ be the typical cell in the Poisson--Voronoi tessellation of\/ $\HH_d$ with intensity $\lambda > 0$.
For $k, \ell \in [0, d-1]$, the limit as $\lambda \to 0$ of the mean number of $k$-faces of $\tcell_d^{(\lambda)}$ divided by the mean number of $\ell$-faces of $\tcell_d^{(\lambda)}$ is $\JJ_{d, k}/\JJ_{d,\ell}$.
\end{cor}

\rproof 
Use \cref{p.typ-face} and \cref{lem:convintensities} to get that the quotient has limit $\frac{(d+1-k)\Intn_{d,k}}{(d+1-\ell)\Intn_{d,\ell}}$. Then substitute the values from \cref{t.FaceInt}.
\Qed

This can also be proved from the formula for the mean $f$-vector of $\tcell_d^{(\lambda)}$ given in \cite[Theorem 3.11]{GodKabTha}, using the fact that the term with $s = 0$ in (3.18) there approaches $\infty$ much faster than the other terms do (see (2.7) there for the definition of the terms).
%pub% The term with general $s$ has order $\lambda^{\frac{2s}{d-1} - 1}$ as $\lambda \to 0$ and $0 \le s \le k/2 \le (d-1)/2$.

We can combine \cref{p.typ-face} with results of Isokawa \cite{isokawa2000H2,isokawa2000H3} to compute measure intensities in dimensions $2$ and $3$.

\begin{prop}[{\textsc{Measure face intensities through Isokawa}}] We have 
$$ \tilde{\mathrm{I}}_{2,1} = \frac{2}{\pi}, \quad   \tilde{\mathrm{I}}_{3,2} = \frac{4}{3} ,   \mbox{ and}\quad  \tilde{\mathrm{I}}_{3,1} = \frac{8\pi}{15}  , $$
where $ \tilde{\mathrm{I}}_{d,k}$ is defined in \eqref{eq:defintensities}.
\end{prop}

\rproof
We first note that $2 \cdot \tilde{\mathrm{I}}_{2,1}$ has already been computed in \cite[page 5]{BudzinskiCurienPetri} using~\cite{isokawa2000H2} as the
$\lambda \rightarrow 0$ limit of the mean surface area of the typical cell of $\cells_2^{(\lambda)}$ divided by the mean volume of the typical cell of $\cells_2^{(\lambda)}$.  By \cref{p.typ-face}, this limit amounts to
$2\cdot \tilde{\mathrm{I}}_{2,1}$ since the mean typical cell volume is $1/\lambda$. To compute
$\tilde{\mathrm{I}}_{3,2}$ and $\tilde{\mathrm{I}}_{3,1}$, we proceed similarly and use the mean surface area and mean perimeter length of the typical cell of $\cells_3^{(\lambda)}$ in~\cite[Theorem 1.1]{isokawa2000H3} divided by the mean volume of the typical cell of $\cells_3^{(\lambda)}$ and by the combinatorial factors 2 and 3 respectively, as every 2-dimensional face is common to 2 and every 1-dimensional face is common to 3 cells in $\mathcal V_3^{{(\lambda)}}$. More specifically, by \cref{lem:convintensities},
\begin{equation} \label{eq:Itildes}
\tilde{\mathrm{I}} _{3,2} = \lim_{\lambda \rightarrow 0}  \frac{2^6 \pi^2}{3 \cdot 2} \lambda^2 \cdot J_4 (\lambda) \quad\mbox{and}\quad \tilde{\mathrm{I}} _{3,1} = \lim_{\lambda \rightarrow 0} \frac{2^6 \pi ^4}{5 \cdot 3} \lambda^3 \cdot J_6 (\lambda), 
\end{equation}
where
$$
J_{m}(\lambda) \coloneqq \int_0^\infty  \exp{ [-2\pi \lambda (\sinh r \cosh r -r)]} \sinh ^m r  \, \ud r \quad\text{for } m \in \mathbb{N} \, .
$$

For small $\lambda$, the maximum of the integrand occurs at $ r_{\rm max} = \frac{1}{2} \log (\frac{m}{\pi \lambda}) + o (1)$.  Letting $x \coloneqq r-r_{\rm max}$, the dominated convergence theorem results in
$$
\lim_{\lambda \rightarrow 0} \lambda ^{m/2} \cdot J_m({\lambda}) = \left(\frac{m}{\pi}\right)^{m/2} 2^{-m} \int_{-\infty}^\infty \exp\{ -\frac{m}2 \ue^{2x} + m x\} \,\ud x = \frac{1}{2 (2 \pi)^{m/2}} \Gamma\Big(\frac{m}{2}\Big) \, .
$$
Substituting the right-hand side evaluated at $m=4$ and $m=6$, respectively, into \eqref{eq:Itildes} concludes the proof.
\Qed

\cref{tab:isokawa} gathers these measure face intensities with the corresponding counting face intensities.
An extension of these results is given in \cref{p.isokawa}.

\begin{center}
\begin{tabular}{  m{5em} | m{2cm}| m{2cm} | m{2cm} } 
  
 $\tilde{\mathrm{I}}_{(d,k)}  \Big/  \mathrm{I}_{(d,k)}$ & \centering $k=0$ & \centering $k=1$ & \, \, \,   $k=2$   \\ 
  \hline
  \vspace{0.2 cm} \centering $d=2$ \vspace{0.2 cm} & \centering $\frac{1}{\pi} \Big/ \frac{1}{\pi}$\vspace{0.03 cm} & \centering \centering $\frac{2}{\pi} \Big/ \frac{3}{2\pi}$\vspace{0.03 cm} &  \cellcolor{black!40}  \\ 
  \hline
  \vspace{0.2 cm} \centering $d=3$  \vspace{0.2 cm} & \centering $\frac{16\pi}{35} \Big/ \frac{16\pi}{35}$\vspace{0.03 cm} & \centering $\frac{8\pi}{15} \Big/ \frac{32\pi}{35}$\vspace{0.03 cm}& \centering $\frac{4}{3} \Big/ \frac{16\pi}{35}$\vspace{0.03 cm}
\end{tabular}
\end{center}

\begin{center}
\captionof{table}{Values of $\tilde{\mathrm{I}}_{(d,k)}$ and $  \mathrm{I}_{(d,k)}$ for $d=2$ and $d=3$, $0 \leq k \leq d-1$. Note that the mean length of a typical edge for $d=2$, i.e., $\tilde{\mathrm{I}}_{(2,1)}  \big/  \mathrm{I}_{(2,1)}$ by \cref{p.typ-vol}, is equal to $4/3$, while for $d=3$, it is equal to $7/12$. }
\label{tab:isokawa}
\end{center}

\section{{T}he tile of the origin (zero cell)}\label{s.zerocell}
In this section we will describe the law of the cell $ \mathcal{C}_d$ containing the origin  in the upper half-space model $ \mathbb{U}_{d}$ of hyperbolic space, once the ideal nucleus closest to $\origin$ (the only end of its tile by Theorem \ref{thm:decomposition}) is sent to $\infty$  (Theorem \ref{thm.superposition}). We then use it to compute the hole probability, as well as several other quantitative characteristics of $ \mathcal{C}_d$.
\subsection{A deposition model for the tile of the origin}\label{s.deposition}
Recall the construction of $ \mathcal{V}_{d} = \mathrm{Vor}\bigl( ({\Theta}_{i}, R_{i})_{i\geq 1}\bigr)$ given in \cref{t.weighted}. We will focus here on the cell $ \mathcal{C}_{d}$ containing the origin $ \origin \in \mathbb{H}_{d}$. Recall that by our change of variable, $$R_i = \frac{c_{d}}{d-1} \mathrm{exp}\bigl\{(d-1)D_i\bigr\}$$ are the radii of the nuclei on the corona, where $c_d$ is as in \eqref{e.defc_d}. Clearly, the origin is closer to the nucleus $({\Theta}_{1}, R_{1})$ than to any other ideal nuclei. We will consider the tile $ \mathcal{C}_{d}$ in the upper half-space model and where the ideal nucleus angle ${\Theta}_{1}$ has been sent to $\infty$ in $ \mathbb{U}_{d}$.

\begin{proofof}{\cref{thm.superposition}}
By \cref{prop:bissec}, inside the upper half-space model~$\mathbb{U}_{d}$ and where ${\Theta}_{1}$ is sent to $\infty$, for all $i \geq 2$, the bisector between $({\Theta}_1, R_1)$ and $({\Theta}_i, R_i)$ is a Euclidean hemisphere: denoting the stereographic projection of ${\Theta}_{i}$ as $\mathrm{Ste}({\Theta}_{i}) \in \mathbb{R}^{d-1}$, we may write its center and radius as
\begin{equation*} %\label{centerradiiupper}
(\mathrm{Ste}({\Theta}_{i}), \sqrt{1+ |\mathrm{Ste}({\Theta}_{i})|^{2}} (R_1/R_i)^{ \frac{1}{2(d-1)}}),
\end{equation*}
or, more usefully, 
$$\Bigl(\mathrm{Ste}({\Theta}_{i}),\sqrt{1+ |\mathrm{Ste}({\Theta}_{i})|^{2}} \; \Bigl(\frac{R_1}{R_1+\left(R_i-R_1\right)}\Bigr)^{\frac{1}{2(d-1)}}\Bigr) \, .$$ 
By Lemma~\ref{lem.stereo}, conditionally on $R_1$, the point process $\left(\mathrm{Ste}({\Theta}_{i}),R_i-R_1\right)_{ i \geq 2 }$ is a PPP with intensity 
$$
\frac{1}{c_d}\frac{1}{\bigl(1+\sum_{i=1}^{d-1} x_i^2 \bigr)^{d-1}} \,\mathrm{d}x_1 \cdots  \mathrm{d}x_{d-1} \otimes   \mathrm{d}t \,\mathbf{1}_{t >0} 
$$
in $ \mathbb{R}^{d-1} \times \mathbb{R}_{+}$.

Now conditionally on $ R_1=s$, we apply the Poisson mapping theorem to compute the intensity of the point process of the centers and radii
by the change of variable $\rho \coloneqq \sqrt{1+|x|^{2}} \bigl(\frac{s}{s+t}\bigr)^{\frac{1}{2(d-1)}}$, which gives, for a test function $u$,
\begin{equation*}
\begin{split}
   \frac{1}{c_d} \int_{\mathbb{R}^{d-1}\times \mathbb{R}_{+}} u\Bigl(x,\sqrt{1+|x|^{2}} &\Bigl(\frac{s}{s+t}\Bigr)^{\frac{1}{2(d-1)}}\Bigr) \frac{1}{\left(1+|x|^{2} \right)^{d-1}}\,   \mathrm{d}x_1 \cdots  \mathrm{d}x_{d-1}\, \mathrm{d}t\\
& =  \frac{d-1}{c_{d}} \int_{\mathbb{R}^{d-1}\times \mathbb{R}_{+}} u\left(x,\rho \right) \frac{2s}{\rho^{2d-1}} \mathbf{1}_{1+|x|^{2}\geq \rho^{2}} \,\mathrm{d}x_1 \cdots  \mathrm{d}x_{d-1} \,  \mathrm{d}\rho.
\end{split}
\end{equation*}
Recalling that $R_{1}$ is an ${\rm Exp}(1)$ random variable and noting that $\frac{d-1}{c_{d}}$ times an ${\rm Exp}(1)$ random variable is an ${\rm Exp}(\frac{c_{d}}{d-1})$ random variable concludes the proof.
\end{proofof}

\subsection{The hole probability}
In this section, we use the preceding construction of the zero cell of $\mathcal{V}_{d}$ to compute  the probability for $\mathcal{C}_d$ to contain a ball centered at the origin: this provides the law of the distance of $\origin$ to $\partial \mathcal{C}_{d}$.

\begin{prop}[\textsc{Hole probability}]\label{prop.holeprob}
The hole probability, i.e., the probability that the ball $B_u(\origin)$ centered at $\origin$ with hyperbolic radius $u$ is contained in $\mathcal{C}_d$, is given by, respectively,
\begin{enumerate}[label=\textup{(\roman*)}]
\item conditional on $R_{1}=s$, 
$$
\mathbb{P}\bigl[B_u(\origin) \subset \mathcal{C}_d \bigm | R_{1}=s \bigr] = {\rm exp}\left(-s I_{d}(u) \right) ;
$$ 

\item averaging on the value of $R_{1}$,
$$
\mathbb{P}[B_u(\origin) \subset \mathcal{C}_d]  = \frac{1}{1+I_{d}(u)}  \, .
$$ 
\end{enumerate}
Here, $I_d(u)$ as defined below in \eqref{eq.idr2} satisfies
\[
1+I_d(u)
=
\frac{2^{d-1} \Gamma(d/2)}{\sqrt\pi\, \Gamma\bigl(\frac{d-1}2\bigr)}
(\cosh u)^{1-d}  \int_{0}^{1}  \frac{ t^{d-2}\left(1-t^{2}\right)^{\frac{d-3}{2}}}{\left(1- t \tanh u \right)^{2d-2}}\, \mathrm{d}t .
\]
In particular, 
$\mathbb{P}\bigl[B_u(\origin) \subset \mathcal{C}_d \bigm | R_{1}=s \bigr] \le \ue^{-s(d-1)u}$ and $\mathbb{P}[B_u(\origin) \subset \mathcal{C}_d] \le \ue^{-(d-1)u}$; that is, the distance from $\origin$ to the boundary of\/ $\ocell_d$ is stochastically dominated by an $\Exp\bigl(s(d-1)\bigr)$ random variable when conditioned on $R_1 = s$ and is stochastically dominated by an $\Exp(d-1)$ random variable unconditionally. Also,
$\mathbb{P}[B_u(\origin) \subset \mathcal{C}_d] \ge \ue^{-2(d-1)u}$, that is, the distance from $\origin$ to the boundary of\/ $\ocell_d$ stochastically dominates an $\Exp\bigl(2(d-1)\bigr)$ random variable.
\end{prop}

\begin{proof} First, we condition on $R_{1}=s$, so that the intensity measure of the Poisson point process $(x,\rho)$ is given by $2s \frac{d-1}{c_{d}} \mathrm{d}x \, \frac{ \mathrm{d}\rho}{\rho^{2d-1}}\mathbf{1}_{ \rho \leq \sqrt{1+|x|^{2}}}$. Second, we parametrize $B_u(\origin)$ in the upper half-space model $\mathbb{U}_{d}$: here, $B_u(\origin)$ is represented by a Euclidean ball $B(C_\origin,R_\origin)$ of Euclidean center $C_\origin=\left(0,\ldots,0,\cosh{u}\right)$ and Euclidean radius $R_\origin=\sinh{u}$.  Hence the event $B_u(\origin) \subset \mathcal{C}_{d}$ corresponds to the event that this point process has no point in the region $\bigl\lbrace  (x,\rho) \in \Pi \ST \left(\rho+R_\origin \right)^{2} \geq |x|^{2}+|C_\origin|^{2}   \bigr\rbrace$. Therefore, by \cref{thm.superposition}, the conditional hole probability is 
$$
\mathbb{P}[B_u(\origin) \subset \mathcal{C}_d \mid R_1 = s] = \text{exp}\left\{-s \, I_{d}(u) \right\} , 
$$
where 
\begin{equation}\label{eq.idr2}
\begin{split}
 I_{d}(u) &\coloneqq \frac{2(d-1)}{c_{d}} \int_{\mathbb{R}^{d-1}}\mathrm{d}x \int_{\mathbb{R}_{+}} \frac{\mathrm{d}\rho}{\rho^{2d-1}} \mathbf{1}_{(\rho + R_\origin)^{2}\geq |x|^{2}+C_\origin^{2}}\times \mathbf{1}_{\rho^2\leq 1+|x|^{2}}\\
 &=\frac{1}{c_{d}} \int_{\mathbb{R}^{d-1}}\mathrm{d}x \frac{1}{\bigl(\sqrt{|C_\origin|^2+|x|^{2}}-R_\origin\bigr)^{2d-2}}-\frac{1}{c_{d}} \int_{\mathbb{R}^{d-1}}\mathrm{d}x \frac{1}{\left(1+|x|^{2}\right)^{d-1}}  \; .\\
 \end{split}
\end{equation}
The equality arises from the fact that the region $\bigl\lbrace  (x,\rho) \in \Pi \ST \left(\rho+R_\origin \right)^{2} \geq |x|^{2}+|C_\origin|^{2}   \bigr\rbrace$ contains the set $\bigl\{(x, \rho) \ST \rho^2 \ge 1 + |x|^2\bigr\}$, which is where the hemisphere corresponding to $(x, \rho)$ contains $\origin$.
The second integral is equal to 1 by \cref{lem.stereo}. For the first integral, change to polar coordinates, then to $\sinh \eta \coloneqq |x|/|C_\origin|$, and finally to $t \coloneqq 1/\cosh \eta$. This gives (i), which immediately implies (ii).

The last unconditional inequalities are equivalent to $\ue^{(d-1)u} \le 1 + I_d(u) \le \ue^{2(d-1)u}$ for all $u \ge 0$. (For the conditional inequality, we also use that $\ue^{(d-1)u} - 1 \ge (d-1)u$.)
These follow from elementary calculations:
Because $1 = \ue^{-2u} + 2(\sinh u) \sqrt{\ue^{-2u}} \le \ue^{-2u} + 2(\sinh u) \sqrt{\ue^{-2u} + |x|^2}$, it follows that $|C_{\origin}|^2 + |x|^2 \le \bigl(\sqrt{\ue^{-2u} +|x|^2} + R_\origin\bigr)^2$. Therefore,
\eqaln
{1 + I_{d}(u)
&=
\frac{1}{c_{d}} \int_{\mathbb{R}^{d-1}} \frac{\mathrm{d}x}{\bigl(\sqrt{|C_\origin|^2+|x|^{2}}-R_\origin\bigr)^{2d-2}}
\ge
\frac{1}{c_{d}} \int_{\mathbb{R}^{d-1}} \frac{\mathrm{d}x}{\left(\ue^{-2u}+|x|^{2}\right)^{d-1}} 
\\ &=
\frac{1}{c_{d}} \int_{\mathbb{R}^{d-1}} \frac{\ue^{(d-1)u}\,\mathrm{d}y}{\left(1+|y|^{2}\right)^{d-1}} = \ue^{(d-1)u}  \, ,
}
where we used the change of variables $y \coloneqq \ue^{u} x$.
Also, $(1-\ue^{-2u})(1+|x|^2) \ge (1-\ue^{-2u})\sqrt{1+|x|^2} = 2 \ue^{-u} (\sinh u) \sqrt{1+|x|^2}$, whence $|C_{\origin}|^2 + |x|^2 \ge \bigl(\ue^{-u} \sqrt{1+|x|^2} + R_{\origin}\bigr)^2$. Hence,
\[
1 + I_{d}(u)
=
\frac{1}{c_{d}} \int_{\mathbb{R}^{d-1}} \frac{\mathrm{d}x}{\bigl(\sqrt{|C_\origin|^2+|x|^{2}}-R_\origin\bigr)^{2d-2}}
\le
\frac{1}{c_{d}} \int_{\mathbb{R}^{d-1}} \frac{\ue^{2(d-1)u}\,\mathrm{d}x}{\left(1+|x|^{2}\right)^{d-1}} = \ue^{2(d-1)u}  \; .
\qedhere
\]\end{proof}

\begin{remark} For $d=2$, we have $I_{2}(u)=\frac{1}{\pi}\bigl(4 \left(\arctan{\mathrm{e}^{u}}\right)\cosh^2{u}+2\sinh{u}\bigr)-1$, which gives, for the integrated hole probability,
\rlabel e.edgehole
  {\mathbb{P}[B_u(\origin) \subset \mathcal{C}_2] = \frac{\pi}{4 \left(\arctan{\mathrm{e}^{u}}\right)\cosh^2{u}+2\sinh{u}} \, ,}
a result first obtained in \cite{bhupatiraju} (Theorem 3.3) by computing the hole probability in Poisson--Voronoi tessellations with positive intensity $\lambda$ on $ \mathbb{H}_d$ and then taking the limit as $\lambda \to 0$.  
It is easily seen, using the change of variables $v \coloneqq 1 - t \tanh u$,
that for odd $d\ge3$, the (integrated) hole probability reduces to a rational function of $\mathrm{e}^{u}$.  For example, when $d=3$ we get
$$
  \mathbb{P}[B_u(\origin) \subset \mathcal{C}_3] =  \frac{3 \mathrm{e}^{-2u}}{2+\mathrm{e}^{2u}}\; .
$$ 
In dimension $2$, differentiating the above expression \eqref{e.edgehole} yields that the density for the distance from $\origin$ to $\partial \cells_2$ is 
\[
u \mapsto \frac{\pi  \cosh u \left(2 (\arctan \ue^u) \sinh u+1\right)} {\bigl(\sinh u+2 (\arctan \ue^u) \cosh ^2u\bigr)^2}\,,
\]
whose value at $0$ is $4/\pi$. The tail probability is asymptotic to $2\ue^{-2u}$ as $u \to\infty$. The mean distance is $0.66137^+$, and the median distance is $0.50264^-$. The tail probabilities are plotted in \Cref{f.edgehole}. These statistics are reflected in the portrait of the cell given by 60 (pseudoindependent, pseudorandom) samples in \Cref{f.2Dportrait}. 
{Here are some additional nice values: for $d = 3$, the mean distance is $3(2-\log 3)/8 = 0.34^-$ and median is $\frac12\log(\sqrt7 - 1) = 0.25^-$, while for $d = 5$, the mean is $5\bigl(279 \log7 - 184 \sqrt6 \arctan(\sqrt6/13) - 378\bigr)/2352 = 0.17^+$ and median is $0.12^+$.}

\begin{figure}[htp] 
\centering
\includegraphics[width=.6\textwidth]{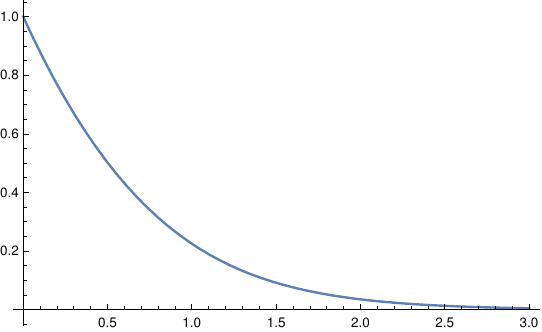}
\caption{The tail probability to be farther than hyperbolic distance $u$ from the ideal Voronoi edges in dimension 2.}
		\label{f.edgehole}
\end{figure}

\begin{figure}[htp] 
\centering
\includegraphics[height=0.95\textheight]{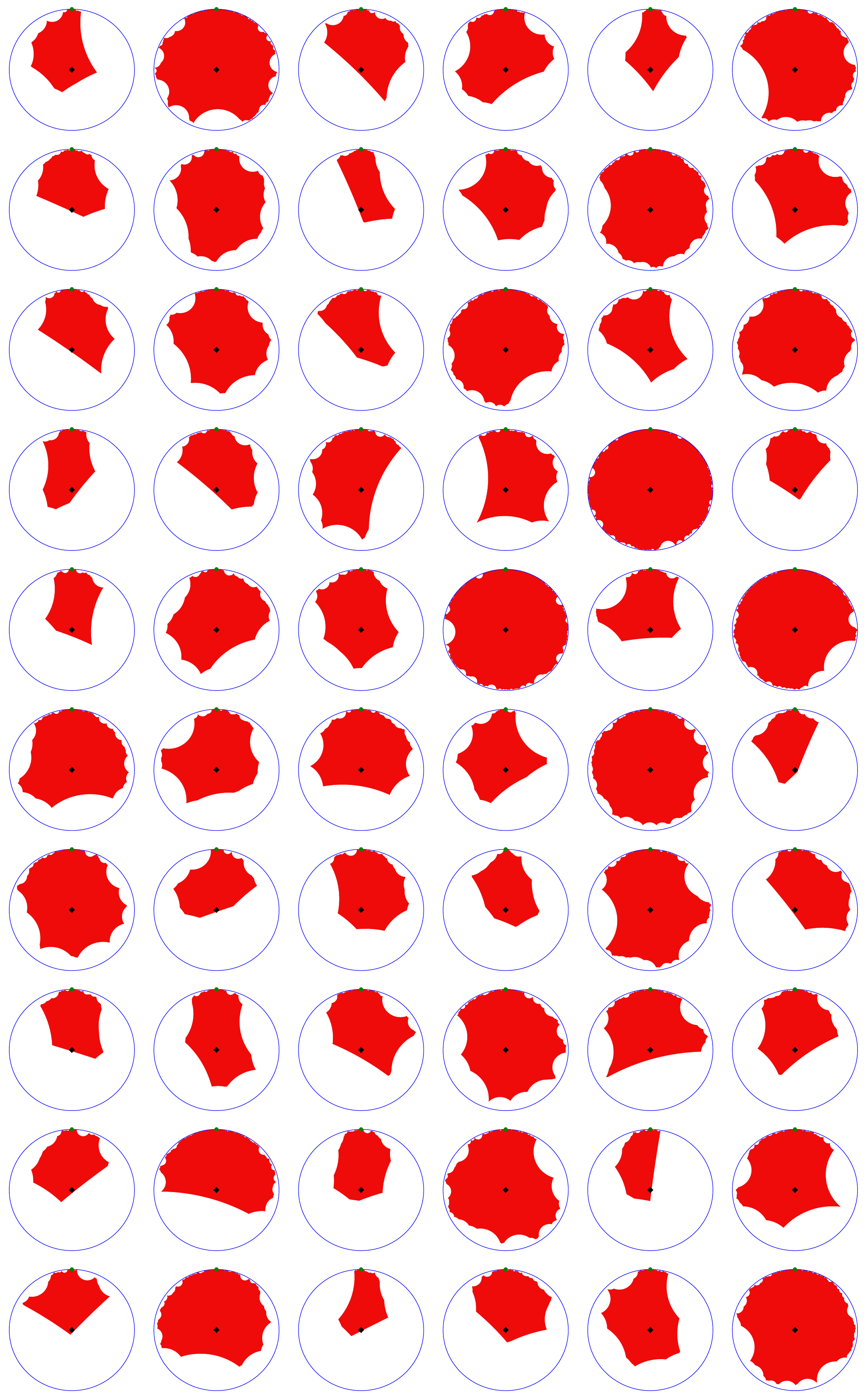}
\caption{A portrait of the cell of the origin in two dimensions given by 60 samples. Each corresponding ideal nucleus is at the top.}
		\label{f.2Dportrait}
\end{figure}
\end{remark}

\begin{prop}[\textsc{Measure face intensities $ \tilde{ \mathrm{I}}_{d,d-1}$}]
\label{p.isokawa}
For all $d \ge 2$, we have
\[
\tilde \Intn_{d,d-1} = \frac{\Gamma (\frac{d}{2})\Gamma(d)}{\Gamma(\frac{d-1}2)\Gamma (d-\frac{1}{2})}.
\]
\end{prop}

{The first few values are $2/\pi$, $4/3$, $32/(5 \pi)$, and $96/35$.
For large $d$, it is asymptotically $ d/\sqrt{2}  -\frac{9}{8\sqrt{2}}+O(\frac{1}{d})$.}

\rproof
Fix $d \geq 2$, and for  $\lambda, \varepsilon>0$,  consider the  probability that the origin is within distance less than $\varepsilon$ from the boundary $\partial \cellsl_d \coloneqq \bigcup \cellsl_{d, d-1}$ of the Poisson--Voronoi tessellation $\cells_d^{(\lambda)}$ with intensity $\lambda$ on $ \mathbb{H}_d$:
$$ h(\varepsilon,d,\lambda) \coloneqq  \mathbb{P}\bigl[ B_\varepsilon( \origin) \cap \partial{\cells_d^{(\lambda)}} \ne \varnothing \bigr].$$  By isometry invariance, $ h(\varepsilon,d,\lambda)$ can be interpreted as the mean volume (per unit volume of $ \mathbb{H}_d$) of the region within distance $\varepsilon$ from  $\partial{\cells_d^{(\lambda)}}$. We then  have the limit
  \begin{equation} \label{eq:limitisokawa} \frac{h(\varepsilon,d,\lambda)}{2\varepsilon} \xrightarrow[ \varepsilon\to 0]{}  \tilde{ \mathrm{I}}_{d,d-1}^{{( \lambda)}} \xrightarrow[ \lambda \to 0]{ \mathrm{Lem.\ \ref{lem:convintensities}}} \tilde{ \mathrm{I}}_{d,d-1}    \end{equation} 
and the one in which we interchange the order of limits, which can be evaluated explicitly thanks to our results,  
 \begin{eqnarray*}
 \lim_{\varepsilon \to 0} \lim_{\lambda \to 0} \frac{h(\varepsilon,d,\lambda)}{2\varepsilon} &\underset{  \mathrm{Thm.}\ \ref{thm:decomposition}}{=} & \lim_{\varepsilon\to 0} \frac{ \mathbb{P}\bigl[ B_\varepsilon(\origin) \cap \partial \mathcal{V}_d \ne \varnothing\bigr]}{2\varepsilon}\\
  & {=} &  - \frac{1}{2}\cdot  \left. \frac{ \mathrm{d}}{ \mathrm{d} \varepsilon} \mathbb{P}\bigl[B_\varepsilon(\origin) \subset \mathcal{C}_d\bigr] \right|_{\varepsilon=0}\\ &\underset{ \mathrm{Prop.}\ \ref{prop.holeprob}}{=}&  \frac{\Gamma (\frac{d}{2})\Gamma(d)}{\Gamma(\frac{d-1}2)\Gamma (d-\frac{1}{2})}.
  \end{eqnarray*}
  We claim that exchanging the order of limits gives the same result, whence the above display combined with \eqref{eq:limitisokawa} computes $ \tilde{\mathrm{I}}_{d,d-1}$.  

To establish our claim, suppose that we have a sequence $\mathbf C^{(j)} = \bigl((C_i^{(j)} \ST i \geq 1)\bigr)_{j \ge 1}$ of normal Voronoi tessellations that converges to a normal, ideal Voronoi tessellation $\mathbf C = (C_i \ST i \geq 1)$ as $ j \to\infty$. Fix $1 \le k \le d-1$. Denote the $\varepsilon$-neighborhood of the $k$-skeleton of a tessellation $\mathbf C$ by $\mathbf C(\varepsilon)$. Suppose that $x$ is an interior point of a $k$-face of a cell, $C_i$. Then $x$ is a limit of interior points of $k$-faces of $C_i^{(j)}$ but not of any other $k$-faces, because faces are convex and the tessellations are normal. This implies that $\varepsilon^{-1} \Vol\bigl(\mathbf C^{(j)}(\varepsilon) \cap B_1(\origin)\bigr)$ converges to $\varepsilon^{-1} \Vol\bigl(\mathbf C(\varepsilon) \cap B_1(\origin)\bigr)$ as $j \to\infty$ uniformly in $\varepsilon \in (0, 1)$, whence $\lim_{j \to\infty} \lim_{\varepsilon \to 0} \varepsilon^{-1} \Vol\bigl(\mathbf C^{(j)}(\varepsilon) \cap B_1(\origin)\bigr) =  \lim_{\varepsilon \to 0} \lim_{j \to\infty} \varepsilon^{-1} \Vol\bigl(\mathbf C^{(j)}(\varepsilon) \cap B_1(\origin)\bigr)$. In particular, this holds a.s.\ for ${\cells_d^{(\lambda)}}$ and $\cells_d$ if we take a coupling that converges a.s.\ as $\lambda \to 0$. As in the proof of Lemma \ref{lem:convintensities}, uniform integrability shows that
\eqaln{
\lim_{\lambda \to 0} \lim_{\varepsilon \to 0} \EE\bigl[\varepsilon^{-1} \Vol\bigl({\cells_d^{(\lambda)}}(\varepsilon) \cap B_1(\origin)\bigr)\bigr] 
&=
\EE\bigl[\lim_{\lambda \to 0} \lim_{\varepsilon \to 0} \varepsilon^{-1} \Vol\bigl({\cells_d^{(\lambda)}}(\varepsilon) \cap B_1(\origin)\bigr)\bigr] 
\\ &=
\EE\bigl[\lim_{\varepsilon \to 0} \lim_{\lambda \to 0} \varepsilon^{-1} \Vol\bigl({\cells_d^{(\lambda)}}(\varepsilon) \cap B_1(\origin)\bigr)\bigr]
\\ &=
\lim_{\varepsilon \to 0} \lim_{\lambda \to 0} \EE\bigl[\varepsilon^{-1} \Vol\bigl({\cells_d^{(\lambda)}}(\varepsilon) \cap B_1(\origin)\bigr)\bigr].
}
Choose $k = d-1$.  Since $\EE\bigl[\Vol\bigl({\cells_d^{(\lambda)}}(\varepsilon) \cap B_1(\origin)\bigr)\bigr] = h(\varepsilon, d, \lambda) \Vol\bigl(B_1(\origin)\bigr)$, dividing by $\Vol\bigl(B_1(\origin)\bigr)$ gives our claim.
\Qed

It would be interesting to compute the hole probabilities for lower-dimensional faces, in particular for vertices,  but the calculations seem considerably more intricate. As in the preceding proposition, this would provide one way to compute $\tilde {\mathrm I}_{d, k}$ for other $k$.

\subsection{Asymptotic properties of the zero cell: the typical cell}

In this section, we give the basic properties of the underlying stationary model defining the law of $ \mathcal{C}_d$ far away from the origin. It is obtained from the original deposition model of Theorem~\ref{thm.superposition} by removing the indicator function in the intensity \eqref{eq.intm}, which ensured that no ball would contain the origin.

More precisely, following the proof of Theorem~\ref{thm.superposition} in \Cref{s.deposition}, conditioning on the value of the smallest radius $R_1 =s$, we denote by $ \scell_{d} =\scell_d(s)$ the random closed subset obtained by removing the balls whose centers and radii $(x,\rho)$ are distributed according to a Poisson point process with intensity 
\begin{equation}\label{eq.intmes}
 2 \frac{d-1}{c_{d}}{s}\, \mathrm{d}x \otimes \frac{ \mathrm{d}\rho}{\rho^{2d-1}}.
 \end{equation}
In their natural coupling, the sets $\scell_d{(R_1)}$ and $\ocell_d$ are created by the same PPP of balls except that $\scell_d(R_1)$ uses $\Pois(R_1)$ more balls given $R_1$,
as we saw in evaluating the last integral of \eqref{eq.idr2}.
Hence $\scell_d$ uses $\Geom(1/2) - 1$ more balls than $\ocell_d$, so that $\scell_d = \ocell_d$ with probability 1/2. (The fact that an $\Exp(1)$ mixture of Poisson random variables is $\Geom(1/2) - 1$ can be seen by classifying the events of a Poisson process of rate 2 into two equally likely types, then counting those of type 1 that appear before the first event of type 2.)
In particular, the asymptotic properties of $\ocell_d$ and of $\scell_d$ are the same a.s. Clearly, the law of $\scell_d$ is $\R^{d-1}$-invariant and ergodic. Generally, we are interested only in geometric properties of $\scell_d$ that are $\mob_d$-invariant. For example, given $t > 0$, the homothety $(x, \rho) \mapsto (tx, t\rho)$ of $\uhs_d = \R^{d-1} \times \R_+$ is an isometry that preserves $\infty$. It corresponds to changing $s$ in \eqref{eq.intmes} to $t^{d-1}s$. Thus, $\scell_d(s)$ for all $s > 0$ can be coupled to be isometric to each other. Hence, in studying $\scell_d(s)$, the value of $s$ will mainly be important for the natural coupling of $\ocell_d$ with $\scell_d$.

{
By virtue of \cref{prop:bissec}, another way to describe $\scell_d(s)$ is that it is the ideal Voronoi cell of the ideal nucleus $(\infty, s)$ in $\Vor\bigl(\XX \cup \{(\infty, s)\}\bigr)$, where $\XX$ is a PPP with intensity $\mu_d$ on $\corona$. Because $\mob_d$ acts transitively on $\corona$ preserving $\mu_d$ (\cref{lem:mobius}), we have that $\scell_d(s)$ has the same law as the cell $C(Y)$ of $Y$ in $\Vor\bigl(\XX \cup \{Y\}\bigr)$ for any fixed $Y \in \corona$, provided that we restrict to the $\mob_d$-invariant $\sigma$-field.

Consider now the typical cell $\tcell_d^{(\lambda)}$ in $\Vor(\XXl)$ for $\lambda > 0$. As before, we use the nucleus of a cell for its center, so that the typical cell is the cell of a typical nucleus in $\XXl$. By Slivnyak's theorem, as explained in \cref{s.palm-hyp}, it follows that $\tcell_d^{(\lambda)}$ is the cell of $\origin$ in $\Vor\bigl(\XXl \cup \{\origin\}\bigr)$. Again, this has the same law on the $\mob_d$-invariant $\sigma$-field as the cell $C(z)$ of $z$ in $\Vor\bigl(\XXl \cup \{z\}\bigr)$ for any $z \in \HH_d$. Consider temporarily the ball model of $\HH_d$ and fix any $z \ne \origin$. Let us use the dilations $\tilde\XX^{(\lambda)}$ of \cref{remark_dilation} together with the corresponding dilations $z^{(\lambda)}$, and let $\lambda \to 0$. The cell $C(z^{(\lambda)})$ in $\mathrm{Vor}(\tilde\XX^{(\lambda)} \cup \{z^{(\lambda)}\})$ will tend to the cell of $Y \coloneqq \lim_{\lambda \to 0} z^{(\lambda)}$ in $\mathrm{Vor}\bigl((\Theta, \RR) \cup \{Y\}\bigr)$ in the Fell topology on $\HH_d$, whose law is thus that of $\scell_d$. Therefore, it is reasonable to call $\scell_d$ the \dfn{typical (ideal) cell}.}

{There is another reason for that nomenclature.
Namely, we have constructed our IPVT based on a PPP on the corona, $\corona$. We regard this as a marked PPP on $\corona$ with the mark of an ideal nucleus being its cell in $\HH_d$. In this way, we may define a typical cell of $\cells_d$ as the cell of a typical ideal nucleus. However, the action of $\mob_d$ on $\corona$ does not yield compact isotropy groups. Instead, we may use an alternative notion of Palm distribution based on disintegrations \cite[Chapter 6]{Kall:RM}. For a PPP $\XX$ on a standard Borel space $E$, %pub% with arbitrary continuous intensity measure if we want to use set notation rather than measure notation,
the Palm distribution at $x \in E$ is $\XX + \delta_x$ according to Mecke's theorem \cite[Lemma 6.15]{Kall:RM}. In our case, if $\XX$ is a PPP with intensity $\mu_d$ on $\corona$ and $Y \in \corona$, then the typical cell corresponding to $Y$ is the cell of $Y$ in $\Vor\bigl(\XX \cup \{Y\}\bigr)$. Transitivity allows us to call any fixed $Y$ typical.}

\subsubsection{Height and angle of the boundary}
The typical cell $ \scell_{d}$ can be seen as the epigraph (see Figure \ref{fig.cell_A} for the graph) of a stationary random function $ \widetilde{\mathcal{H}} \colon \mathbb{R}^{{d-1}} \to \mathbb{R}_{+}$ whose law is computed below. Specifically, for $x_{0} \in \mathbb{R}^{d-1}$, let us denote by $\widetilde{\mathcal{H}}(x_{0})$ the height of $ \partial \scell_{d}$ at $x_{0}$ and by $\Psi(x_{0})$ the angle the hypersurface $ \partial \scell_{d}$ makes with the  vertical direction at that point  (in other words, the complement of the angle between the vertical direction and the direction orthogonal to the hyper{surface}). It should be clear that this angle is defined for almost all $x_{0}  \in \mathbb{R}^{{d-1}}$, and since the law of $\bigl(\widetilde{\mathcal{H}}(x_{0}),\Psi(x_{0})\bigr)$ is independent of $x_{0}$, we denote it for short by $(\mathcal{H}, \Theta)$.

\begin{prop}[\textsc{Height and angle of the boundary}]\label{prop.hscpl} 
For $d\geq2$, the law of $(\mathcal{H}, \Theta)$ satisfies
\begin{enumerate}[label=\textup{(\roman*)}]
\item conditional on $R_{1}=s$, 
$$
\Bigl(\frac{1}{ \mathcal{H} ^{d-1}}, \sin^2(\Theta) \Bigr) \sim  {\rm{Exp}} \left( s \right)\otimes\, {\rm Beta}\Bigl(\frac{d+1}{2},\frac{d-1}{2}\Bigr);
$$ 
\item averaging on the value of $R_{1}$,
$$
\bigl( \mathcal{H}^{d-1}, \sin^2(\Theta) \bigr) \sim  \frac{1-U}{U} \otimes\, {\rm Beta}\Bigl(\frac{d+1}{2},\frac{d-1}{2}\Bigr),
$$ 
where $U$ is a uniform random variable over $[0,1]$. 
\end{enumerate}
\end{prop}
\begin{proof} \textbf{(i)} \enspace Without loss of generality, we consider the variables $(\mathcal{H}, \Theta) = \bigl(\widetilde{\mathcal{H}}(\bfz),\Psi(\bfz)\bigr)$.
Condition on $R_{1}=s$. In a preliminary step, we study the first marginal of this pair, namely, the law of $\mathcal{H}$. Recall that the intensity measure of the Poisson point process $\Pi_d$ is given by $(x,\rho) \mapsto 2s \frac{d-1}{c_{d}} \mathrm{d}x \otimes \frac{ \mathrm{d}\rho}{\rho^{2d-1}}$.  The event $\mathcal{H}\leq h$ corresponds to the event where this point process has no point inside the region $\bigl\lbrace (x,\rho) \ST \rho^{2} > |x|^{2} + h^{2}\bigr\rbrace$. Hence, changing variables to $y \coloneqq x/h$ and $\sigma \coloneqq \rho/h$ gives
\begin{equation}\label{eq.phq}
\begin{split}
\mathbb{P}[\mathcal{H} \leq h ]
&=
\text{exp}\Bigl\{-2s \frac{d-1}{c_{d}}\, \int_{\mathbb{R}^{d-1}\times \mathbb{R_{+}}} \mathrm{d}x \frac{ \mathrm{d}\rho}{\rho^{2d-1}} \mathbf{1}_{\rho^{2}\geq h^{2} + |x|^{2}}\Bigr\} \\
&=
\text{exp}\Bigl\{-2s \frac{d-1}{c_{d}h^{d-1}}\, \int_{\mathbb{R}^{d-1}\times \mathbb{R_{+}}} \mathrm{d}y \frac{ \mathrm{d}\sigma}{\sigma^{2d-1}} \mathbf{1}_{\sigma^{2}\geq 1 + |y|^{2}}\Bigr\} \\
&= \exp\Bigl\{- \frac{s}{h^{d-1}} \Bigr\}  .
\end{split}
\end{equation}
Let $g{\colon \R_+ \times [0, \pi/2) \to \R_+}$ be a nonnegative Borel function. For $|x| \le \rho$, let $A(x, \rho)$ be the event that {$\rho^2 - |x|^2 = \max\{\rho'^2-|x'|^2 \sut (x', \rho') \in \Pi_d\}$}. Then
$$
g(\mathcal{H},\Theta) = \sum_{\substack{ (x,\rho) \in \Pi_d, \\ |x|\leq \rho}} g\Bigl(\sqrt{\rho^2-|x|^2},\arccos{\frac{|x|}{\rho}}\Bigr) \mathbf{1}_{A(x, \rho)}.
$$
Also, $\Pi_d\setminus \bigl\{(x, \rho)\bigr\}$ has the same law as $\Pi_d$ itself. Thus, by Mecke's formula,
$$
\mathbb{E}\bigl[g(\mathcal{H},\Theta) \bigr]  = 2s \frac{d-1}{c_{d}} \int_{\mathbb{R}^{d-1}\times \mathbb{R_{+}}}  \frac{\mathrm{d}x \,\mathrm{d}\rho}{\rho^{2d-1}}\, g\Bigl(\sqrt{\rho^{2}-|x|^{2}}\,,\arccos{\frac{|x|}{\rho}}\Bigr)  \mathbf{1}_{|x|\leq\rho}\, \mathbb{P}\bigl[\mathcal{H} \leq \sqrt{\rho^{2}-|x|^{2}}\, \bigr]
.
$$
Using polar coordinates and \eqref{eq.phq} yields
\begin{equation}\label{eq.condexpg}
\begin{split}
\mathbb{E}\bigl[g(\mathcal{H},\Theta) \bigr]
&= 2s \frac{d-1}{c_{d}}  \ \Omega_{d-1}  \int_{\mathbb{R}_{+}\times \mathbb{R}_{+}}\mathrm{d}r \, r^{d-2}  \frac{ \mathrm{d}\rho}{\rho^{2d-1}}  g\Bigl(\sqrt{\rho^{2}-r^{2}}\,,\arccos{\frac{r}{\rho}}\Bigr)  \mathbf{1}_{r\leq\rho}\, \mathbb{P}[\mathcal{H} \leq \sqrt{\rho^2-r^2}\, ]\\
&=    \int_{\mathbb{R}_{+}\times [0,\pi/2)}  \Bigl((d-1)\frac{s}{h^{d}} \ \exp\Bigl\{-\frac{s}{h^{d-1}}\Bigr\} \Bigr) \Bigl( 2^{d}\frac{ \Gamma(\frac{d}{2}) }{\sqrt{\pi}\,\Gamma(\frac{d-1}{2})}  \sin^{d}{\theta}\, \cos^{d-2}{\theta} \,  \Bigr)  
g(h,\theta)\,\mathrm{d}h\,\mathrm{d}\theta,\\
\end{split}
\end{equation}
which implies (i). 

\textbf{(ii)} \enspace
Recalling that $R_{1} \sim \rm{Exp}(1)$, we find that $\PP[\mathcal H^{d-1} < t] = t/(t+1)$ for $t > 0$, i.e., $\mathcal H^{d-1} \sim (1-U)/U$, as desired.
\end{proof}

\subsubsection{Intensities of the Laguerre tessellation $\GKT_{{d-1}}$}\label{s.asymp0cell}
For $0 \leq k \leq d-2$, we can define the $k$-dimensional facets of the hypersurface $ \partial \scell_{d}$. After orthogonal projection, this defines a random tessellation $ \GKT_{{d-1}}$ of $ \mathbb{R}^{d-1}$ whose law is stationary; see \Cref{f.foam-views,f.foam-top}. 

\begin{figure}[!h]
 \begin{center}
 \includegraphics[width=.32\linewidth]{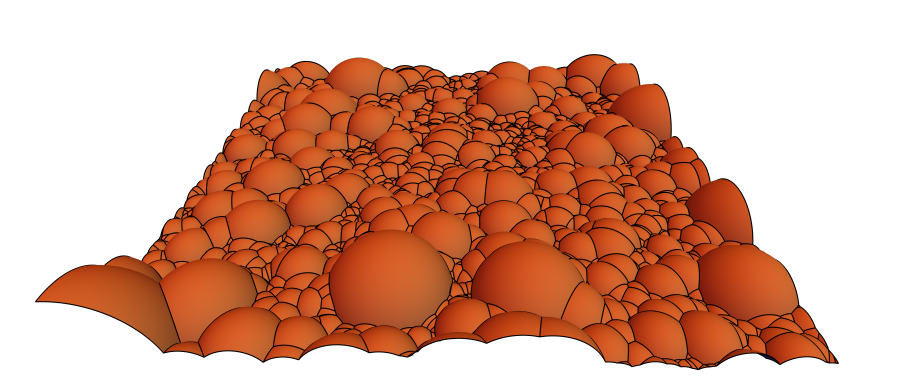}
  \includegraphics[width=.32\linewidth]{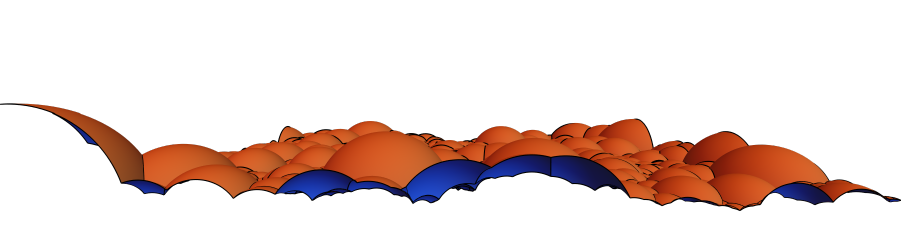}
   \includegraphics[width=.32\linewidth]{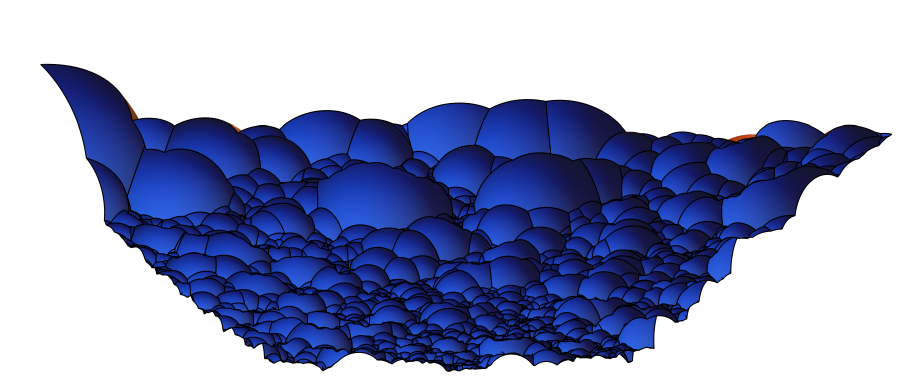}
 \caption{A piece of the hypersurface $\partial \scell_{{3}}$ seen from various angles. After orthogonal projection, it yields a tessellation $ \GKT_{{2}}$ of $  \mathbb{R}^{{2}}$.}
\label{f.foam-views}
 \end{center}
 \end{figure}
\begin{figure}[!h]
 \begin{center}
 \includegraphics[width=.5\linewidth]{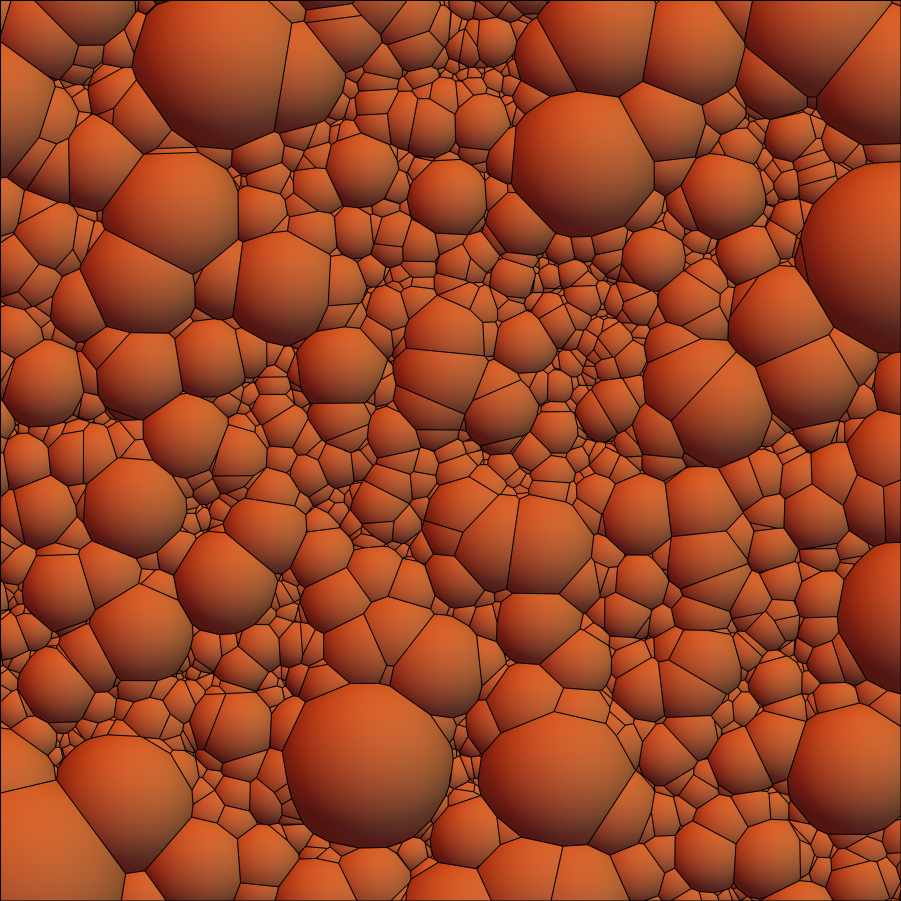}
 \caption{A piece of the hypersurface $\partial \scell_{{3}}$ seen from above, which also shows the tessellation $ \GKT_{{2}}$ of $  \mathbb{R}^{2}$.}
\label{f.foam-top}
 \end{center}
 \end{figure}

 This tessellation $\GKT_{{d-1}}$ is actually a special case of point processes studied in \cite{gusakova2022delaunay}: the relevant process for us is what they call the $\beta'$-model with certain choices for their parameters $\beta$ and $\gamma$. They use the PPP on $\R^{d-1} \times \R_-$ of nuclei and heights with intensity given in \cite[(3.2)]{gusakova2022delaunay}, where their $-h$  corresponds to our $\rho^2$: see the last line of \cite[p.~1258]{gusakova2022delaunay}. The point is that the Laguerre diagram in $\R^{d-1}$ corresponding to their power function is precisely the tessellation $\GKT_{{d-1}}$ for an appropriate choice of their parameters, $\beta$ and $\gamma$.  We use $\beta \coloneqq d$. Their other parameter  $\gamma$ is a scaling factor. To be able to make the correspondence, we need to match our intensity in \eqref{eq.intmes} with their intensity.  We will condition that $R_1 = s$. Writing \eqref{eq.intmes} as 
\vadjust{\kern2pt}%
$s\frac{d-1}{c_d} \,\ud x \,\ud(\rho^2)/(\rho^2)^d$, 
we see that for the two models to match, we need to choose $\gamma \coloneqq s \sqrt\pi\,\Gamma(d/2)/\Gamma\bigl(\frac{d-1}2\bigr)$.

In \cite[Theorem 2]{gusakova2022delaunay}, they compute moments of volumes of weighted typical cells of the Delaunay tessellation corresponding to their Laguerre diagram. Of most interest to us is the case where their $\beta \coloneqq d$ as we said above, their $\nu \coloneqq 0$, and their $s \coloneqq 1$. 
This yields after some simplification the following:

\begin{lem}\label{lem.DelVol} 
The mean (Euclidean) volume of the typical Delaunay cell of $\GKT_{{d-1}}$ is $s\cdot \DelVol_d$, where
\[
\DelVol_d
\coloneqq
\frac{\Gamma(\frac{d+1}2)}{2\cdot (d-2)!} \Bigl(\frac{\Gamma(\frac{d-1}2)}{\Gamma(d/2)}\Bigr)^{\!d} \,\frac{\Gamma(d^2/2)}{\Gamma(\frac{d^2+1}2)}.
\tag*{\qed}
\]
\end{lem}

For example, $\DelVol_d$ for $2 \le d \le 5$ takes the following values: $\pi/3$, $35/(32 \pi)$, $(16 \pi^2)/2145$, and $676039/(5971968 \pi^2)$.

\cite[Proposition 3]{gusakova2022delaunay} gives the face intensities of the Voronoi tessellation in terms of those of the Delaunay tessellation by duality. The latter are given on the same page in Theorem 6.\footnote{However, there are two typos: Here and before, the factor of 1 should be $\ui$ and the subscript of $c'$  in the last line should have $+1$  in place of $-1$. These mistakes stem from a mistake at the bottom of p.~1282 when quoting ``[14] (see Theorem 1.7 and the discussion thereafter)'' and then a slight miscalculation.}
This yields the following:

\begin{lem}[{\textsc{Counting face intensities}}]\label{lem.EuclFaceInt} 
For $0 \le k \le d-1$, the (Euclidean) counting intensity of the $k$-faces of $\GKT_{{d-1}}$ and thus of $\partial\scell_d{(s)}$ is
\[
\frac{\JJ_{d, k}}{s\cdot \DelVol_d},
\]
where $\JJ_{d, k}$ is defined in \eqref{e.defJJ}. \qed
\end{lem}

The values of $(\JJ_{d, k}/\DelVol_d \ST 0 \le k \le d-1)$ for $2 \le d \le 5$ are $(3/\pi, 3/\pi)$, $(32 \pi/35, 48 \pi/35, 16 \pi/35)$, $\bigl(2145/(16 \pi^2), 2145/(8 \pi^2), 1275/(8 \pi^2), 405/(16 \pi^2)\bigr)$, and 
\[
\Bigl(\frac{5971968 \pi^2}{676039}, \frac{14929920 \pi^2}{676039}, \frac{641088}{245245} +  \frac{9953280 \pi^2}{676039}, \frac{4461632}{245245}, \frac{820544}{245245} - \frac{995328 \pi^2}{676039}\Bigr).
\]

This allows us to deduce the mean $f$-vector of the typical cell in $\GKT_{{d-1}}$, which equals the mean $f$-vector of the typical boundary cell of $\scell_d$:

\begin{thm}[{\textsc{Mean face vectors}}]\label{t.f-vector}\!\!\!\footnote{This is a special case of \cite[Theorem 7]{gusakova2022delaunay} as stated for $\GKT_{{d-1}}$, but there are several typos there: The denominator in both right-hand sides should instead be multiplying the middle quantities, and both the middle and right-hand sides should be divided by $\mathbb{J}^{(\raisebox{-4pt}{$'$})}_{d,1}(\beta-1/2)$. The displayed equation in the proof is missing a factor on the right-hand side of $\gamma_{d-1}(\mathcal{V}_\beta)$, which equals $\mathbb{J}_{d,1}(\beta-1/2)/\mathbb{E}[\mathrm{Vol}(Z_{\beta,0})]$.} 
For $0 \le k \le d-2$, the mean number of $k$-faces of the typical cell of the boundary of $\scell_d$ is
$
(d-k)\JJ_{d, k}/\JJ_{d,d-1}.
$
\end{thm}

\rproof 
Each $k$-face belongs to $d-k$ cells a.s., being the intersection of that many cells. Thus, the result follows from \cite[Theorem 10.1.2]{SchneiderWeil} and \cref{lem.EuclFaceInt}.
\Qed

The values for the mean $f$-vectors for small $d$ are as follows: $(6, 6)$ for $d = 3$; $(572/27, 286/9,\allowbreak 340/27)$ for $d = 4$; and 
\[
\frac{(139708800 \pi^2,
{279417600 \pi^2},
{6 (62173301 + 23284800 \pi^2)},
{373039806 \pi^2})}
{62173301 - 4656960 \pi^2}
\]
for $d = 5$. When $d = 3$, the 1-skeleton of $\scell_3$ is a 3-regular graph a.s., whence one can deduce already from Euler's formula that the mean number of sides per face is 6.

\begin{remark}\label{r.f-vector}
{Comparing \cref{t.f-vector} with \cref{c.f-vector} with $\ell \coloneqq d-1$ there shows agreement when we note that each $k$-face of $\partial \scell_d$ belongs to $d-k$ cells of $\partial \scell_d$, but we cannot explain this coincidence.}
\end{remark}

\subsubsection{Intensities of $\partial \scell_d$}

Beware that \cref{lem.EuclFaceInt} deals only with the intensities of the projection of $ \partial \scell_{d}$. But we can combine it with \cref{prop.hscpl} to deduce some information on $ \partial \scell_{d} \in \mathbb{R}^{{d-1}} \times  \mathbb{R}_{+}$ in our next result:

\begin{cor}[\textsc{Mean volume of boundary cells}]\label{cor.mvol}
{T}he volume of the typical $(d-1)$-face of $\partial \scell_d{(s)}$ is
\[
\frac{d-1}{2\JJ_{d,d-1}} \Bigl(\frac{\Gamma(\frac{d-1}2)}{\Gamma(d/2)}\Bigr)^{\!d-1} \,\frac{\Gamma(d^2/2)}{\Gamma(\frac{d^2+1}2)}\, \Gamma(d-1/2).
\]
\end{cor}

\rproof
By \cref{lem.EuclFaceInt} and the ergodic theorem (e.g., \cite[Theorem 9.3.1]{SchneiderWeil}), the number of $(d-1)$-faces of $\GKT_{{d-1}}$ in a $(d-1)$-dimensional Euclidean ball of radius $\rho$ about $\bfz$ is a.s.\ asymptotic to $b_\rho\JJ_{d, d-1}/(s \cdot \DelVol_d)$, where $b_\rho$ is the Euclidean $(d-1)$-volume of that ball. By \cref{prop.hscpl}, the hyperbolic $(d-1)$-volume of the union of those faces is a.s.\ asymptotic to 
\eqaln
{b_\rho \EE[\mathcal{H}^{1-d}/\sin \Theta] 
&= b_\rho \EE[\mathcal{H}^{1-d}] \EE[1/\sin \Theta] 
= b_\rho \frac{B\bigl(d/2,(d-1)/2\bigr)}{s \cdot B\bigl((d+1)/2, (d-1)/2\bigr)}
\\ &= b_\rho \frac{\Gamma(d/2)\Gamma(d)}{s \cdot \Gamma(d-1/2) \Gamma\bigl((d+1)/2\bigr)}.}
Dividing the latter by the former gives the result.
\Qed

The first few values of these volumes for $d = 2, 3, 4, 5$ are $4/3$, $35/(12 \pi)$, $1024 \pi/6075$, and $52055003/( 746079612 - 55883520 \pi^2)$.

\begin{remark}[\textsc{{Measure $(d-1)$-face intensities}}]\label{r.facet-inten}
We may regard ${\scell}_d$ as the union of the regions over its boundary $(d-1)$-faces (that is, the cones of those faces with apex at $\infty$). We may then compare the typical face volume given by \cref{cor.mvol} with the volume of its cone, which is, by similar reasoning, $\EE[\int_{\mathcal H}^\infty dy/y^d] = \EE[\mathcal{H}^{1-d}/(d-1)] = 1/s(d-1)$ divided by $\JJ_{d,d-1}/(s\cdot\DelVol_d)$. Dividing the typical face volume by this yields 
\[
\frac{(d-1)\Gamma(d/2)\Gamma(d)}{\Gamma(d-1/2) \Gamma\bigl((d+1)/2\bigr)}.
\]
This agrees with the measure face intensity we obtained in \cref{p.isokawa} after accounting for the fact that we are comparing here to the volume of the cone on only one side of the $(d-1)$-face, although we do not have an explanation for this coincidence. Of course, we did not need \cref{lem.EuclFaceInt}, nor even the distribution of $\mathcal H$; the result is simply $\EE[\mathcal{H}^{1-d}/\sin \Theta]/\EE[\int_{\mathcal H}^\infty dy/y^d] = (d-1)\EE[1/\sin \Theta]$.
\end{remark}

\begin{remark}[\textsc{Hyperbolic vs.\ Euclidean volumes of Delaunay simplices}]\label{r.Del-vols}
If we compare $\DelVol_d$ with $\IDV_d$ defined in \eqref{e.defIDV}, we discover that $\IDV_d = \frac{d+1}{d-1}\DelVol_d$ for all $d \ge 2$. A heuristic explanation for this coincidence follows. We desire to explain why the hyperbolic intensity of the vertices of $\cells_d$ (i.e., $1/\IDV_d$) is equal to $s(d-1)/(d+1)$ times the Euclidean intensity of the vertices of $\GKT_{d-1}$ conditional on $R_1 = s$ (i.e., $1/(s\cdot \DelVol_d$)). First, note that the latter is the Euclidean intensity of the vertices of $\scell_d$, disregarding their heights. Next, if we think of the hyperbolic intensity of the vertices of $\scell_d$ per unit volume of the zero cell $\ocell_d$, then we can convert Euclidean $(d-1)$-volume to hyperbolic $d$-volume by multiplying by $\EE[\int_{\mathcal H}^\infty dy/y^d] = 1/s(d-1)$, thus multiplying the Euclidean intensity by $s(d-1)$. However, each vertex belongs to $d+1$ cells of $\cells_d$, so to get the hyperbolic intensity of vertices per unit volume of $\HH_d$, we should divide this by $d+1$.

Alternatively, we can say that the ideal Delaunay simplices associated to the vertices of $\scell_d$ are the simplices whose ideal vertices are $\infty$ and the vertices of the Delaunay simplices of $\GKT_{d-1}$ (which are some of the nuclei of the Laguerre diagram). The portions of their hyperbolic volumes that lie in the cell of $\infty$ is $1/s(d-1)$ times their Euclidean volumes. Because they have $d+1$ vertices, their total hyperbolic volumes are $\frac{d+1}{s(d-1)}$ times their Euclidean volumes.
\end{remark}

We conclude this section by computing the intensity of the vertices in $ \partial \scell_{d}$. Note that the vertices do not form a Poisson point process; they are merely a point process.

\begin{prop}[\textsc{Vertex intensity}]\label{prop.vertint}
{T}he process of vertices on the hypersurface $\partial \scell_d{(s)}$ has the intensity
\[
 \frac1{s \cdot \DelVol_d} \Leb \otimes \zeta,
\]
where $\zeta$ is the probability distribution of a random variable $Z$ when $1/Z^{d-1}$ has law $\text{\rm Gamma}(d+1, s)$. Thus, for $z > 0$,
\[
\ud\zeta(z)
=
\frac{(d-1)s^d}{d!}\frac{\mathrm{e}^{-\frac{s}{z^{d-1}}}}{z^{d^{2}}} \, \mathrm{d} z .
\]
\end{prop}

\begin{proof}
Because the vertices of $\GKT_{{d-1}}$ have finite intensity, the vertices of $\partial \scell_d$ form a marked point process. By stationarity, the intensity measure of the vertices of $\partial\scell_d$ is a constant times $\Leb \otimes \zeta$ for some probability measure, $\zeta$ \cite[Theorem 3.5.1]{SchneiderWeil}. By \cref{lem.EuclFaceInt}, the constant is $1/(s\cdot \DelVol_d)$. It remains to find $\zeta$.

Let $f$ be a nonnegative Borel function defined on $\uhs_d = \R^{d-1} \times \R_+$. We will use $\harp{x}$ to denote points in $\R^{d-1}$. %
For $\sigma \coloneqq (\harp{x_1}, \dots, \harp{x_d}, \rho_1, \dots, \rho_d) \in (\mathbb{R}^{d-1})^{d}\times (\mathbb{R}_{+})^{d}$, write $v(\sigma) \coloneqq \bigcap_{i=1}^{d} \partial B(\harp{x_{i}},\rho_{i})$. 
Let $A$ be the set of $\sigma$ where $v(\sigma)$ 
is a single point.
By the Slivnyak--Mecke formula and \eqref{eq.phq}, there is a constant $\alpha_{d{, s}}$ whose value does not concern us such that 
\begin{equation}\label{eq.msf}
\mathbb{E}\Biggl[\sum_{\substack{v \text{ a vertex}\\[1pt] \text{of}\;  \scell_d}}f(v) \Biggr] = \alpha_{d{, s}} \int_A f\bigl(v(\sigma)\bigr)\mathrm{e}^{-\frac{s}{z^{d-1}}} \prod_{i=1}^d \mathrm{d}\harp{x_{i}} \frac{\ud\rho_{i}}{\rho_{i}^{2d-1}},
\end{equation}
where $z = z(\sigma)$ is the $d$th coordinate of $v(\sigma)$. 
Let $\Sigma \coloneqq \{ \sigma \in A \ST z(\sigma) = 1\}$.
Note that for all $t > 0$ and $\sigma \in  \Sigma$, we have $z(t \cdot \sigma) = t$.
There is a function $h \colon \Sigma \to \R_+$ such that
\[
{\II{A}} \prod_{i=1}^d \mathrm{d}\harp{x_{i}} \frac{\ud\rho_{i}}{\rho_{i}^{2d-1}}
=
h(\sigma) \,\ud \sigma \, \frac{\ud t}{t^{d^2}}
\]
for integration on $A = \Sigma \times \R_+$. 
{Writing $v(t \cdot \sigma) = \bigl(\harp x(\sigma), t\bigr)$, we obtain}

\[
\mathbb{E}\Biggl[\sum_{\substack{v \text{ a vertex}\\[1pt] \text{of}\;  \scell_d}}f(v) \Biggr]
=
\alpha_{d{, s}} \int_{\Sigma \times \R_+} f\bigl(v(t\cdot \sigma)\bigr) \ue^{-\frac{s}{t^{d-1}}} h(\sigma) \,\ud \sigma \,\frac{\ud t}{t^{d^2}}
=
{\alpha_{d, s} \int_{\uhs_d} f(\harp x, t) \ue^{-\frac{s}{t^{d-1}}} h'(\harp x) \,\ud\harp x \,\frac{\ud t}{t^{d^2}}}
\]
{for another function $h'\colon \R^{d-1} \to \R_+$ arising from}
 changing coordinates for $\sigma {\in \Sigma}$ in terms of $\harp{x}$ and other variables---which we integrate out. 
\Qed

{
\begin{remark}[\textsc{Separations from the ideal nucleus at $\infty$}]\label{r.sep-from-infty}
{T}he separations of vertices on the hypersurface $\partial \scell_d$ to the ideal nucleus at $\infty$ are $\mob_d$-invariant and have
law $\text{\rm Gamma}(d+1)${.} This is because the separation of $(\harp x, z)$ from $(\infty, {1})$ is ${1}/z^{d-1}$ by \cref{lem:evaldist}, and \cref{prop.vertint} showed that the law of ${1}/Z^{d-1}$ is $\mathrm{Gamma}(d+1)${.}
Thus, the distribution of separations is the same as that of the typical ideal Voronoi vertex, which we found in \cref{p.typ-sep}, but we have no explanation for the coincidence.
\end{remark}
}

\section{Ideal Poisson--Voronoi tessellations on regular trees} \label{sec.trees}
In this section, we apply our abstract results of \cref{sec:conv} to the $k$-regular tree ($k \ge 3$), denoted by $ \mathbb{T}_{k}$ with origin vertex $\origin$. We regard $ \mathbb{T}_{k}$ as a real tree by identifying each edge with a unit-length real segment, 
so that it carries a natural length measure $\mu$ induced by Lebesgue measure on its edges, and a geodesic distance $ \mathrm{d}_{ \mathbb{T}_{k}}$. As described in the introduction, one can then consider a Poisson process of points $ \mathbf{X}^{(\lambda)} = (X_{1}^{(\lambda)}, \ldots )$ which are ranked according to their increasing distances to $\origin$. In contrast to the case of hyperbolic spaces, here the asymptotic law of delays will depend upon the fractional part of $\log_{k-1}(\lambda)$ as $ \lambda \to 0$. More precisely, for $\lambda  \in (0,1) $ define 
$$ \ell_{\lambda} \coloneqq  - \lfloor \log_{k-1} (k \lambda) \rfloor, \quad \mbox{ and put   }\quad \xi_{\lambda} \coloneqq  (k-1)^{\ell_{\lambda}} \cdot k \cdot \lambda \in [1,k-1).$$
Let us introduce the proto-delay process
$$  \mathbb{D}_{i}^{(\lambda)} = \mathrm{d}_{ \mathbb{T}_{k}}( \origin, X_{i}^{(\lambda)}) - \ell_{\lambda}.$$
\begin{prop}[\textsc{Delays on trees}] \label{p.delay-conv-trees}
As $\lambda \to 0$ with \emph{$\xi_{\lambda}  = \xi \in [1,k-1)$ fixed}, the proto-delay process $( \mathbb{D}_{i}^{(\lambda)} \ST i \geq 1)$ converges in law towards a Poisson process on $ \mathbb{R}$ with intensity 
$$ \xi \cdot (k-1)^{m}  \quad \mbox{over the interval} \quad [m, m+1) \quad \mbox{for } m \in \mathbb{Z}.$$
\end{prop}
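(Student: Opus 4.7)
The plan is to reduce the claim to a Poisson-mapping computation on $\mathbb{R}_+$: first project $\mathbf{X}^{(\lambda)}$ to its distance-to-origin process, then apply the shift by $\ell_\lambda$, and finally observe that the resulting intensity matches the announced one so precisely that no asymptotic argument is needed---the two intensities actually coincide on compact sets for $\lambda$ small enough.

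First I would compute the volume growth of $\mathbb{T}_k$ viewed as a real tree. Since exactly $k(k-1)^n$ unit-length edges connect graph-distance $n$ to graph-distance $n+1$ away from $\origin$, the function $r \mapsto \mu(B(\origin, r))$ is piecewise linear with slope $k(k-1)^n$ on each interval $(n, n+1)$ for $n \geq 0$. Applying the mapping theorem for Poisson processes to the function $\mathrm{d}_{\mathbb{T}_k}(\origin, \cdot)$, the pushforward $\{\mathrm{d}_{\mathbb{T}_k}(\origin, X_i^{(\lambda)}) : i \geq 1\}$ is therefore a PPP on $\mathbb{R}_+$ with intensity $\lambda \, k \, (k-1)^{\lfloor r \rfloor}\, \mathrm{d}r$.

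Shifting by $\ell_\lambda$ turns this into a PPP on $[-\ell_\lambda, \infty)$ whose intensity on the open interval $(m, m+1)$, for each integer $m \geq -\ell_\lambda$, equals $\lambda \, k \, (k-1)^{m + \ell_\lambda} = \xi_\lambda \, (k-1)^m$ by the very definition $\xi_\lambda = (k-1)^{\ell_\lambda} \cdot k \cdot \lambda$. Keeping $\xi_\lambda$ fixed, the right-hand side does not depend on $\lambda$, and since $\ell_\lambda \to \infty$ as $\lambda \to 0$, for any $M > 0$ the restriction of the delay process to $[-M, \infty)$ agrees, for $\lambda$ small enough, with a Poisson point process of the target intensity. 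Convergence in law of point processes on $\mathbb{R}$ follows immediately from this eventual equality on every compact window.

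There is essentially no obstacle to this argument: it rests only on careful bookkeeping of exponents, and the definition of $\ell_\lambda$ is tailor-made precisely so that $\xi_\lambda \in [1, k-1)$ becomes a genuine continuous parameter. The noteworthy contrast with the hyperbolic case is that here the intensity does not smooth out as $\lambda \to 0$, but remains periodic in $\log_{k-1}\lambda$, which is exactly why the fractional part $\xi_\lambda$ survives in the limit.
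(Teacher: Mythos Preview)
Your argument is correct and follows essentially the same route as the paper: compute the pushforward intensity of the distance process, then shift by $\ell_\lambda$ and identify the resulting density on each interval $[m,m+1)$ as $\xi_\lambda(k-1)^m$. Your observation that the intensities actually coincide on compact windows for $\lambda$ small enough (rather than merely converging) is a nice sharpening of the paper's wording, but the underlying idea is identical.
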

\begin{proof} For fixed $\lambda >0$, notice that the total intensity of the points falling in edges at distance $p\geq 0$ (i.e., whose closest point is at distance $p$) from the origin is equal to $ k \lambda (k-1)^{p}$. Write then $ p= \ell_{\lambda} + m$ to see that for $m$ fixed, as $\lambda \to 0$ with $\xi_{\lambda}$ fixed, this intensity converges to  $\xi (k-1)^m$, which concludes the proof. \end{proof}

The convergence to points on the boundary is trivial in this case. Recall that the Gromov boundary $ \partial \mathbb{T}_{k}$ of  the $k$-regular tree can be identified with the space of all infinite rays starting from the origin equipped with the natural local topology. It has a natural uniform measure. Given this, and the obvious fact that conditionally on their distances to the origin, the points of $ \mathbf{X}^{(\lambda)}$ are i.i.d.\ on the spheres prescribed by their distances, it follows that they converge towards i.i.d.\  uniform points on $ \partial \mathbb{T}_{k}$.  As in \cref{s.IPVT-stronger}, one can check that the ideal diagrams are a.s.\ nondegenerate (since the delays are a.s.\ distinct), and  we deduce the convergence of the Voronoi tessellations when $\lambda \to 0$ with $\xi_{\lambda}$ fixed. In other words, we have a one-parameter family of ideal tessellations $ \mathcal{I}_{\xi}$ on $ \mathbb{T}_{k}$ parametrized by $\xi \in [1,k-1)$ obtained as limit of Poisson--Voronoi tessellations on $ \mathbb{T}_{k}$. Although those ideal tessellations are indeed pairwise different (see \cref{p.different} below), here is a surprising fact:
\begin{theorem}[\cite{bhupatiraju}]
\label{t.treeuniq}
The restriction of $ \mathcal{I}_{\xi}$ to the vertices of\/ $ \mathbb{T}_k$ has the same law for all $\xi \in [1,k-1)$.
\end{theorem}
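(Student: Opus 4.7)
The plan is to show that, for every finite vertex set $V_0 \subset V(\mathbb{T}_k)$, the partition induced on $V_0$ by the cells of $\mathcal{I}_\xi$ has a law that does not depend on $\xi \in [1,k-1)$; Kolmogorov consistency then lifts this to the full vertex partition. The reduction proceeds by slicing the ideal boundary according to horofunction values: set $\theta \sim_{V_0} \theta'$ iff $\mathrm{d}_\theta(v) = \mathrm{d}_{\theta'}(v)$ for every $v \in V_0$. Because $\origin$ and each $v \in V_0$ are vertices of $\mathbb{T}_k$, all horofunction values are integers, so $\sim_{V_0}$ has only finitely many classes $T_1,\ldots,T_N$, each carrying a well-defined horofunction vector $\mathbf{d}_T=(\mathrm{d}_T(v))_{v \in V_0} \in \mathbb{Z}^{V_0}$ and a positive mass $\mu_T$ under the uniform measure on $\partial\mathbb{T}_k$.

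By the restriction property of Poisson processes, the nuclei of $\mathcal{I}_\xi$ with angle in $T$ form an independent PPP on $\mathbb{R}$ of intensity $\mu_T \xi (k-1)^{\lfloor s\rfloor}\,ds$; let $M_T$ denote its (a.s.\ uniquely attained) minimum. The critical observation is that within a single type $T$ the horofunction $\mathrm{d}_\theta(v)$ does not depend on $\theta \in T$, so the argmin over type-$T$ nuclei of $\mathrm{d}_\theta(v)+\delta$ is the \emph{same} nucleus for every $v \in V_0$, namely the one realizing $M_T$. It follows that $v$ and $v'$ lie in the same cell of $\mathcal{I}_\xi$ iff their winning types coincide, where
\[
T^*_v := \operatorname*{argmin}_T \bigl[\mathbf{d}_T(v) + M_T\bigr].
\]

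The algebraic core of the proof is the reparametrization $U_T := \mu_T\,\xi\, F(M_T)$, with $F(t) := \int_{-\infty}^{t}(k-1)^{\lfloor s \rfloor}\,ds$. A direct computation gives $F(t)=(k-1)^{\lfloor t \rfloor}\bigl(\tfrac{1}{k-2}+\{t\}\bigr)$, hence the semigroup identity $F(t+n)=(k-1)^n F(t)$ for all $n \in \mathbb{Z}$; the mapping theorem then shows that $(U_T)_T$ is an iid family of Exp$(1)$ variables, independently of $\xi>0$. Applying the monotone function $F$ to a comparison $\mathbf{d}_T(v)+M_T \leq \mathbf{d}_{T'}(v)+M_{T'}$ and invoking the semigroup identity rewrites it equivalently as
\[
\mu_{T'}\, U_T \;\leq\; (k-1)^{\mathbf{d}_{T'}(v)-\mathbf{d}_T(v)}\, \mu_T\, U_{T'},
\]
a condition that does not involve $\xi$. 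Therefore the map $(U_T)_T \mapsto (T^*_v)_{v \in V_0}$ is a $\xi$-free deterministic function of iid exponentials, and the law of the partition on $V_0$ is $\xi$-invariant.

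The main technical point I expect to handle carefully is the reduction in the second paragraph: one must verify almost surely that there are no ties between $\mathbf{d}_T(v)+M_T$ and $\mathbf{d}_{T'}(v)+M_{T'}$ for distinct $T, T'$, so that the cell containing $v$ is genuinely determined by $T^*_v$. This is essentially free since the $M_T$ have absolutely continuous, mutually independent laws and the shifts $\mathbf{d}_{T'}(v)-\mathbf{d}_T(v)$ are fixed integers, forcing any equality to occur on a null set. Once this is secured, the rest is the change-of-variable calculation exploiting the exponential self-similarity $F(t+1)=(k-1)F(t)$ that makes $\xi$ drop out.
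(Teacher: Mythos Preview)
Your argument is correct. The key steps---partitioning $\partial\mathbb{T}_k$ into finitely many horofunction types relative to $V_0$, identifying the winning nucleus in each type as the one realising the minimum delay $M_T$, and then eliminating $\xi$ via the change of variables $U_T=\mu_T\,\xi\,F(M_T)$ using the integer-shift identity $F(t+n)=(k-1)^nF(t)$---all go through as you describe. The tie-breaking issue is indeed harmless for the reason you give, and the reduction of the vertex partition on $V_0$ to the tuple $(T^*_v)_{v\in V_0}$ is clean because the within-type minimiser is a.s.\ unique and common to all $v\in V_0$.

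Note, however, that the paper does \emph{not} supply its own proof of this theorem: it is quoted from \cite{bhupatiraju}, whose method (as the paper reports) is to compute all finite-dimensional marginals of the vertex partition explicitly and observe that the answers are $\xi$-free. Your route is genuinely different and more conceptual: rather than evaluating the marginal probabilities, you exhibit a single measurable, $\xi$-independent map from an i.i.d.\ Exp$(1)$ family $(U_T)_T$ to the induced partition. This explains \emph{why} $\xi$ disappears---it is absorbed into the normalising constant of the cumulative intensity $F$, and only ratios $F(M_T+\mathbf{d}_T(v))/F(M_{T'}+\mathbf{d}_{T'}(v))$ matter, which by the semigroup property are $\xi$-free. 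The explicit-marginals approach, by contrast, yields closed formulas such as the root-degree distribution displayed after the theorem, which your argument does not produce directly. One small point worth stating explicitly is that you are using $k\ge 3$ so that $F$ is finite (and the semigroup factor $(k-1)^n$ is nontrivial); for $k=2$ the tree is a line and the construction degenerates.
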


To be more precise, \cite{bhupatiraju} did not work with our model, but, rather, with the discrete Bernoulli--Voronoi tessellations solely on the vertices of $\TT_k$, where each vertex is independently a nucleus with probability $p \in (0, 1)$. In addition, to break ties in distance, each nucleus is given an independent, uniform $[0, 1]$ random label, so that a vertex belongs to the closest nucleus with the smallest label. To see the relationship between these two processes in the limit, let $E(\TT_k)$ be the union of real segments corresponding to the edges and $V(\TT_k)$ be the vertices. Define $f\colon E(\TT_k) \to V(\TT_k) \times [0, 1]$ by $f(x) \coloneqq (v, \ell)$, where $v$ is the endpoint closest to $\origin$ of the edge containing $x$ and $\ell$ is the distance from $x$ to $v$. Let $\YY^{(\lambda)}$ be obtained from $\bigl\{ f(x) \ST x \in \XX^{(\lambda)} \bigr\}$ by keeping only those $f(x)$ with smallest second coordinate when there is more than one pair having the same first coordinate. The first coordinates of $\YY^{(\lambda)}$ restricted to $V(\TT_k) \setminus \{\origin\}$ form a Bernoulli($p$) process with $p \coloneqq 1 - \ue^{-\lambda}$, while the second coordinates are i.i.d., continuous random variables. Fix a finite-radius ball $B$ of $\TT_k$ about $\origin$. Provided no vertices of $\YY^{(\lambda)}$ lie in $V(B)$, the Voronoi partition on $V(B)$ induced by $\XX^{(\lambda)}$ equals the Voronoi partition on $V(B)$ induced by $\YY^{(\lambda)}$. When $\lambda$ and thus $p$ are small, this proviso holds with high probability. Hence, if the limit exists of low-intensity Bernoulli--Voronoi tessellations on $\TT_k$, then so does the limit of the restriction to $V(\TT_k)$ of the Poisson--Voronoi tessellations. 

\cite{bhupatiraju} proved the existence of the limit of low-intensity Bernoulli--Voronoi tessellations on regular trees by explicitly calculating the probabilities of all elementary cylinder events, showing that they are polynomials in $p$ (with rational coefficients). For example, \cite[Lemma 2.5]{bhupatiraju} shows that the degree of the root equals $j \in [1, k]$ with limiting probability 
\[
\frac1{(k-2)(j-1) + 1}\cdot\frac1{\prod_{i=j}^{k-1} (1+\frac1{i(k-2)})}. 
\]
It is unclear whether there are any Cayley graphs other than trees where there is a unique low-intensity limit of Bernoulli--Voronoi tessellations, except when the limit is trivial; see \cite{bhupatiraju}.

The following proposition is the analogue of the corresponding result, \cref{p.tailEnd}, for $\HH_d$.

\begin{prop}[\textsc{One end on trees}]\label{p.OneEndTrees}
For each $\xi \in [1, k-1)$, a.s.\ no cell in the IPVT $\mathcal I_\xi$ on $\TT_k$ contains a biinfinite geodesic.
\end{prop}

\rproof
Let the delays be $(\DD_i \ST i \ge 1)$ in increasing order corresponding to the ideal boundary points ${\Theta}_i$.  By \cref{p.delay-conv-trees}, we have $\PP\bigl[\DD_1 < -n\bigr] = O\bigl((k-1)^{-n}\bigr)$ as $n \to\infty$ and $\PP\bigl[\DD_1 \ge n\bigr] \le \ue^{- \xi (k-1)^{n-1}}$. For any vertex $x$, let $\DD_i^x \coloneqq \ud_{{\Theta}_i}(x) + \DD_i$ be the separation of $({\Theta}_i, \DD_i)$ from $x$. Then $\min_i \DD_i^x$ has the same distribution as $\DD_1$ by automorphism invariance. Let $S'_{2n}$ be the set of vertices $x$ such that the geodesic from $x$ to ${\Theta}_1$ contains the geodesic from $\origin$ to ${\Theta}_1$ as well as precisely $2n$ additional edges. For $x \in S'_{2n}$, we have $\DD_1^x = \DD_1 + 2n$. Also, any such $x$ belongs to the cell of $\origin$ iff $\DD_1^x = \min_i \DD_i^x$. Let $A_n$ be the event that some $x \in S'_{2n}$ belongs to the cell of $\origin$. Let $S_{2n}$ be the sphere of radius $2n$ about $\origin$, which has cardinality $k(k-1)^{2n-1}$.  Then
\eqaln
{\PP(A_n) 
&\le
\PP\bigl[\DD_1 < -n\bigr] + 
\PP\bigl[\exists x \in S'_{2n}\enspace \DD_1 \ge -n,\; \DD_1^x < \DD_i^x \text{ for all } i > 1\bigr] 
\\ &\le
\PP\bigl[\DD_1 < -n\bigr] + 
\PP\bigl[\exists x \in S'_{2n}\enspace \min_i \DD_i^x \ge n \bigr] 
\\ &\le
\PP\bigl[\DD_1 < -n\bigr] + 
\PP\bigl[\exists x \in S_{2n}\enspace \min_i \DD_i^x \ge n \bigr] 
\\ &\leq 
O\bigl((k-1)^{-n}\bigr) + k(k-1)^{2n-1} \ue^{- \xi (k-1)^{n-1}}
\to 0 \quad \mbox{as } n \to\infty,
}%
where the last inequality follows from the second sentence of the proof.

It follows that $\PP\bigl(\bigcap_n A_n\bigr) = 0$, in other words, $\origin$ does not belong to a biinfinite geodesic in the cell of $\origin$ a.s. Since $\mathcal I_\xi$ is invariant under all automorphisms of $\TT_k$, the same holds for every vertex in place of $\origin$, whence all cells have only one end a.s.
\Qed

We use this result to prove our assertion that $ \mathcal{I}_{\xi}$ is different for different $\xi \in [1, k-1)$:

\begin{prop}\label{p.different}
There is a measurable function $f$ on tessellations of\/ $\TT_k$ such that for each $\xi \in [1, k-1)$, we have $f(\mathcal I_\xi) = \xi$ a.s. 
\end{prop}

\rproof
Let $V$ be a tessellation.  Define $f$ to be 0 if some cell of $V$ does not have a unique end. If each cell has a unique end, then let $\theta_1$ be the end of the cell of $\origin$. Set $g(\theta_1) \coloneqq 0$. There is a unique extension of $g$ to the set $E$ of ends of all cells such that when two cells with ends $\theta$ and $\theta'$ share a boundary point $x$, we have $g(\theta) + \ud_\theta(x) = g(\theta') + \ud_{\theta'}(x)$. When $V$ is the tessellation corresponding to ideal nuclei $({\Theta}_i, \DD_i)$, we have that $g({\Theta}_i) = \DD_i - \DD_1$ a.s. Define $G(t) \coloneqq |\{\theta \in E \ST g(\theta) \le t\}|$ for $t \ge 0$. Finally, let
\[
f(V)
\coloneqq
\limsup_{t \to\infty} \int_0^1 \frac{G(t+s)}{(k-1)^{t+s}} \,\ud s\Bigm/\int_0^1 \frac{s+1/(k-2)}{(k-1)^s} \,\ud s.
\]
In order to show that $f(\mathcal I_\xi) = \xi$ a.s., consider the Poisson process on $\R$ described in \cref{p.delay-conv-trees}, and denote by $N(t)$ the number of its points at most $t$ for $t \in \R$. Now $\lim_{t \to\infty} N(t)/\EE\bigl[N(t)\bigr] = 1$ a.s.\ with
\[
\EE\bigl[N(t) \bigr]
=
\xi \Bigl (\sum_{m < \flr{t}} (k-1)^m + (k-1)^{\flr{t}} \bigl(t - \flr{t}\bigr) \Bigr)
=
\xi \Bigl ((k-1)^{\flr{t}} \bigl(t - \flr{t} + \frac1{k-2}\bigr) \Bigr),
\]
whence  
\[
\frac{N(t)}{(k-1)^t}
\sim
\xi \frac{t - \flr{t} + 1/(k-2)}{(k-1)^{t-\flr t}} \mbox{ a.s.\ as } t \to\infty.
\]
At every sample where this holds, we obtain
\[
\lim_{t \to\infty} \int_0^1 \frac{N(t+s)}{(k-1)^{t+s}} \,\ud s = \xi \int_0^1 \frac{s+1/(k-2)}{(k-1)^s} \,\ud s.
\]
On the other hand, $G(t) = N(t+\DD_1)$ when $V = \mathcal I_\xi$, which proves that $f(\mathcal I_\xi) = \xi$ a.s.
\Qed

Similarly to the last part of the proof of \cref{t.factor}, one can extend this argument to prove that for each $\xi \in [1, k-1)$, there is an isomorphism between the Poisson process on $\R$ described in \cref{p.delay-conv-trees} and the IPVT on $\TT_k$ that is equivariant with respect to all automorphisms of $\TT_k$.
 
\section{Future directions}\label{s.open}

{
Many interesting questions remain, of which we present a few. We have not given substantial thought to all of them.

%pub%What is the probability $p_d$ that a typical Voronoi vertex lies inside its Delaunay simplex? It is the same in Euclidean space and hyperbolic space with any intensity, depending only on the dimension. This can be seen from the spherical Blaschke--Petkantschin formulas, which show that if $(U_1, \dots, U_{d+1})$ is chosen on $\bigl(\Sph_{d-1}\bigr)^{d+1}$ with density proportional to the volume of the simplex they generate, then $p_d$ equals the probability that the convex hull of $(U_1, \dots, U_{d+1})$ contains $\origin$.  A simple calculation shows that $p_2 = 1/2$. However, we do not have an exact expression for this probability in higher dimensions. For comparison, a well-known result of Savage shows that if $(U_1, \dots, U_{d+1})$ is chosen uniformly on $\bigl(\Sph_{d-1}\bigr)^{d+1}$, then the probability that the convex hull of $(U_1, \dots, U_{d+1})$ contains $\origin$ is $1/2^d$.  

\begin{question}[\textsc{Other manifolds}]
The recent paper \cite{FMW} uses ideal Poisson--Voronoi diagrams defined directly on a generalization of the corona, similarly to \cref{d.IVT}, to establish some results in geometric group theory.\footnote{Their work was independent of ours. They were inspired by \cite{BudzinskiCurienPetri} and were unaware of \cite{bhupatiraju}.} They do not establish whether such diagrams are limits of Poisson--Voronoi diagrams. 
For which homogeneous Riemannian manifolds does the limit exist of Poisson--Voronoi diagrams as the intensity of the process of nuclei tends to 0? For example, does it exist on the Riemannian product $\HH_2 \times \R$?
\end{question}

%pub%Suppose that $\lambda \mapsto v^{(\lambda)}$ is a continuous choice of vertices in $\mathrm{Vor}(\tilde \XX^{(\lambda)})$ for the dilations of \cref{remark_dilation}.  While $\ud_{\HH_d}(\origin, v^{(\lambda)})$ may not be monotone decreasing as a function of $\lambda$ (consider the example of 3 nearly collinear nuclei in dimension 2), must it have a finite limit as $\lambda \to 0$? Similarly, if $\lambda \mapsto v^{(\lambda)}$ is a continuous choice of Voronoi vertices in \cref{l.UHSdilate}, then is $v_d^{(\lambda)}$ monotone increasing as a function of $\lambda$ with positive limit as $\lambda \to 0$?

\begin{question}[\textsc{Equivariant injectivity}]
\cref{t.factor} shows that certain equivariant maps are injective a.s.
Is the map that sends an IPVT to the set of its vertices a.s.\ injective?
Is the map that sends an IPVT to its IPDT a.s.\ injective? 
\end{question}

\begin{question}[\textsc{Typical neighboring vertices}]
What is the joint distribution of the neighboring vertices of a typical vertex?
\end{question}

\begin{question}[\textsc{Face intensities and hole probabilities}]
What are the values for the measure face intensities, $\tilde\Intn_{d, k}$ for $1 \le k \le d-2$? Related to this: what are the hole probabilities for $k$-faces with $0 \le k \le d-2$?
\end{question}

%pub%Is there a simple, direct description of the law of the ideal Delaunay simplices containing $\infty$ in the upper half-space model that does not use duality, like we have for the zero cell in \cref{s.zerocell}?

\begin{conj}[\textsc{Boundary hole probabilities}]
Regarding \cref{prop.holeprob}, we believe that $\mathbb{P}[B_u(\origin) \subset \mathcal{C}_d]\ue^{(d-1)u}$ is decreasing in $u$. In fact, for odd $d$, we believe that $\mathbb{P}[B_u(\origin) \subset \mathcal{C}_d]$ equals $\ue^{-(d-1)u}/p(\ue^{2u})$, where $p$ is a polynomial of degree $(d-1)/2$ with positive, rational coefficients. For even $d$, we believe that $\mathbb{P}[B_u(\origin) \subset \mathcal{C}_d]$ equals $\ue^{-(d-1)u}/f(u)$, where $f$ is an infinite power series with positive coefficients that are rational linear combinations of $1$ and $1/\pi$. 
%pub%(I think I have a proof of this for odd $d$.) 
The median distance from $\origin$ to $\partial \cells_d$ is likely less than the mean for every $d$, because the density of the distance is likely decreasing. 
%pub%I think it is not hard to prove that the tail probability is asymptotic to $u \mapsto a_d \ue^{-2(d-1)u}$ as $u \to\infty$, where $a_d \coloneqq 2^{2-d} \sqrt{\pi}\, \Gamma(2d-2)/\bigl(\Gamma(d/2)\Gamma\bigl(3(d-1)/2\bigr)\bigr)$, and $a_d \sim \bigl(\frac8{3\sqrt3}\bigr)^{d-1}$ as $d \to\infty$.
\end{conj}

%pub%Are the mean $f$-vectors log-concave? or at least unimodal?

\begin{question}[\textsc{Typical values and the typical cell}]
We noted that $\R^{d-1}$-typical values for the $\mob_d$-typical ideal cell $\scell_d$ match $\mob_d$-typical values for $\cells_d$ in \cref{r.f-vector,r.facet-inten,r.Del-vols,r.sep-from-infty}. Is there a general theorem to this effect? 
\end{question}

\begin{question}[\textsc{Variance of the volume of the typical cell}]
Let $V_d(\lambda)$ be the variance of the volume of the typical cell of $\cellsl_d$. Does $\lambda V_d(\lambda)$ have a finite limit as $\lambda \to 0$? In fact, is the limit equal to the expected volume of $\ocell_d \setminus \scell_d(R_1)$ in their natural coupling? If so, what is this value? This is plausible because $\lambda V_d(\lambda)$ is the difference between the expected volumes of the size-biased typical cell (i.e., the zero cell) and the typical cell.
\end{question}

\begin{question}[\textsc{Indistinguishability}]
All cells of $\cells_d$ share the same asymptotics, which are those of the typical cell. Are they, in fact, indistinguishable? This would mean that for every measurable set $A$ of pairs $(C, V)$, where $C$ is a cell of a tessellation $V$ of $\HH_d$, if $A$ is invariant under the diagonal action of $\mob_d$, then a.s., for all cells $C \in \cells_d$, we have $(C, \cells_d) \in A$ or, for all cells $C \in \cells_d$, we have $(C, \cells_d) \notin A$. Since the IPVT is $\mob_d$-ergodic, this is equivalent to $\PP\bigl[(\ocell_d, \cells_d) \in A\bigr] \in \{0, 1\}$. Because of \cref{t.factor}, indistinguishability is equivalent to the nonexistence of a proper factor of the PPP $\XX$ on the corona, in other words, the nonexistence of an equivariant map $f$ on discrete subsets of $\corona$ such that $\varnothing \ne f(\XX) \subsetneq \XX$ a.s. The concept of indistinguishability is important in percolation theory \cite{LySch} and in measured group theory \cite{GaLy}.
\end{question}

Update: Sam Mellick \cite{mellick2024indist} has proved indistinguishability after seeing our question. % (personal communication, Jan.\ 11, 2024).}

\begin{question}[\textsc{Exceptional rays}]
Consider rays in the 1-skeleton of $\cells_2$, i.e., in the 3-regular tree embedded in $\HH_2$. Those rays that eventually stay in the boundary of some cell satisfy common strong laws of large numbers due to the ergodicity of the typical cell, $\scell_2$. What about other rays? For example, if $s(v)$ denotes the separation of a vertex $v$ from its closest ideal nucleus and if $(v_n \sut n \ge 1)$ is a ray of vertices that is eventually in the boundary of some cell, then $\lim_{n \to\infty} n^{-1} \sum_{k=1}^n s(v_k) = 3$ a.s.\ by \cref{p.typ-sep} and the ergodic theorem.  Those that do not satisfy this property are exceptional. By ergodicity of $\cells_2$, the existence of exceptional rays has probability 0 or 1 and the set of exceptional rays has an a.s.\ constant Hausdorff dimension (see, e.g., \cite[Section 1.8]{LP:book} for Hausdorff dimension in trees). What is that constant?
\end{question}

}
\medskip

\begin{center}
\rule{.5\textwidth}{.5pt}
\end{center}

\begin{quote}

May we not call them the Ghosts of departed Quantities?

\smallskip
--- \textit{The Analyst}, George Berkeley
\end{quote}

\bibliographystyle{alpha}

\bibliography{biblio}

\newcommand{\etalchar}[1]{$^{#1}$}
\begin{thebibliography}{GKT22b}

\bibitem[BB{\etalchar{+}}10]{baccelli2010stochastic}
Fran{\c{c}}ois Baccelli, Bart{\l}omiej B{\l}aszczyszyn, et~al.
\newblock Stochastic geometry and wireless networks: Volume {II}
  {A}pplications.
\newblock {\em Foundations and Trends{\textregistered} in Networking},
  4(1--2):1--312, 2010.

\bibitem[BBK05]{bb6}
F.~Baccelli, B.~B\l{}aszczyszyn, and M.K. Karray.
\newblock Blocking rates in large {CDMA} networks via a spatial {E}rlang
  formula.
\newblock In {\em Proceedings IEEE 24th Annual Joint Conference of the IEEE
  Computer and Communications Societies.}, volume~1, pages 58--67, 2005.

\bibitem[BCP22]{BudzinskiCurienPetri}
Thomas Budzinski, Nicolas Curien, and Bram Petri.
\newblock On {C}heeger constants of hyperbolic surfaces.
\newblock {\em arXiv preprint arXiv:2207.00469}, 2022.

\bibitem[BH13]{BridsonHaefliger}
Martin~R. Bridson and Andr{\'e} Haefliger.
\newblock {\em Metric Spaces of Non-Positive Curvature}, volume 319.
\newblock Springer Science \& Business Media, 2013.

\bibitem[Bhu19]{bhupatiraju}
Sandeep Bhupatiraju.
\newblock {\em The Low-Intensity Limit of {B}ernoulli-{V}oronoi and
  {P}oisson-{V}oronoi measures.}
\newblock PhD thesis, Indiana University, Bloomington, 2019.

\bibitem[Bow94]{Bow}
Brian~H. Bowditch.
\newblock Some results on the geometry of convex hulls in manifolds of pinched
  negative curvature.
\newblock {\em Comment. Math. Helv.}, 69(1):49--81, 1994.

\bibitem[CCE18]{CCEpreprint}
Pierre Calka, Aur\'elie Chapron, and Nathana\"el Enriquez.
\newblock Mean asymptotics for a {P}oisson-{V}oronoi cell on a {R}iemannian
  manifold, 2018.
\newblock Available at \url{https://arxiv.org/abs/1807.09043}.

\bibitem[CCE21]{CCE}
Pierre Calka, Aur\'{e}lie Chapron, and Nathana\"{e}l Enriquez.
\newblock Poisson-{V}oronoi tessellation on a {R}iemannian manifold.
\newblock {\em Int. Math. Res. Not. IMRN}, 2021(7):5413--5459, 2021.

\bibitem[Cha18]{chapron2018}
Aur\'{e}lie Chapron.
\newblock A local {B}laschke-{P}etkantschin formula in a {R}iemannian manifold.
\newblock 2018.
\newblock Available at \url{https://arxiv.org/abs/1807.07384}.

\bibitem[FMW23]{FMW}
Miko\l{}aj Fr\c{a}czyk, Sam Mellick, and Amanda Wilkens.
\newblock Poisson-{V}oronoi tessellations and fixed price in higher rank.
\newblock 2023.
\newblock Available at \url{https://arxiv.org/abs/2307.01194}.

\bibitem[GKT22a]{GodKabTha}
Thomas Godland, Zakhar Kabluchko, and Christoph Th\"{a}le.
\newblock Beta-star polytopes and hyperbolic stochastic geometry.
\newblock {\em Adv. Math.}, 404:Paper No. 108382, 69, 2022.

\bibitem[GKT22b]{gusakova2022delaunay}
Anna Gusakova, Zakhar Kabluchko, and Christoph Th{\"a}le.
\newblock The $\beta$-{D}elaunay tessellation: Description of the model and
  geometry of typical cells.
\newblock {\em Advances in Applied Probability}, 54(4):1252--1290, 2022.

\bibitem[GL09]{GaLy}
Damien Gaboriau and Russell Lyons.
\newblock A measurable-group-theoretic solution to von {N}eumann's problem.
\newblock {\em Invent. Math.}, 177(3):533--540, 2009.

\bibitem[Gro81]{Gro81}
Mikhael Gromov.
\newblock Hyperbolic manifolds, groups and actions.
\newblock In {\em Riemann Surfaces and Related Topics: Proceedings of the 1978
  Stony Brook Conference (State Univ. New York, Stony Brook, NY, 1978)},
  volume~97, pages 183--213, 1981.

\bibitem[Her21]{Herold:thesis}
Felix Herold.
\newblock {\em Random Mosaics in Hyperbolic Space}.
\newblock PhD thesis, Karlsruher Institut f\"ur Technologie, 2021.

\bibitem[Iso00a]{isokawa2000H3}
Yukinao Isokawa.
\newblock Poisson-{V}oronoi tessellations in three-dimensional hyperbolic
  spaces.
\newblock {\em Advances in Applied Probability}, 32(3):648--662, 2000.

\bibitem[Iso00b]{isokawa2000H2}
Yukinao Isokawa.
\newblock Some mean characteristics of {P}oisson-{V}oronoi and
  {P}oisson-{D}elaunay tessellations in hyperbolic planes.
\newblock {\em Bulletin of the Faculty of Education, Kagoshima University.
  Natural science}, 52:11--25, 2000.

\bibitem[Kab21]{KabAngles}
Zakhar Kabluchko.
\newblock Angles of random simplices and face numbers of random polytopes.
\newblock {\em Adv. Math.}, 380:Paper No. 107612, 68, 2021.

\bibitem[Kal17]{Kall:RM}
Olav Kallenberg.
\newblock {\em Random Measures, Theory and Applications}, volume~77 of {\em
  Probability Theory and Stochastic Modelling}.
\newblock Springer, Cham, 2017.

\bibitem[Kle20]{klenke2013probability}
Achim Klenke.
\newblock {\em Probability Theory---A Comprehensive Course}.
\newblock Universitext. Springer, Cham, third edition, 2020.

\bibitem[Las10]{Last}
G\"{u}nter Last.
\newblock Stationary random measures on homogeneous spaces.
\newblock {\em J. Theoret. Probab.}, 23(2):478--497, 2010.

\bibitem[Lee18]{leejm}
John~M. Lee.
\newblock {\em Introduction to {R}iemannian Manifolds}, volume 176 of {\em
  Graduate Texts in Mathematics}.
\newblock Springer, Cham, 2018.

\bibitem[LP16]{LP:book}
Russell Lyons and Yuval Peres.
\newblock {\em Probability on Trees and Networks}, volume~42 of {\em Cambridge
  Series in Statistical and Probabilistic Mathematics}.
\newblock Cambridge University Press, New York, 2016.
\newblock Available at \url{https://rdlyons.pages.iu.edu/}.

\bibitem[LS99]{LySch}
Russell Lyons and Oded Schramm.
\newblock Indistinguishability of percolation clusters.
\newblock {\em Ann. Probab.}, 27(4):1809--1836, 1999.

\bibitem[Mel24]{mellick2024indist}
Sam Mellick.
\newblock Indistinguishability of cells for the ideal {P}oisson {V}oronoi
  tessellation, 2024.
\newblock Available at \url{https://arxiv.org/abs/2408.10009}.

\bibitem[M{\o}l94]{Moller1994}
Jesper M{\o}ller.
\newblock {\em Lectures on Random Voronoi Tessellations}.
\newblock Springer New York, New York, NY, 1994.

\bibitem[Nev94]{NevoI}
Amos Nevo.
\newblock Pointwise ergodic theorems for radial averages on simple {L}ie
  groups. {I}.
\newblock {\em Duke Math. J.}, 76(1):113--140, 1994.

\bibitem[NS97]{NevoStein}
Amos Nevo and Elias~M. Stein.
\newblock Analogs of {W}iener's ergodic theorems for semisimple groups. {I}.
\newblock {\em Ann. of Math. (2)}, 145(3):565--595, 1997.

\bibitem[Rat19]{ratcliffe}
John~G. Ratcliffe.
\newblock {\em Foundations of Hyperbolic Manifolds}, volume 149 of {\em
  Graduate Texts in Mathematics}.
\newblock Springer, Cham, third edition, 2019.

\bibitem[Sto16]{Stoll}
Manfred Stoll.
\newblock {\em Harmonic and Subharmonic Function Theory on the Hyperbolic
  Ball}, volume 431 of {\em London Mathematical Society Lecture Note Series}.
\newblock Cambridge University Press, Cambridge, 2016.

\bibitem[SW08]{SchneiderWeil}
Rolf Schneider and Wolfgang Weil.
\newblock {\em Stochastic and Integral Geometry}.
\newblock Probability and its Applications (New York). Springer-Verlag, Berlin,
  2008.

\end{thebibliography}

\end{document}